\newcommand{\res}{%
  \,\raisebox{-.127ex}{\reflectbox{\rotatebox[origin=br]{-90}{$\lnot$}}}\,%
}
\pgfplotsset{compat=newest}
\definecolor{darkred}{rgb}{0.6,0.1,0.1}
\definecolor{darkgreen}{rgb}{0.1,0.6,0.1}
\definecolor{darkblue}{rgb}{0.1,0.1,0.6}
\newmdenv[
roundcorner=10pt,
backgroundcolor=gray!10,
linecolor=gray!10,
tikzsetting={draw=black,line width=3pt,dashed,dash pattern= on 10pt off 3pt},
outerlinewidth=0pt,
innerlinewidth=0pt,
]{figbox}
\newlength\figureheight
\newlength\figurewidth
\newcommand{\iid}{\stackrel{\mathrm{iid}}{\sim}}
\DeclareMathOperator*{\argmax}{\mathrm{argmax}}
\def\dd{\mathrm{d}}
\newcommand{\dist}{\mathrm{dist}}
\def\eps{\varepsilon}
\DeclareMathOperator*{\essinf}{\mathrm{ess}\,\mathrm{inf}}
\DeclareMathOperator*{\esssup}{\mathrm{ess}\,\mathrm{sup}}
\newcommand{\Id}{\mathrm{Id}}
\newcommand{\Lip}{\mathrm{Lip}}
\newcommand{\Vol}{\mathrm{Vol}}
\DeclareMathOperator*{\Glim}{\Gamma\text{-}\lim}
\DeclareMathOperator*{\Gammato}{\stackrel{\Gamma-(\mathrm{d})}{\longrightarrow}}
\DeclareMathOperator*{\weakstarto}{\stackrel{{\mathit{w}}^*}{\rightharpoonup}} 
\DeclareMathOperator*{\TLpto}{\stackrel{\mathit{TL^p}}{\longrightarrow}}
\def\l{\left(}
\def\r{\right)}
\def\la{\left|}
\def\ra{\right|}
\def\lb{\left\{}
\def\rb{\right\}}
\def\rd{\right.}
\def\ls{\left[}
\def\rs{\right]}
\newcommand{\bbN}{\mathbb{N}}
\newcommand{\bbS}{\mathbb{S}}
\newcommand{\bbR}{\mathbb{R}}
\newcommand{\cA}{\mathcal{A}}
\newcommand{\cC}{\mathcal{C}}
\newcommand{\cE}{\mathcal{E}}
\newcommand{\cF}{\mathcal{F}}
\newcommand{\cG}{\mathcal{G}}
\newcommand{\cH}{\mathcal{H}}
\newcommand{\cI}{\mathcal{I}}
\newcommand{\cK}{\mathcal{K}}
\newcommand{\cL}{\mathcal{L}}
\newcommand{\cP}{\mathcal{P}}
\newcommand{\cU}{\mathcal{U}}
\newtheorem{theorem}{Theorem}[section]
\newtheorem{lemma}[theorem]{Lemma}
\newtheorem{proposition}[theorem]{Proposition}
\newtheorem{corollary}[theorem]{Corollary}
\newtheorem{mydef}[theorem]{Definition}
\theoremstyle{remark}
\newtheorem{remark}[theorem]{Remark}
\newcommand{\leqnomode}{\tagsleft@true}
\newcommand{\reqnomode}{\tagsleft@false}
\newcounter{num}
\def\namedlabel#1#2{\begingroup
    #2%
    \def\@currentlabel{#2}%
    \phantomsection\label{#1}\endgroup
}
\title{Large Data Limit for a Phase Transition Model with the $p$-Laplacian on Point Clouds}
\author[1]{Riccardo Cristoferi}
\author[2]{Matthew Thorpe}
\affil[1]{Department of Mathematical Sciences,\protect\\ Carnegie Mellon University,\protect\\ Pittsburgh, PA 15213, USA \vspace{\baselineskip}}
\affil[2]{Department of Applied Mathematics and Theoretical Physics,\protect\\ University of Cambridge,\protect\\ Cambridge, CB3 0WA, UK}
\date{July 2018}
\begin{document}

\maketitle
\newcounter{broj}

\begin{abstract}
The consistency of a nonlocal anisotropic Ginzburg-Landau type functional for data classification and clustering is studied.
The Ginzburg-Landau objective functional combines a double well potential, that favours indicator valued functions, and the $p$-Laplacian, that enforces regularity.
Under appropriate scaling between the two terms minimisers exhibit a phase transition on the order of $\eps=\eps_n$ where $n$ is the number of data points.
We study the large data asymptotics, i.e. as $n\to \infty$, in the regime where $\eps_n\to 0$.
The mathematical tool used to address this question is $\Gamma$-convergence.
It is proved that the discrete model converges to a weighted anisotropic perimeter.
\end{abstract}

\section{Introduction \label{sec:Intro}}

The analysis of big data is one of the most important challenges we currently face.
A typical problem concerns partitioning the data based on some notion of similarity.
When the method makes use of a (usually small) subset of the data for which there are labels then this is known as a \emph{classification problem}.
When the method only uses geometric features, i.e. there are are no a-priori known labels, then this is known as a \emph{clustering problem}.
We refer to both problems as labelling problems.

A popular method to represent the geometry of a given data set is to construct a graph embedded in an ambient space $\bbR^d$.
Typically the labelling task is fulfilled via a minimization procedure.
In the machine learning community, successfully implemented approaches include minimizing graph cuts and total variation (see, for instance, \cite{arona, boykov, breslau, breslau2, breslau3, bres, ekaber, shi00, szlambress, szlambress2, slepcev17}).

Of capital importance for evaluating a labelling method is whether it is \emph{consistent} or not; namely it is desirable that the minimization procedure approaches some limit minimization method when the number of elements of the data set goes to infinity.
Indeed, knowing whether a specific minimization strategy is an approximation of a limit (minimizing) object can help explain properties of the finite data algorithm.
For a consistent methodology properties of the large data limit will be evident when a large, but finite, number of data points is being considered.
In particular, this can also be used to justify, a posteriori, the use of a certain procedure in order to obtain some desired features of the classification.
Furthermore, understanding the large data limits can open up new algorithms. \vspace{0.5\baselineskip}

This paper is part of an ongoing project aimed at justifying analytically the consistency of several models for soft labelling used by practitioners.
Here we consider a generalization of the approach introduced by Bertozzi and Flenner in \cite{bertozzi12} (see also \cite{calatroni17}, for an introduction on this topic see \cite{vanGenCarola}), where a Ginzburg-Landau (or Modica-Mortola, see \cite{modica87,modica77}) type functional is used as the underlining energy to minimize in the context of the soft classification problem.
The functional we consider is a discretisation of the non-local Ginzburg-Landau functional studied by Alberti and Bellettini~\cite{alberti98,alberti98a} with the generalisation that we consider 
non-uniform densities and $\ell^p$ (rather than $\ell^2$) cost on finite differences.
Our goal is to prove the consistency of the model.

There are multiple extensions to the approach we consider here; for instance our Ginzburg-Landau functional is based on the $p$-Laplacian, one can also consider the normalised $p$-Laplacian or the random walk Laplacian~(see~\cite{shi00, ng01}). 
Further open problems concern the extention to multi-phase labelling (see \cite{BertozziFlenner}) and convergence of the associated gradient flows. \vspace{0.5\baselineskip}

The paper is organized as follows: in the following subsection we define the discrete model, and in Subsection~\ref{subsec:Intro:InfinModel} we define the continuum limiting problem.
The main results are given in Section~\ref{sec:MainRes} with the proofs presented in Sections~\ref{sec:ConvGraph} and~\ref{sec:Const}.
In Section~\ref{subsec:Intro:LitRev} we give an overview on the related literature.
In Sections~\ref{subsec:Intro:pEx} and~\ref{subsec:Intro:AnisoEx} we include two examples with the purpose of demonstrating key properties of our functional; in particular how the choice of $p$ effects minimizers of our Ginzburg-Landau functional and an example to
image segmentation.
Section~\ref{sec:Back} contains some prelimimary material we include for the convenience of the reader.
Finally, Section~\ref{sec:ConvNLContinuum} is devoted to the proofs of some technical results that are of interest in their own right, and are later used in the proofs in Section~\ref{sec:ConvGraph}.

\subsection{Finite Data Model \label{subsec:Intro:FinModel}}

In the graph representation of a data set, vertexes are points $X_n:=\{x_i\}_{i=1}^n\subset X$, where $X\subset \bbR^d$ is a connected, bounded and open set, with weighted edges $\{W_{ij}\}_{i,j=1}^n$, where each $W_{ij}\geq0$ is meant to represent similarities between the vertexes $x_i$ and $x_j$, and in some sense encode the geometry.
The larger $W_{ij}$ is the more similar the points $x_i$ and $x_j$ are and "the closer they are on the graph".

Let us consider the problem of partitioning a set of data into two classes.
A partition of the set of points $X_n$ is a map $u:X_n\to\{-1,1\}$, where $-1$ and $1$ represent the two classes. 
This is referred to as \emph{hard} labelling, since $u$ can only assume a finite number of values.
From the computational point of view it is preferable to work with functions whose values range in the whole interval $[-1,1]$, \emph{i.e.}, labellings $u:X_n\to[-1,1]$, thus allowing for a \emph{soft} labelling.
Labels that are close to $1$, or to $-1$, are supposed to be in the same class. 
The model used to obtain the binary classification should then force the labelling to be either $1$ or $-1$ when the number of data points is large.

In order to scale the weights on the edges of the graph we define $W_{ij}$ through a kernel $\eta:\bbR^d\to\bbR$.
More precisely, we define the graph weights by $W_{ij} = \eta_\eps(x_i-x_j) = \frac{1}{\eps^d} \eta((x_i-x_j)/\eps)$ where $\eps$ controls the scale of interactions on the graph; in particular choosing $\eps$ large implies the graph is dense, and choosing $\eps$ small implies the graph is disconnected.
Later assumptions, see Remark~\ref{rem:MainRes:Connected}, imply that we scale $\eps=\eps_n$ such that the graph is eventually connected (with probability one).

We now introduce the discrete functional we are going to study.

\begin{mydef}
For $p\geq 1$ and $n\in\mathbb{N}$ define the functional $\cG_n^{(p)}:L^1(X_n) \to [0,\infty)$ by
\[
\cG_n^{(p)}(u) := \frac{1}{\eps_n n^2} \sum_{i,j=1}^n W_{ij} |u(x_i) - u(x_j)|^p + \frac{1}{\eps_n n} \sum_{i=1}^n V(u(x_i))\,,
\]
where
\begin{equation}\label{eq:wij}
W_{ij} := \eta_{\eps_n}(x_i-x_j) := \frac{1}{\eps_n^d}\eta\l\frac{x_i-x_j}{\eps_n}\r
\end{equation}
and $V$ is a double well potential (see Assumption~(B4)).
\vspace{0\baselineskip}
\end{mydef}

The first term in $\cG_n$ plays the role of penalising oscillations, intuitively one wants a labelling solution such that if $x_i$ and $x_j$ are close on the graph then the labels are also close.
This term, when $p=2$, can also be written as $\frac{1}{\eps_n n} \langle u,L u\rangle_{\mu_n}$ where $L$ is the graph Laplacian.
The second term penalises soft labellings.
In particular, we assume that $V(t)=0$ if and only if $t\in \{\pm 1\}$ and $V(t)>0$ for all $t\neq \pm 1$.
Hence any soft labelling is given a penalty of $\frac{1}{\eps_n n}\sum_{i=1}^n V(u(x_i))$, as $\eps_n\to 0$ this penality blows up unless $u$ takes the values $\pm 1$ almost everywhere.

The function $\eta$ plays the role of a mollifier, and that explains the definition of $\eta_{\eps_n}$.
Moreover, to justify the scaling $\frac{1}{\eps_n}$ we reason as follows: assume $\eta$ has support contained in a ball, we get
\[
|u(x_i) - u(x_j)|^p\sim \eps_n^{p}|\nabla u|^p\,.
\]
So that, dividing by $\eps_n$ will give us the typical form of the singular perturbation used in the gradient theory of phase transitions (see \cite{modica87}), namely
\[
\int_X \frac{1}{\eps_n}V(u)+\eps_n^{p-1}|\nabla u|^p\,.
\]

The consistency of the model is studied using $\Gamma$-convergence (see Section \ref{sec:Gammaconv}), a very important tool introduced by De Giorgi in the 70's to understand the limiting behavior of a sequence of functionals (see \cite{DG}).
This kind of variational convergence gives, almost immediately, convergence of minimizers.


\subsection{Infinite Data Model \label{subsec:Intro:InfinModel}}

In order to define the limiting functional, we first introduce some notation.

\begin{mydef}
Let $\nu\in\bbR^d$. Define $\nu^\perp:=\{ z\in \bbR^d \, : \, z\cdot \nu = 0 \}$.
Moreover, for $x\in\bbR^d$, set
\[
\cC(x,\nu) := \lb C\subset\nu^\perp \,:\, C \text{ is a } (d-1)\text{-dimensional cube centred at } x \rb \,.
\]
For $C\in\cC(x,\nu)$, we denote by $v_1,\dots,v_{d-1}$ its principal directions (where each $v_i$ is a unit vector normal to the $i^\text{th}$ face of $C$),
and we say that a function $u:\bbR^d\to\bbR$ is \emph{$C$-periodic} if $u(y+rv_i) = u(y)$ for all $y\in\bbR^d$, all $r\in\mathbb{N}$ and all $i=1,\dots,d-1$.

Finally, we consider the following space of functions:
\[
\cU(C,\nu) := \lb u:\bbR^d\to [-1,1] \, : \, u \text{ is } C\text{-periodic,} \lim_{y\cdot\nu\to \infty} u(y) = 1, \text{ and } \lim_{y\cdot\nu\to -\infty} u(y) = -1 \rb \,.
\]
\vspace{0\baselineskip}
\end{mydef}

We now define the limiting (continuum) model.

\begin{mydef}
Let $p\geq1$ and $X\subset \bbR^d$ be open and bounded, and $\rho\in L^\infty(X)$ a positive function.
Define the functional $\cG_\infty^{(p)}:L^1(X)\to [0,\infty]$ by
\[
\cG_\infty^{(p)}(u):=
\left\{
\begin{array}{ll}
\displaystyle\int_{\partial^*\{u=1\}} \sigma^{(p)}(x,\nu_u(x)) \rho(x) \, \dd \cH^{d-1}(x) & \text{if } u\in BV(X;\{\pm 1\})\,, \\
&\\
+\infty & \text{else}\,,
\end{array}
\right.
\]
where
\[
\sigma^{(p)}(x,\nu) := \inf\lb \frac{1}{\cH^{d-1}(C)} G^{(p)}(u,\rho(x),T_C) \, : \, C\in \cC(x,\nu)\,, u\in \cU(C,\nu) \rb\,,
\]
and, for $C\in \cC(x,\nu)$, we set $T_C:= \lb z+t\nu \, : \, z\in C, t\in \bbR \rb$.
Finally, for $\lambda\in\bbR$ and $A\subset\bbR^d$ define
\[
G^{(p)}(u,\lambda,A):= \lambda \int_{A} \int_{\bbR^d} \eta(h) |u(z+h) - u(z)|^p \, \dd h \, \dd z + \int_{A} V(u(z)) \, \dd z\,.
\]
Here $\partial^*\{u=1\}$ denotes the reduced boundary of $\{u=1\}$ and $\nu_u(x)$ is the measure theoritic exterior normal to the set $\{u=1\}$ at the point $x\in \partial^*\{u=1\}$ (see Definition \ref{eq:defnormal}).
\vspace{0\baselineskip}
\end{mydef}

Notice that the discrete functional $\cG_n^{(p)}$ is nonlocal while the functional $\cG_\infty^{(p)}$ is local.
The minimization problem defining $\sigma^{(p)}$ is called the \emph{cell problem} and it is common in phase transitions problems (see related works in Section~\ref{subsec:Intro:LitRev}).
Although not explicit, we have at least information on the form of the limiting functional: an anisotropic weighted perimeter.
This shows that minimizers of $\cG_\infty^{(p)}$ are sets $E\subset X$ whose boundary $\partial E$ (or, to be precise, the reduced boundary $\partial^* E$) will likely be in the region where $\rho$ is small and orthogonal to directions $\nu$ for which $\sigma^{(p)}(x,\nu)$ is small. \vspace{0.5\baselineskip}

Finally, we want to point out that one of the main issues we have to deal with is that, for each $n\in\bbN$, the data set $X_n$ is a discrete set, while in the limit the data is given by a probability measure $\mu$ on the set $X$, hence why we call $\cG_\infty^{(p)}$ the continuum model.
Thus, we will need to compare functions (the labeling) defined on different sets. To do so we will implement the strategy introduced by Garc\'{i}a Trillos and Slep\v{c}ev in \cite{garciatrillos16}, that consists in extending a function $u:X_n\to\bbR$ to a function $v:X\to\bbR$ in an optimal piecewise constant way. Optimal here is meant in the sense of optimal transportation. 
In particular, a sequence of functions $\{u_n\}_{n=1}^\infty$ with $u_n\in L^1(X_n)$, is said to converge in the $TL^1$ topology to a function $u\in L^1(X)$ if there exists a sequence $\{T_n\}_{n=1}^\infty\subset L^1(X;X_n)$ converging to the identity map in $L^1(X)$ and with
\[ \mu(T^{-1}_n(B)) = \frac{1}{n} \# \lb x_i\in B \, : \, i=1,2,\dots, n\rb \]
for every Borel set $B\subset X$, such that $u_n\circ T_n\to u$ in $L^1(X)$.
We review the $TL^1$ topology in more detail in Section~\ref{subsec:Prelim:Trans}.


\subsection{Main Results \label{sec:MainRes}}

This section is devoted to the precise statements of the main results of this paper.

Let $X\subset \bbR^d$ be a bounded, connected and open set with Lipschitz boundary. 
Fix $\mu\in \cP(X)$ and assume the following.
\begin{itemize}
\item[(A1)] $\mu \ll \cL^d$, has a continuous density $\rho:X\rightarrow[c_1,c_2]$ for some $0<c_1\leq c_2<\infty$.
\end{itemize}

We extend $\rho$ to a function defined in the whole space $\bbR^d$ by setting $\rho(x):=0$ for $x\in\bbR^d\setminus X$.
For all $n\in\mathbb{N}$, consider a point cloud $X_n=\{x_i\}_{i=1}^n\subset X$ and let $\mu_n$ be the associated empirical measure (see Definition \ref{def:empmeas}).
Let $\{\eps_n\}_{n=1}^\infty$ be a positive sequence converging to zero and such that the following rate of convergence holds:
\begin{itemize}
\item[(A2)] $\displaystyle \frac{\dist_\infty(\mu_n,\mu)}{\eps_n} \to 0\,,$ where $\dist_\infty(\mu_n,\mu)$ is the $\infty$-Wasserstein distance between the measures $\mu_n$ and $\mu$, see Definition~\ref{def:Back:Trans:Wass}.
\end{itemize}

\begin{remark}
\label{rem:MainRes:Connected}
When $x_i\iid \mu$ then (with probability one), Assumption~(A2) is implied by $\eps_n \gg \delta_n$,
where $\delta_n$ is defined in Theorem \ref{thm:optrate}.
Notice that for $d\geq 3$ this lower bound on $\eps_n$ ensures that the graph with vertices $x_n$ and edges weighted by $W_{ij}$ (see \eqref{eq:wij}) is eventually connected (see \cite[Theorem 13.2]{penrose}).
The lower bound can potentially be improved when $x_i$ are not independent.
For example if $\{x_i\}_{i=1}^n$ form a regular graph then $\mu_n$ converges to the uniform measure and the lower bound is given by
$\eps_n \gg n^{-\frac{1}{d}}$. \vspace{0.8\baselineskip}
\end{remark}

The double well potential $V:\bbR\to\bbR$ satisfies the following.

\begin{itemize}
\item[(B1)] $V$ is continuous. 
\item[(B2)] $V^{-1}\{0\} = \{\pm 1\}$ and $V\geq0$.
\item[(B3)] There exists $\tau>0, R_V>1$ such that for all $|s|\geq R_V$ that $V(s)\geq \tau|s|$.
\item[(B4)] $V$ is Lipschitz continuous on $[-1,1]$.
\end{itemize}

The assumptions on $V$ imply that in the limit there are only two phases $\pm 1$.
Assumption (B3) is used to establish compactness, in particular it is used to show that minimisers can be bounded in $L^\infty$ by~1. The prototypical example of a function $V:\bbR^d\to\bbR$ satisfying (B1-4) is given by $V(s):=(s^2-1)^2$.

Recall that the graph weights are defined by $W_{ij} = \eta_{\eps_n}(x_i-x_j)$.
We assume that $\eta:\bbR^d\to[0,\infty)$ is a measurable function satisfying the following.

\begin{itemize}
\item[(C1)] $\eta\geq0$, $\eta(0)>0$ and $\eta$ is continuous at $x=0$.
\item[(C2)] $\eta$ is an even function, i.e. $\eta(-x) = \eta(x)$.
\item[(C3)] $\eta$ has support in $B(0,R_\eta)$, for some $R_\eta>0$.
\item[(C4)] For all $\delta>0$ there exists $c_\delta,\alpha_\delta$ such that if $|x-z|\leq \delta$ then $\eta(x) \geq c_\delta \eta(\alpha_\delta z)$, furthermore $c_\delta\to 1$, $\alpha_\delta\to 1$ as $\delta\to 0$.
\end{itemize}

\begin{remark}\label{rem:eta}
Note that (C3) and (C4) imply that $\|\eta\|_{L^\infty}<\infty$ and, in particular, $\int_{\bbR^d} \eta(x) |x| \, \dd x < \infty$.
Indeed, given $\delta>0$, it is possible to cover $B(0,R_\eta)$ with a finite family
$\widetilde{B}_\delta(x_1),\dots,\widetilde{B}_\delta(x_r)$ of sets of the form
\[
\widetilde{B}_\delta(x_i):=\{\, \alpha_\delta z \,:\, |z-x_i|<\delta \,\}\,.
\]
\vspace{0\baselineskip}
\end{remark}

Assumption~(C2) is justified by the fact that $\eta$ plays the role of an interaction potential.
Finally, Assumption~(C4) is a version of continuity of $\eta$ we need in order to perform our technical computations.
We note that (C4) is general enough to include $\eta(x) = \chi_{A}$ where $A\subset \bbR^d$ is open, bounded, convex and $0\in A$, see~\cite[Proposition 2.2]{thorpe17AAA}. 

The main result of the paper is the following theorem.

\begin{theorem}
\label{thm:MainRes:Compact&Gamma}
Let $p\geq 1$ and assume (A1-2), (B1-4) and (C1-4) are in force.
Then, the following holds:
\begin{itemize}
\item (compactness) let $u_n\in L^1(\mu_n)$ satisfy $\sup_{n\in \bbN} \cG_n^{(p)}(u_n) < \infty$, then $u_n$ is relatively compact in $TL^1$ and each cluster point $u$ has $\cG_\infty^{(p)}(u)<\infty$;
\item ($\Gamma$-convergence) $\Glim_{n\to \infty}(TL^1) \cG_n^{(p)} = \cG_\infty^{(p)}$.
\end{itemize}
\vspace{0\baselineskip}
\end{theorem}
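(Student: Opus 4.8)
The plan is to prove both statements in two stages, following the transportation strategy of Garc\'{i}a Trillos and Slep\v{c}ev. First I introduce the intermediate \emph{nonlocal continuum} functional $\cE_{\eps}:L^1(X)\to[0,\infty)$ obtained by replacing the empirical measure $\mu_n$ with $\mu$,
\[
\cE_{\eps}(u):=\frac{1}{\eps}\int_X\!\int_X \eta_{\eps}(x-y)\,|u(x)-u(y)|^p\,\rho(x)\rho(y)\,\dd y\,\dd x+\frac{1}{\eps}\int_X V(u(x))\,\rho(x)\,\dd x\,.
\]
Stage one (Section~\ref{sec:ConvNLContinuum}) is the purely continuum statement: sequences with $\sup_\eps\cE_\eps(u_\eps)<\infty$ are precompact in $L^1(X)$ with every cluster point in $BV(X;\{\pm1\})$, and $\cE_\eps$ $\Gamma$-converges to $\cG_\infty^{(p)}$ in $L^1(X)$ as $\eps\to0$. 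Stage two (Section~\ref{sec:ConvGraph}) transfers this to $\cG_n^{(p)}$ through the $TL^1$ framework, using Assumption~(A2) to absorb the transportation error.

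For the compactness and the liminf inequality in stage one I would argue by localization and blow-up. The double-well term forces cluster points to take values in $\{\pm1\}$; the interaction term controls a nonlocal total variation whose boundedness yields $L^1$-precompactness and membership in $BV$. Given $u_\eps\to u$ in $L^1$ with bounded energy, I view the energies as measures on $X$ and, at $\cH^{d-1}$-a.e.\ point $x_0\in\partial^*\{u=1\}$, rescale at the interaction scale $\eps$. The rescaled profiles become admissible competitors in the cell problem defining $\sigma^{(p)}(x_0,\nu_u(x_0))$, so lower semicontinuity and the definition of $\sigma^{(p)}$ deliver the surface density $\sigma^{(p)}(x_0,\nu_u(x_0))\rho(x_0)$; here the density appears both as the outer weight and, through $\rho(x)\rho(y)\to\rho(x_0)^2$ in the interaction, as the parameter $\lambda=\rho(x_0)$ frozen inside $G^{(p)}$. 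For the limsup inequality I reduce by density to sets $E$ with smooth boundary, take a near-optimal profile from the cell problem, glue copies of it in a boundary layer of width $O(\eps)$ around $\partial E$, and estimate via the coarea formula together with continuity of $\rho$.

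To pass from $\cE_{\eps_n}$ to $\cG_n^{(p)}$ I use the transportation maps $T_n$ that realize $TL^1$ convergence: they push $\mu$ forward to $\mu_n$ and may be chosen with $\|T_n-\Id\|_{L^\infty}\le\dist_\infty(\mu_n,\mu)$. If $u_n\to u$ in $TL^1$ then the interpolants $v_n:=u_n\circ T_n$ converge to $u$ in $L^1(X)$, and $\cG_n^{(p)}(u_n)$ is exactly $\cE_{\eps_n}(v_n)$ with the kernel argument $x-y$ replaced by $T_n(x)-T_n(y)$. Assumption~(A2) is decisive here: since $|T_n(x)-x|\le\dist_\infty(\mu_n,\mu)=o(\eps_n)$, the kernel arguments are displaced by a fraction of $\eps_n$ that vanishes, so the near-continuity Assumption~(C4) lets me sandwich $\eta_{\eps_n}(T_n(x)-T_n(y))$ between multiples $c_\delta\,\eta_{\eps_n}(\alpha_\delta(x-y))$ with $c_\delta,\alpha_\delta\to1$. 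Hence $\cG_n^{(p)}(u_n)$ and $\cE_{\eps_n}(v_n)$ are asymptotically equal; precompactness in $TL^1$ follows from $L^1$-precompactness of the $v_n$, and both halves of the $\Gamma$-convergence descend from stage one.

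I expect the main obstacle to be the liminf inequality of stage one, and specifically the blow-up at the reduced boundary. The nonlocal interaction does not localize cleanly, since interactions cross $\partial^*\{u=1\}$ at scale $\eps$; the anisotropy and the $\ell^p$ cost make the cell problem genuinely multi-dimensional, so one cannot reduce to a one-dimensional profile a priori; and the variable density must be frozen at $x_0$ with an error controlled only through continuity of $\rho$. Producing exactly the optimal constant $\sigma^{(p)}(x_0,\nu_u(x_0))\rho(x_0)$, rather than a non-sharp lower bound, is where the admissibility of the blown-up profiles in $\cU(C,\nu)$ and the precise form of $G^{(p)}$ must be exploited.
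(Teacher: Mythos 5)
Your overall architecture coincides with the paper's: an intermediate nonlocal continuum functional, a Fonseca--M\"{u}ller blow-up argument at $\partial^*\{u=1\}$ for the continuum liminf, a gluing of near-optimal cell-problem profiles over a polyhedral interface (plus density and Reshetnyak-type upper semicontinuity) for the continuum limsup, and a transport-map sandwich via (A2) and (C4) to pass between the graph and the continuum. The compactness statement, the continuum stage, and the liminf half of stage two are all sound in outline.

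The genuine gap is in the limsup half of stage two, which you dispose of with ``both halves of the $\Gamma$-convergence descend from stage one.'' Your sandwich is an asymptotic identity between $\cG_n^{(p)}(u_n)$ and $\cE_{\eps_n}(v_n)$ for $v_n=u_n\circ T_n$, i.e.\ for functions that are \emph{constant on the transport cells} $T_n^{-1}(x_i)$. For the limsup you must go the other way: given $u$, take a continuum recovery sequence $w_n$ for $\cE_{\eps_n}$ and produce a discrete $u_n$ (say $u_n(x_i):=n\int_{T_n^{-1}(x_i)}w_n(x)\,\dd x$) whose interpolant $v_n=u_n\circ T_n$ satisfies $\cE_{\eps_n}(v_n)\leq \cE_{\eps_n}(w_n)+o(1)$. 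This is not automatic: both terms of $\cE_{\eps_n}$ carry a prefactor $1/\eps_n$, so one needs the quantitative estimate $\|v_n-w_n\|_{L^1}=o(\eps_n)$, and an arbitrary $L^1$ recovery sequence for the continuum functional may oscillate at scales far below the point-cloud resolution, in which case sampling it on $X_n$ destroys the energy bound. The fix --- and what the paper does --- is to \emph{strengthen the continuum limsup}: for polyhedral $u$ the recovery sequence can be chosen Lipschitz with $\Lip(w_n)\leq \frac{1}{\zeta_n\eps_n}$ and equal to $u$ outside an $\frac{\eps_n}{\zeta_n}$-neighbourhood of $\partial^*\{u=1\}$, for any $\zeta_n\to 0$. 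Choosing $\zeta_n\gg\sqrt{\|T_n-\Id\|_{L^\infty}/\eps_n}$ (possible by (A2)) then gives $\|v_n-w_n\|_{L^1}=O\bigl(\|T_n-\Id\|_{L^\infty}/\zeta_n^2\bigr)=o(\eps_n)$, since only the $O(n\eps_n/\zeta_n)$ cells near the interface contribute and the oscillation of $w_n$ on each is $O\bigl(\|T_n-\Id\|_{L^\infty}/(\zeta_n\eps_n)\bigr)$. With this in hand one controls the potential discrepancy $\frac{1}{\eps_n}\int_X|V(v_n)-V(w_n)|\rho\,\dd x$ via Assumption (B4) (the Lipschitz bound on $V$, which your proposal never invokes but which is exactly why it is a hypothesis of the theorem), and the interaction discrepancy via $|a|^p\leq(1+\delta)|b|^p+C_\delta|a-b|^p$; one also needs the reversed form of the kernel sandwich, an upper bound $\eta\bigl((T_n(x)-T_n(z))/\eps_n\bigr)\leq\hat c_n\,\eta\bigl(\hat\alpha_n(x-z)/\eps_n\bigr)$, which requires a separate (if elementary) inversion of (C4). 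Without the strengthened continuum recovery sequence and the $o(\eps_n)$ discretization estimate, your stage two yields only compactness and the liminf inequality, not the full $\Gamma$-limit.
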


The proof of compactness is in Section~\ref{subsec:ConvGraph:Compact} and the $\Gamma$-convergence is proved in Sections~\ref{subsec:ConvGraph:Liminf} and~\ref{subsec:ConvGraph:Limsup}.

Since the proof of Theorem \ref{thm:MainRes:Compact&Gamma} is quite long, we briefly sketch here the main idea behind the $\Gamma$-convergence result.
We approximately follow the method of~\cite{garciatrillos16} where the authors considered the continuum limit of total variation on point clouds.
We will show the convergence of the discrete nonlocal functional $\cG^{(p)}_n$ to the continuum local one $\cG_\infty^{(p)}$ via an intermediate nonlocal continuum functional $\cF^{(p)}_{\eps_n}$ (defined in~\eqref{eq:ConvNLContinuum:Feps}).
In particular, we will prove that:
\begin{itemize}
\item[(i)] the functionals $\cF^{(p)}_{\eps_n}$ $\Gamma$-converge in $L^1(X)$ to $\cG_\infty^{(p)}$, see Section~\ref{sec:ConvNLContinuum}, where we implement a strategy similar to the one of \cite{alberti98}, where the authors considered the functional $\cF^{(p)}_{\eps_n}$ with $\rho\equiv1$ and $p=2$,
\item[(ii)] it is possible to bound from below $\cG^{(p)}_n$ with $\cF^{(p)}_{\eps'_n}$ (see \eqref{eq:ineq2}), where $\lim_{n\to \infty}\frac{\eps^\prime_n}{\eps_n}=1$, from which the liminf inequality follows,
\item[(iii)] if $u\in BV(X;\{\pm 1\})$ we use the same recovery sequence $u_{\eps}$ as in~\cite{alberti98} to show $\limsup_{\eps\to 0}\cF_{\eps}^{(p)}(u_\eps) \leq \cG_\infty^{(p)}$, after which we can get an upper bound of $\cG^{(p)}_n(u_n)$ in terms of $\cF^{(p)}_{\eps'_n}(u_{\eps'_n})$ where $u_n:X_n\to[0,\infty)$ is defined at each $x_i\in X_n$ as a suitable average of $u_{\eps'_n}$ around the point $x_i$ and $\lim_{n\to\infty}\frac{\eps'_n}{\eps_n}=1$. This will give us the limsup inequality.
\end{itemize}
Similarly, the compactness property follows by comparing $\cG_n^{(p)}$ with the intermediary functional $\cF_{\eps_n}^{(p)}$.

As an application of the Theorem \ref{thm:MainRes:Compact&Gamma}, we consider the functional $\cG^{(p)}_n$ with a data fidelity term.

\begin{mydef}\label{def:fidelityterm}
Let $k_n:X_n\times \bbR \to\bbR$ and $k_\infty:X\times \bbR\to\bbR$.
Define the functionals $\cK_n: TL^1(X_n)\to\bbR$ and $\cK_\infty: TL^1(X)\to\bbR$ by
\[
\cK_n(u,\nu) = \lb \begin{array}{ll} \frac{1}{n} \sum_{i=1}^n k_n(x_i,u(x_i)) & \text{if } \nu=\mu_n, \\ +\infty & \text{else,} \end{array} \rd
\]
and
\[
\cK_\infty(u,\nu) = \lb \begin{array}{ll} \int_X k_\infty(x,u(x)) \rho(x) \, \dd x & \text{if } \nu=\mu, \\ +\infty & \text{else} \end{array} \rd
\]
respectively. \vspace{0.5\baselineskip}
\end{mydef}

We make the following assumptions on $k_n, k_\infty$:
\begin{itemize}
\item[(D1)] $k_n\geq 0$, $k_\infty\geq0$.
\item[(D2)] There exist $\beta>0$ and $q\geq 1$ such that $k_n(x,u) \leq \beta(1+|u|^q)$, for all $n\in\mathbb{N}$ and almost all $x\in X_n$.
\item[(D3)] For almost every $x\in X$ the following holds: let $u_n\to u$ be a converging real valued sequence and $x_n\to x$, then
\[
\lim_{n\to \infty} k_n \l x_n,u_n \r = k_\infty(x,u)\,.
\]
\end{itemize}

\begin{remark}
\label{rem:Intro:MainRes:SLL}
For example, we can use this form of $\cK_n,\cK_\infty$ to include a data fidelity term in a specific subset of $X$.
Let $B\subset X$ be an open set with $\Vol(B)>0$ and $\Vol(\partial B) = 0$.
Let $\lambda_n\geq0$ with $\lambda_n\to\lambda$ as $n\to\infty$.
Let $y_n\in L^1(X_n)$ and $y_\infty\in L^1(X)$ with $\sup_{n\in \bbN}\|y_n\|_{L^\infty}<\infty$ and such that $y_n(x_{i_n})\to y_\infty(x)$ for almost every $x\in X$ and any sequence $x_{i_n}\to x$.
Define
\[
k_n(x,u) :=
\left\{
\begin{array}{ll}
\lambda_n |y_n(x) - u|^q & \text{ in } B\cap X_n\,, \\
0 & \text{ on } X_n\setminus B\,,
\end{array}
\right.
\]
\[
k_\infty(x,u):=
\left\{
\begin{array}{ll}
\lambda |y_\infty(x) - u|^q & \text{ in } B\,, \\
0 & \text{ on } X\setminus B\,.
\end{array}
\right.
\]
Then $k_n$ and $k_\infty$ satisfy Assumptions~(D1-3). Indeed, (D1) follows directly from the definition of the fidelity terms, while (D3) holds thanks to continuity and the fact that $\Vol(\partial B) = 0$. Finally, in order to prove (D2) we simply notice that
\[
k_n(x,u) = \lambda_n| y_n(x) - u|^q \leq \sup_{n\in \bbN} \lambda_n 2^{q-1} \l \|y_n\|^q_{L^\infty} + |u|^q \r\leq \beta (1+|u|^q)\,.
\]
for some $\beta>0$. \vspace{0.8\baselineskip}
\end{remark}

We now consider the minimisation problem
\[
\text{minimise} \quad \cG_n^{(p)}(u) + \cK_n(u) \quad \text{over } u \in L^1(X_n)\,.
\]

\begin{corollary}
\label{cor:MainRes:Constrained}
In addition to Assumptions (A1-3), (B1-2), (C1-4), (D1-3), assume that for the same $q\geq 1$ as in Assumption (D2) there exists $\tau,R_V>0$ such that for all $|s|\geq R_V$ that $V(s) \geq \tau|s|^q$.
Then any sequence of almost minimizers of $\cG_n^{(p)}+\cK_n$ is compact in $TL^1$.
And furthermore, any cluster point of almost minimizers is a minimizer of $\cG_\infty^{(p)}+\cK_\infty$ in $L^1(X)$. \vspace{0.5\baselineskip}
\end{corollary}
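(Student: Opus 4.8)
The plan is to derive the corollary from Theorem~\ref{thm:MainRes:Compact&Gamma} by the standard $\Gamma$-convergence route, treating $\cK_n$ as a continuously converging perturbation of $\cK_\infty$. Everything reduces to the following claim, which I single out as the technical heart: \emph{if $u_n\in L^1(X_n)$ satisfies $\sup_n\cG_n^{(p)}(u_n)\leq M<\infty$ and $u_n\to u$ in $TL^1$, then $\cK_n(u_n)\to\cK_\infty(u)$.} Note that the compactness and liminf/limsup statements I need are exactly those already proved in Theorem~\ref{thm:MainRes:Compact&Gamma}; moreover its hypothesis (B3) follows from the strengthened well condition since $|s|^q\geq|s|$ for $|s|\geq1$.

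First I would prove the claim. Using the transport maps $T_n$ from the definition of $TL^1$ convergence, so that $(T_n)_\#\mu=\mu_n$, $T_n\to\Id$ and $u_n\circ T_n\to u$ in $L^1(X)$, a change of variables gives
\[
\cK_n(u_n)=\int_X k_n\bigl(T_n(y),(u_n\circ T_n)(y)\bigr)\rho(y)\,\dd y\,.
\]
Passing to a subsequence along which $T_n(y)\to y$ and $(u_n\circ T_n)(y)\to u(y)$ for a.e.\ $y$, Assumption~(D3) yields pointwise convergence of the integrand to $k_\infty(y,u(y))$. The passage to the limit under the integral is where the condition $V(s)\geq\tau|s|^q$ enters: from $\frac{1}{n}\sum_i V(u_n(x_i))\leq M\eps_n\to0$ and this growth bound one obtains, after transporting to $X$ and using $\rho\geq c_1$,
\[
\int_{\{|u_n\circ T_n|\geq R_V\}}|u_n\circ T_n|^q\,\dd y\leq \frac{M\eps_n}{c_1\tau}\to0\,,
\]
which together with the elementary bound on $\{|u_n\circ T_n|<R_V\}$ shows that $\{|u_n\circ T_n|^q\}_n$ is equi-integrable. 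By Assumption~(D2) the integrands are dominated by the equi-integrable family $\beta(1+|u_n\circ T_n|^q)\rho$, so Vitali's theorem gives $\cK_n(u_n)\to\cK_\infty(u)$; since the limit is independent of the subsequence, the full sequence converges, proving the claim.

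With the claim in hand I would run the convergence-of-minimizers argument. For compactness, let $u_n$ be almost minimizers, $(\cG_n^{(p)}+\cK_n)(u_n)\leq\inf(\cG_n^{(p)}+\cK_n)+\theta_n$ with $\theta_n\to0$. Testing against the constant labelling $\equiv1$, whose double-well term vanishes and whose fidelity term is at most $2\beta$ by~(D2), gives $\inf(\cG_n^{(p)}+\cK_n)\leq2\beta$, so $\sup_n\cG_n^{(p)}(u_n)<\infty$; the compactness part of Theorem~\ref{thm:MainRes:Compact&Gamma} then yields relative compactness in $TL^1$ with each cluster point in $BV(X;\{\pm1\})$. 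Let $u$ be a cluster point, $u_{n_k}\to u$. Combining the $\Gamma$-liminf inequality $\cG_\infty^{(p)}(u)\leq\liminf_k\cG_{n_k}^{(p)}(u_{n_k})$ with the claim $\cK_{n_k}(u_{n_k})\to\cK_\infty(u)$ gives $(\cG_\infty^{(p)}+\cK_\infty)(u)\leq\liminf_k(\cG_{n_k}^{(p)}+\cK_{n_k})(u_{n_k})$. For the matching upper bound, for any competitor $v\in BV(X;\{\pm1\})$ I take the recovery sequence $v_n\to v$ from the $\Gamma$-limsup part of Theorem~\ref{thm:MainRes:Compact&Gamma}; it has bounded energy, so the claim gives $\cK_n(v_n)\to\cK_\infty(v)$ and hence $\limsup_n\inf(\cG_n^{(p)}+\cK_n)\leq(\cG_\infty^{(p)}+\cK_\infty)(v)$. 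Chaining these through almost minimality yields $(\cG_\infty^{(p)}+\cK_\infty)(u)\leq(\cG_\infty^{(p)}+\cK_\infty)(v)$ for every $v$ (the case $v\notin BV(X;\{\pm1\})$ being trivial), so $u$ minimizes $\cG_\infty^{(p)}+\cK_\infty$.

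The hard part is the continuous-convergence claim: upgrading the essentially $L^1$-type $TL^1$ convergence to enough integrability to carry the $q$-growth fidelity term into the limit. This is precisely what the strengthened assumption $V(s)\geq\tau|s|^q$ supplies, by forcing the $q$-th power mass of $u_n$ outside $[-R_V,R_V]$ to vanish and thereby delivering the equi-integrability required by Vitali's theorem; without it the endpoint $L^q$ control fails and the perturbation need not pass to the limit.
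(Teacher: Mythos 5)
Your proof is correct, and it follows the paper's overall strategy: reduce everything to the claim that $\cK_n(u_n)\to\cK_\infty(u)$ whenever $u_n\to u$ in $TL^1$ with $\sup_n\cG_n^{(p)}(u_n)<\infty$, and use the strengthened condition $V(s)\geq\tau|s|^q$ precisely to control the $q$-th power mass of $u_n$ where $|u_n|$ is large, via $\frac{1}{n}\sum_i V(u_n(x_i))\leq \eps_n\,\cG_n^{(p)}(u_n)\to 0$. The difference lies in how that estimate is deployed. The paper truncates: it replaces $u_n$ by $v_n$, equal to $u_n$ on $\{|u_n|^q\leq R_V\}$ and to $1$ elsewhere, shows $|\cK_n(u_n)-\cK_n(v_n)|\to 0$ and $v_n\to u$ in $TL^1$ using that estimate together with (D2), and then passes to the limit in $\cK_n(v_n)$ by the ordinary dominated convergence theorem, the dominating function now being the constant $\beta(1+R_V)$. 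You keep $u_n$ untruncated, use the same estimate to prove uniform integrability of $\{|u_n\circ T_n|^q\}_n$, and invoke Vitali's theorem; both are valid, the paper's route being slightly more elementary (no equi-integrability needed) and yours avoiding the auxiliary sequence. A second difference: the paper concludes by citing the abstract results Theorem~\ref{thm:convmin} and Proposition~\ref{prop:contconv2}, whereas you unfold the convergence-of-minimizers machinery by hand. In doing so you supply a step the paper's write-up leaves implicit: Theorem~\ref{thm:convmin} only says that cluster points of almost minimizers are minimizers, not that cluster points exist, so the compactness assertion of the corollary needs your argument of testing against the constant labelling $\equiv 1$ (zero Ginzburg--Landau energy, fidelity at most $2\beta$ by (D1-2)) to obtain a uniform energy bound, after which the compactness part of Theorem~\ref{thm:MainRes:Compact&Gamma} applies. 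Your observation that (B3) is implied by the strengthened well condition is also a useful clarification of why the corollary can omit it from its hypothesis list.
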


We prove the corollary in Section~\ref{sec:Const}. \vspace{0.5\baselineskip}

Finally, we comment on the hypothesis $\rho\geq c_1>0$.
If it is omitted, we can still get the following result:

\begin{corollary}\label{cor:DegeneratedDensity}
Let $p\geq 1$ and assume (A2), (B1-3) and (C1-4) are in force and that $\rho\in[0, c_2]$, for some $c_2<\infty$.
Set $X_+:=\{ x\in X \,:\, \rho(x)>0 \}$ and define the functional $\widetilde{\cG}_\infty^{(p)}: L^1(X)\to[0,+\infty]$ as
\[
\widetilde{\cG}_\infty^{(p)}(u):=
\left\{
\begin{array}{ll}
\displaystyle\int_{\partial^*\{u=1\}} \sigma^{(p)}(x,\nu_u(x)) \rho(x) \, \dd \cH^{d-1}(x) & \text{if }
    u\in BV_{loc}(X_+;\{\pm 1\})\,, \\
&\\
+\infty & \text{else}\,,
\end{array}
\right.
\]
where $BV_{loc}(X_+;\{\pm 1\})$ denotes the space of functions $u\in L^1(X;\{\pm1\})$ such that $u\in BV(K;\{\pm1\})$ for any compact set $K\subset X_+$.
Then, the following holds:
\begin{itemize}
\item (compactness) for any compact set $K\subset X_0$ we have that any sequence $\{u_n\}_{n=1}^\infty\subset L^1(K;\mu_n)$ satisfying $\sup_{n\in \bbN} \cG_n^{(p)}(u_n) < \infty$ is relatively compact in $TL^1$ and each cluster point $u$ has $\widetilde{\cG}_\infty^{(p)}(u)<\infty$;
\item ($\Gamma$-convergence) $\Glim_{n\to \infty}(TL^1) \cG_n^{(p)} = \widetilde{\cG}_\infty^{(p)}$.
\end{itemize}
\vspace{0\baselineskip}
\end{corollary}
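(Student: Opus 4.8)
The plan is to reduce the statement to Theorem~\ref{thm:MainRes:Compact&Gamma} by localization and exhaustion, since the hypothesis $\rho\geq c_1>0$ is only used to control oscillations uniformly on $X$, and on any set bounded away from $\{\rho=0\}$ the density is automatically bounded below. By continuity of $\rho$ the superlevel sets $U_\delta:=\{x\in X:\rho(x)>\delta\}$ are open and increase to $X_+$ as $\delta\downarrow0$. I would fix smooth, bounded, connected open sets $\Omega_m$ with $\overline{\Omega_m}\subset X_+$ and $\Omega_m\uparrow X_+$; on each such $\Omega_m$ one has $\rho\geq\inf_{\overline{\Omega_m}}\rho>0$, so the hypotheses of Theorem~\ref{thm:MainRes:Compact&Gamma} hold with $X$ replaced by $\Omega_m$.

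For compactness, since every summand of $\cG_n^{(p)}$ is nonnegative the full energy dominates its restriction to points of $X_n$ lying in $\Omega_m$; applying the compactness half of Theorem~\ref{thm:MainRes:Compact&Gamma} on $\Omega_m$ (which uses only (B1--3)) shows that any sequence with $\sup_n\cG_n^{(p)}(u_n)<\infty$ is relatively compact in $TL^1(\Omega_m)$ with cluster points in $BV(\Omega_m;\{\pm1\})$. A diagonal extraction over $m$ produces a subsequence converging in $TL^1_{loc}(X_+)$ to some $u$ lying in $BV(K;\{\pm1\})$ for every compact $K\subset X_+$, i.e. $u\in BV_{loc}(X_+;\{\pm1\})$; the finiteness $\widetilde{\cG}_\infty^{(p)}(u)<\infty$ then follows from the liminf bound established next.

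For the liminf inequality I would again use nonnegativity to discard all contributions coming from outside $\Omega_m$ and invoke the liminf part of Theorem~\ref{thm:MainRes:Compact&Gamma} on $\Omega_m$, obtaining for $u_n\TLpto u$ that
\[
\liminf_{n\to\infty}\cG_n^{(p)}(u_n)\geq\int_{\partial^*\{u=1\}\cap\Omega_m}\sigma^{(p)}(x,\nu_u(x))\,\rho(x)\,\dd\cH^{d-1}(x)\,.
\]
Letting $m\to\infty$ and applying monotone convergence (the integrand is nonnegative) gives the same bound with the integral taken over $\partial^*\{u=1\}\cap X_+$, which equals $\widetilde{\cG}_\infty^{(p)}(u)$ because $\rho\equiv0$ on $X\setminus X_+$. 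If $u\notin BV_{loc}(X_+;\{\pm1\})$ the compactness step already forces $\liminf_n\cG_n^{(p)}(u_n)=+\infty$, in agreement with $\widetilde{\cG}_\infty^{(p)}(u)=+\infty$.

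The main obstacle is the limsup inequality in the degenerate region. For $u\in BV_{loc}(X_+;\{\pm1\})$ with $\widetilde{\cG}_\infty^{(p)}(u)<\infty$, on each $U_\delta$ the density is bounded below and the recovery sequence of Theorem~\ref{thm:MainRes:Compact&Gamma} matches $u$ with asymptotic cost $\int_{\partial^*\{u=1\}\cap U_\delta}\sigma^{(p)}\rho\,\dd\cH^{d-1}$. The difficulty is that $u$ need not be globally $BV$, so the recovery construction cannot be run on all of $X$ at once; I must instead define $u_n$ across the transition layer near $\partial U_\delta$ and throughout $\{\rho\leq\delta\}$ without generating uncontrolled energy. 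What makes this feasible is that $W_{ij}=\eta_{\eps_n}(x_i-x_j)$ couples only points within distance $R_\eta\eps_n\to0$: any spurious energy produced by an arbitrary (e.g. frozen) extension of the labelling across the interface is concentrated in an $O(\eps_n)$-neighbourhood of $\partial U_\delta$, a region where $\rho\leq\delta$, so its contribution is controlled by $\delta$ times a perimeter-type quantity and vanishes upon sending first $n\to\infty$ and then $\delta\to0$. Carrying out this two-parameter diagonal argument — gluing the localized recovery sequences across the moving interface $\partial U_\delta$ while keeping the nonlocal cross terms negligible — is the technical heart of the proof; the remaining bookkeeping (passing to $\eps_n'$ with $\lim_n\eps_n'/\eps_n=1$ as in steps~(ii)--(iii), and checking $TL^1(X)$ convergence of the glued sequence) proceeds as in the non-degenerate case.
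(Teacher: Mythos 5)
Your proposal is correct in strategy and coincides with the paper's own argument: the paper's entire proof of this corollary is the observation that on any compact $K\subset X_+$ the continuity of $\rho$ gives $\min_K\rho\geq c_1>0$, so that Theorem~\ref{thm:MainRes:Compact&Gamma} applies locally, after which the paper explicitly omits all remaining details. Your write-up is if anything more complete than the paper's treatment: the exhaustion by nice subdomains, the diagonal extraction for compactness, the monotone-convergence liminf, and above all the limsup gluing across $\{\rho\leq\delta\}$ (where one must still check that the seam can be chosen smooth with $\cH^{d-1}$-area growing slower than $\delta^{-2}$, since level sets of a merely continuous $\rho$ need not have finite area) are precisely the steps the paper leaves unaddressed.
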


We omit a rigorous proof of the corollary and just explain the details.
Indeed, for fixed a compact set $K\subset X_+$, the continuity of $\rho$ implies that $\min_{K}\rho\geq c_1>0$.
Thus the result of Corollary \ref{cor:DegeneratedDensity} follows by applying Theorem \ref{thm:MainRes:Compact&Gamma}.


\subsection{Related Works \label{subsec:Intro:LitRev}}

The functional $\cG^{(p)}_n$ in the case $p=1$ has been considered by the second author and Theil in \cite{thorpe17AAA}, where a similar $\Gamma$-convergence result has been proved. The difference is that, in the case $p=1$, the limit energy density function $\sigma^{(1)}$ can be given explicitly, via an integral.
In \cite{vangennip12a} van Gennip and Bertozzi studied the Ginzburg-Landau functional on 4-regular graphs for $d=2$ and $p=2$ proving limits for $\eps\to 0$ and $n\to \infty$ (both simultaneously and independently).

The $TL^p$ topology, as introduced by Garc\'{i}a Trillos and Slep\v{c}ev~\cite{garciatrillos16}, provides a notion of convergence upon which the $\Gamma$-convergence framework can be applied.
This method has now been applied in many works, see, for instance, \cite{garciatrillos16,thorpe17AAA,garciatrillos15aAAA,dunlop18,davis16AAA,garciatrillos15c,garciatrillos16bAAA,garciatrillos16aAAA,slepcev17}. 
Further studies on this topology can be found in 
\cite{garciatrillos16,garciatrillos15aAAA,thorpe17bAAA,thorpe17cAAA}.

The literature on phase transitions problems is quite extensive.
Here we just recall some of the main results, starting from the pioneering work \cite{modica77} of Modica and Mortola and of Mortola~\cite{modica87} (see also Sternberg \cite{sternberg88}), where the scalar isotropic case has been studied.
The vectorial case has been considered by Kohn and Sternberg in \cite{kohn89}, Fonseca and Tartar in \cite{fonseca89} and Baldo \cite{baldo}.
A study of the anisotropic case has been carried out by Bouchitt\'{e}~\cite{bouchitte90} and Owen~\cite{owen91} in the scalar case, and by Barroso and Fonseca~\cite{barroso94} and Fonseca and Popovici~\cite{fonpop} in the vectorial case.

Nonlocal approximations of local functionals of the perimeter type go back to the work \cite{alberti98} of Alberti and Bellettini (see also \cite{alberti98a}). Several variants and extensions have been considered since then (see, for instance, Savin and Valdinoci \cite{savvaldi} and Esedo\={g}lu and Otto \cite{eseotto}).
In particular, nonlocal functionals have been used by Brezis, Bourgain and Mironescu in \cite{brebourmir} to characterize Sobolev spaces (see also the work \cite{ponce04} of Ponce).

Approximations of (anisotropic) perimeter functionals via energies defined in the discrete setting have been carried out by Braides and Yip in \cite{brayip} and by Chambolle, Giacomini and Lussardi in \cite{chagialus}.

\subsection{The Choice of \texorpdfstring{$p$}{p}} \label{subsec:Intro:pEx}

By allowing for any $p\geq 1$ in the definition of our functionals $\cG_n^{(p)}$ (rather than $p=2$ usually considered) we allow for greater flexibility.
In particular the choice of $p$ has an important consequence regarding the balance between minimising the length of the perimeter and avoiding regions of high density.
As for Euclidean norms, as $p\to \infty$ the Dirichlet energy $\cE_n^{(p)}$, defined by
\[ \cE_n^{(p)}(u) = \ls \sum_{i,j=1}^n W_{ij}^p |u(x_i)-u(x_j)|^p\rs^{\frac{1}{p}}, \]
converges as $p\to \infty$ to $\cE_n^{(\infty)}$, defined by
\[ \cE_n^{(\infty)}(u) = \max_{i,j=1,\dots,n} W_{ij} |u(x_i)-u(x_j)|. \]
In fact the convergence also holds in a variational sense~\cite{egger90,kyng15}.
It is therefore of no surprise that the same holds in the Ginzburg-Landau setting (where the normalisation has been adjusted so that terms are of order 1 with respect to $p$).

\begin{proposition}
Fix $n\in \bbN$, and assume $V:\bbR\to [0,\infty)$ is continuous and weights $W_{ij}\geq 0$ for all $i,j=1,\dots, n$.
Let
\begin{align*}
\tilde{\cG}_n^{(p)}(u) & = \ls\frac{1}{\eps_n n^2} \sum_{i,j=1}^n W_{ij}^p |u(x_i)-u(x_j)|^p + \frac{1}{\eps_n^p n} \sum_{i=1}^n V^p(u(x_i)) \rs^{\frac{1}{p}} \\
\cG_n^{(\infty)}(u) & = \max\lb \max_{i,j=1,\dots,n} W_{ij} |u(x_i)-u(x_j)|, \frac{1}{\eps_n} \max_{i=1,\dots,n} V(u(x_i)) \rb.
\end{align*}
Then,
\[ \Glim_{p\to \infty} \tilde{\cG}_n^{(p)} = \cG_n^{(\infty)}. \]
\end{proposition}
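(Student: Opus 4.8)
The plan is to exploit that $n$ is fixed, so that everything takes place in the finite-dimensional space $L^1(X_n)\cong\bbR^n$ (a function $u$ is identified with the vector $(u(x_1),\dots,u(x_n))$), on which the $L^1(\mu_n)$-topology coincides with ordinary convergence of the vectors. Write the quantity inside the bracket defining $\tilde{\cG}_n^{(p)}$ as a weighted power sum
\[
S_p(u) := \frac{1}{\eps_n n^2}\sum_{i,j=1}^n a_{ij}(u)^p + \frac{1}{n}\sum_{i=1}^n b_i(u)^p,
\]
with the nonnegative building blocks $a_{ij}(u):=W_{ij}|u(x_i)-u(x_j)|$ and $b_i(u):=V(u(x_i))/\eps_n$, so that $\tilde{\cG}_n^{(p)}(u)=S_p(u)^{1/p}$ and $\cG_n^{(\infty)}(u)=\max\{\max_{ij}a_{ij}(u),\max_i b_i(u)\}=:M(u)$. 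Each $a_{ij}$ is continuous (it is $W_{ij}$ times a continuous function of $u$), each $b_i$ is continuous because $V$ is continuous by hypothesis, and therefore their finite maximum $M$ is continuous on $\bbR^n$. My intention is to prove the stronger statement of \emph{continuous convergence}, namely that $\tilde{\cG}_n^{(p)}(u_p)\to M(u)$ for every sequence $u_p\to u$, from which $\Gamma$-convergence is immediate in this metrizable setting.

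The core is a two-sided, weight-insensitive power-mean estimate. Let $c_{\min}:=\min\{\frac{1}{\eps_n n^2},\frac1n\}>0$, $c_{\max}:=\max\{\frac{1}{\eps_n n^2},\frac1n\}$, and let $m:=n^2+n$ be the (fixed) number of summands. Bounding $S_p(u_p)$ from below by its single largest summand and from above by $m$ times its largest summand gives
\[
c_{\min}^{1/p}\, M(u_p) \;\le\; \tilde{\cG}_n^{(p)}(u_p) \;\le\; \big(m\,c_{\max}\big)^{1/p}\, M(u_p).
\]
Since $c_{\min},c_{\max},m$ are fixed positive constants, both $c_{\min}^{1/p}$ and $(m\,c_{\max})^{1/p}$ tend to $1$ as $p\to\infty$; and $M(u_p)\to M(u)$ by continuity of $M$ together with $u_p\to u$. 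Hence both sides converge to $M(u)$, and the sandwich yields $\tilde{\cG}_n^{(p)}(u_p)\to M(u)=\cG_n^{(\infty)}(u)$, which is exactly continuous convergence. (When $M(u)=0$ one has $S_p\equiv 0$ and the statement is trivial.)

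Continuous convergence then delivers both halves of $\Gamma$-convergence at once. For the liminf inequality, any $u_p\to u$ satisfies $\liminf_p\tilde{\cG}_n^{(p)}(u_p)=\cG_n^{(\infty)}(u)\ge\cG_n^{(\infty)}(u)$; for the recovery inequality it suffices to take the constant sequence $u_p\equiv u$, for which the specialisation of the estimate above gives $\lim_p\tilde{\cG}_n^{(p)}(u)=\cG_n^{(\infty)}(u)$, so $\limsup_p\tilde{\cG}_n^{(p)}(u)\le\cG_n^{(\infty)}(u)$. I do not anticipate a serious obstacle: the argument is a finite-dimensional $\ell^p\to\ell^\infty$ computation, and the only points requiring care are that the $p$-th roots of the fixed coefficients and of the number of terms converge to $1$ (so the weights become asymptotically invisible), and that the building blocks $a_{ij},b_i$ are continuous — the latter being where the continuity hypothesis on $V$ is used to pass to the limit along a varying sequence $u_p\to u$.
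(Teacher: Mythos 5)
Your proof is correct and is essentially the argument the paper gives: both rest on the same finite-dimensional $\ell^p$-versus-$\ell^\infty$ estimates — bounding the sum from below by its largest summand and from above by the (fixed) number of summands times the largest one, so that the $p$-th roots of all constants tend to $1$, with continuity of $V$ (hence of the max functional $M$) used to pass to the limit along $u_p\to u$. The only difference is packaging: you prove the slightly stronger statement of continuous convergence and deduce both $\Gamma$-inequalities from it, whereas the paper runs the lower bound along an arbitrary sequence (fixing indices $r,t,\ell$, passing to the limit, then choosing maximizers) and the upper bound only along the constant recovery sequence $u_p\equiv u$; the underlying computation is identical, and your parenthetical about the case $M(u)=0$ is in fact unnecessary since the sandwich handles it with no case distinction.
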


\begin{proof}
The proof is easy since the functionals are on discrete domains.
In particular, for the liminf inequality we assume $L^0(X_n)\ni u_p \to u \in L^0(X_n)$ then,
\begin{align*}
\tilde{G}_n^{(p)}(u_p) & \geq \ls \frac{1}{\eps_n n^2} W_{rt}^p |u_p(x_r) - u_p(x_t)|^p + \frac{1}{\eps_n^p n} V^p(u_p(x_\ell)) \rs^{\frac{1}{p}} \\
 & \geq \max\lb \frac{1}{\eps_n^{\frac{1}{p}} n^{\frac{2}{p}}} W_{rt} |u_p(x_r) - u_p(x_t)|, \frac{1}{\eps_n n^{\frac{1}{p}}} V(u_p(x_\ell)) \rb \\
 & \to \max \lb W_{rt} |u(x_r) - u(x_t)|, \frac{1}{\eps_n} V(u(x_\ell)) \rb
\end{align*}
for any $r,t,\ell\in \{1,\dots, n\}$.
Choosing $(r,t) = \argmax_{i,j} W_{ij}|u(x_i)-u(x_j)|$ and $\ell = \argmax_i V(u(x_i))$ implies
\[ \liminf_{p\to \infty}\tilde{G}_n^{(p)}(u_p) \geq G_n^{(\infty)}(u). \]
For the recovery sequence we consider the sequence $u_p=u$ then,
\[ \tilde{G}_n^{(p)}(u_p) \leq \max\lb \frac{1}{\eps_n^{\frac{1}{p}}} \max_{i,j=1,\dots, n} W_{ij} |u(x_i) - u(x_j)|, \frac{1}{\eps_n} \max_{i=1,\dots,n} V(u(x_i)) \rb \to \cG_n^{(\infty)}(u) \]
as required.
\end{proof}

There are several ways we could have renormalised $\cG_n^{(p)}$ to obtain a well defined limit as $p\to \infty$.
We choose a normalisation $\tilde{\cG}_n^{(p)}$ that closely resembled $\cG_n^{(p)}$ and such that the limit $\cG_n^{(\infty)}$ will exhibit a phase transition as $n\to \infty$.
In particular, the limit functional $\cG_n^{(\infty)}$ is chosen so that minimisers $u_n$ have the property that $u_n(x)\to \{\pm 1\}$ for almost every $x$, and $\max_{i,j} W_{ij}|u_n(x_i)-u_n(x_j)| = O(1)$.

The important property to note is that the $p=\infty$ limit depends only on the data through its support.
In particular, if one considers the semi-supervised learning problem (similar to Remark~\ref{rem:Intro:MainRes:SLL} with $\lambda_n=\infty$):
\[ \text{minimise } \tilde{\cG}_n^{(p)}(u) \text{ subject to } u(x_i) = y_i \text{ for } x_i \in X^\prime \]
then the density of the data in $X\setminus X^\prime$ is irrelevant for $p=\infty$.

To illustrate this we consider a simple example.
We consider a density $\rho$ as in Figure~\ref{fig:Intro:p:ex}; in particular the density is given by $\rho(x) \propto \phi(x_1)\lfloor_X$ where $\phi$ is the density of a normal random variable, with mean at $0$ and standard deviation $0.25$, and $X$ is the bean shape.
The regions of labelled data are chosen to be the balls $B((-0.3,0),0.08)$ and $B((0.3,0),0.08)$, with the first ball labelled $-1$ and the second ball labelled $+1$.
The distribution is chosen so that the density is largest where the bean is narrowest.
The optimal partitioning, given by minimizing $\cG_n^{(p)}$ is therefore a trade off between minimizing the length of the perimeter of the partitioning and avoiding high density regions.
We plot the optimal partitioning for two choices of $p$, $p=2$ and $p=100$, using the same data set.
We see that for $p=2$ the partitioning avoids the region of high density at the cost of increasing the length of the boundary.
On the other hand, when $p=100$ the partitioning is much closer to the centre of the bean where the length of the partitioning is minimized.

\begin{figure}
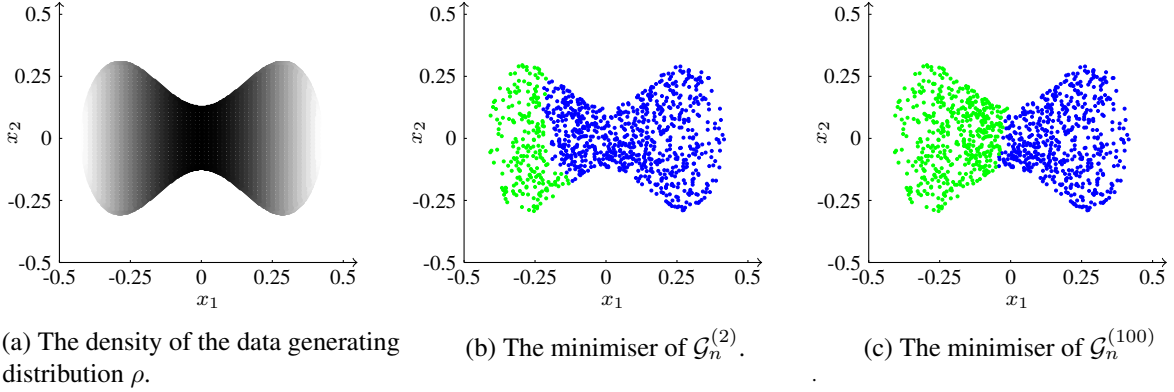

\centering
\setlength\figureheight{0.25\textwidth}
\setlength\figurewidth{0.32\textwidth}
\begin{subfigure}[t]{0.32\textwidth}
\scriptsize
\input{BeanDensityMS.tikz}
\caption{The density of the data generating distribution $\rho$.}
\end{subfigure}
\begin{subfigure}[t]{0.32\textwidth}
\scriptsize
\input{p2Cut_n1000_N10_s1_sd0p25.tikz}
\caption{The minimiser of $\cG_n^{(2)}$.}
\end{subfigure}
\begin{subfigure}[t]{0.32\textwidth}
\scriptsize
\input{p100Cut_n1000_N10_s1_sd0p25.tikz}
\caption{The minimiser of $\cG_n^{(100)}$}.
\end{subfigure}
\caption{How the choice of $p$ affects the trade-off between avoiding high density regions and minimising the length of the classification boundary.
The experiment setup is described in Section~\ref{subsec:Intro:pEx} and we computed minimisers using a gradient flow (known as the Allen-Cahn equation).
\label{fig:Intro:p:ex}
}
\end{figure}

\subsection{Application to Image Segmentation}  \label{subsec:Intro:AnisoEx}

Let us consider segmentation in images.
Let $x_i\in [0,1]^2$ denote the location of pixels (in particular $X_n = \{x_i\}_{i=1}^n$ form a regular grid with distance $\frac{1}{\sqrt{n}}$ between neighbouring pixels).
Let $y_i\in \bbR^3$ be the RGB values of a given image at the pixel located at $x_i$.
We assume $\cI\subset \{1,\dots,n\}$ indexes the a-priori labelled pixels and $f:\{x_i\}_{i\in\cI} \to \bbR$ are the given labels.
We wish to label the remaining pixels $\{x_i\}_{i\not\in \cI}$.
Define
\[ W_{i,j} = \frac{1}{\eps^2} \Phi(|y_{i} - y_{j}|) \chi_{{|x_{i} - x_{j}|\leq\eps}} \]
where $\Phi:\bbR\to\bbR$ is a decreasing function and $\eps$ determines the number of neighbouring pixels we connect, e.g. $\eps = \frac{1}{\sqrt{n}}$ connects the four neighbouring pixels, $\eps = \frac{\sqrt{2}}{\sqrt{n}}$ connects the eight (including diagonals) neighbouring pixels, etc.
Note that we are thinking of this problem in 2D where the kernel $\eta$ depends not just on the spatial difference but also on the values of pixels, i.e. $\eta(x_i-x_j) = \eta(x_i-x_j;y_i,y_j)$.
This falls slightly outside of our theoretical framework as the function $\eta$ now depends on more than just the difference between $x_i$ and $x_j$. 

The segmentation is defined by minimising
\[ \cG^{(p)}_n(u) + \cK_n(u) = \frac{\lambda}{2} \sum_{i\in \cI} |f(x_{i})- u(x_{j})|^2 + \frac{1}{\eps n^2} \sum_{i,j\in \{1,\dots,n\}} W_{ij} |u(x_{i}) - u(x_{j})|^p + \frac{1}{\eps n} \sum_{i=1}^n  V(u(x_{i})). \]
This setup is similar to Bertozzi and Flenner~\cite{bertozzi12} and
Calatroni, van Gennip, Sch\"{o}nlieb, Rowland and Flenner~\cite{calatroni17} however they embed the problem into a much higher dimensional space.
In particular, in~\cite{bertozzi12,calatroni17} the vertex corresponding to pixel $i$ is the vector consisting of the RGB values of the $M$ nearest neighbours, so that the graph is embedded in $\bbR^{3M}$ compared to the ambient dimension in our setup which is $\bbR^2$.
We refer to methods in~\cite{bertozzi12,calatroni17} as the segmentation in colour space method and our method as segmentation in pixel space.

It is not within the scope of this paper to comprehensively compare the differences in approach (in fact we note that the theoretical results in this paper apply to~\cite{bertozzi12,calatroni17}).
However, we point out that the method proposed here has the advantage that the parameter $\eps$ directly controls the number of pixels connected which can make it easier for parameter selection.
Choosing the parameter $\eps$ in the colour space segmentation method is not as clear as one needs to compute the connectivity radius of the graph.

On the other hand, the pixel space segmentation method proposed here is spatially local which can make it difficult for similar colours that are spatially separated to be placed in the same class.
In the colour space segmentation method similar colours are close on the graph regardless of the pixel locations.

In Figure~\ref{fig:Intro:AnisoEx:ex} we plot an example of an image segmented into background and foreground.
Given the image in Figure~\ref{fig:Intro:AnisoEx:ex}(a), where regions have been labelled either background (marked green), or foreground (marked red).
We construct a graph as described above with
\[ \Phi(t) = \exp\l -\frac{t^2}{\tau}\r, \qquad \tau = 5\times 10^{-4}  \]
and choose $\eps = \frac{1}{\sqrt{n}}$ and $p=2$.
Using a gradient flow to minimise $\cG_n^{(p)}+\cK_n$ we produce the output given in Figure~\ref{fig:Intro:AnisoEx:ex}(b).
Since $\eps$ determines the number of connections of each pixel it was easy to select parameters with a desirable output.

\begin{figure}
\centering
\begin{subfigure}[t]{0.48\textwidth}
\includegraphics[width=0.9\textwidth]{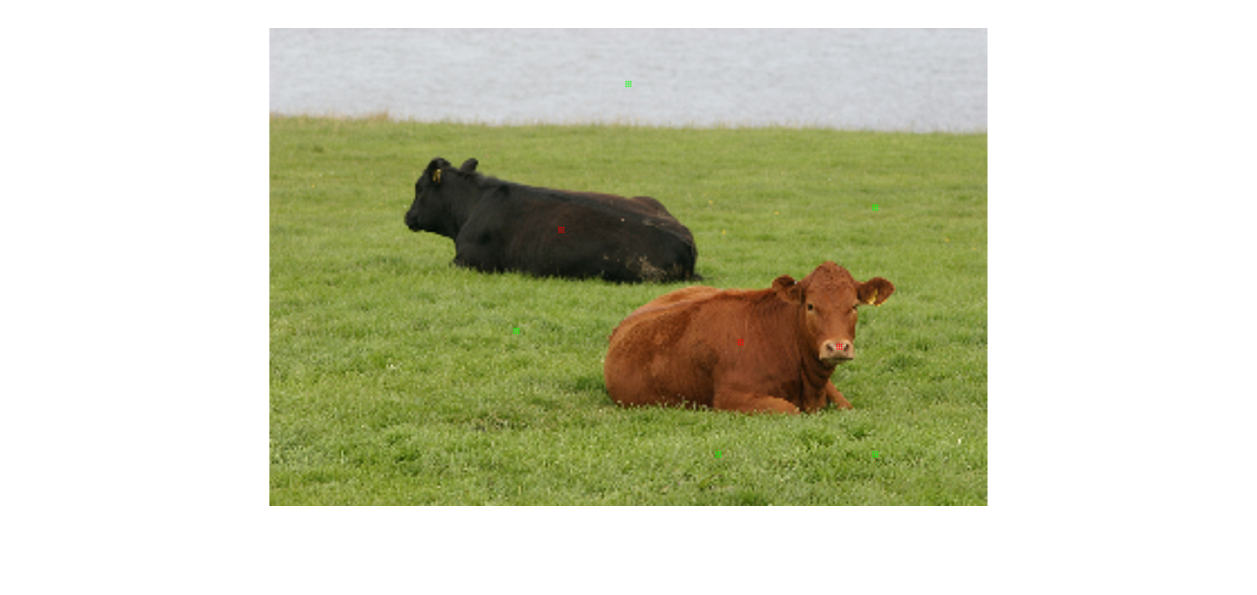}
\caption{Input images with a-priori given labels marked.}
\end{subfigure}
~
\begin{subfigure}[t]{0.48\textwidth}
\includegraphics[width=0.9\textwidth]{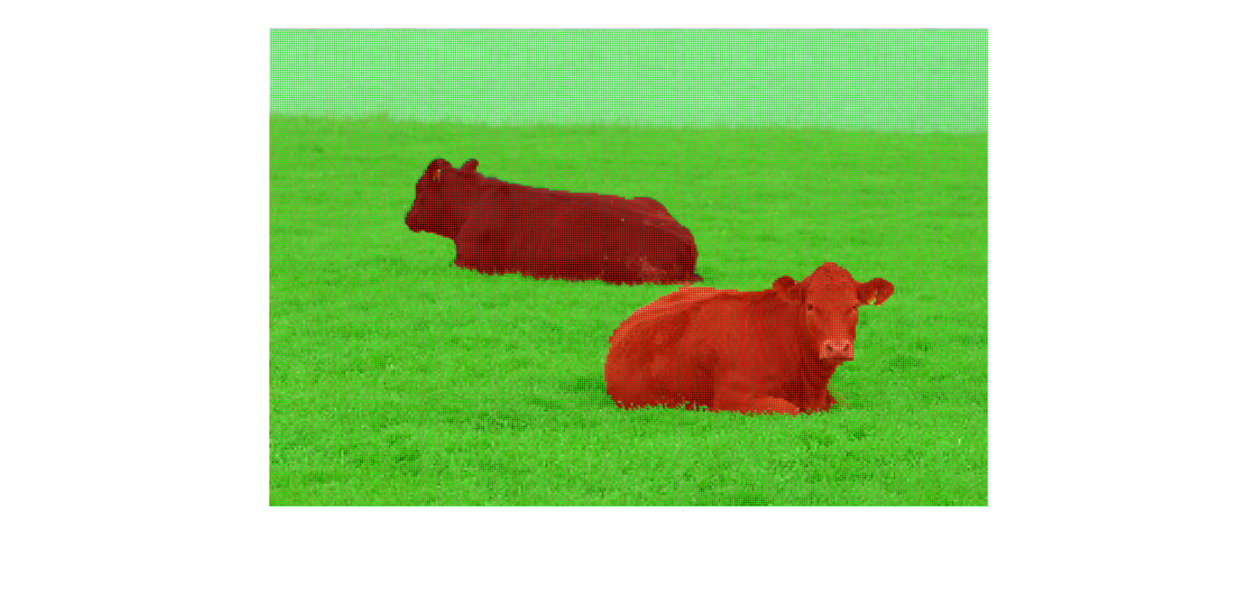}
\caption{The minimiser of $\cG_n^{(2)}+\cK_n$.}
\end{subfigure}
\caption{An image segmentation example with experimental setup as described in Section~\ref{subsec:Intro:AnisoEx}.
The image is from the Microsoft image database and also appeared in~\cite{bertozzi12}.
\label{fig:Intro:AnisoEx:ex}
}
\end{figure}


\section{Background \label{sec:Back}}

\subsection{Notation}

In the following $\chi_E$ will denote the characteristic function of a set $E\subset\bbR^d$, while
$\Vol(E)=\cL^d(E)$ will denote its $d$-dimensional Lebesgue measure and $\cH^{d-1}(E)$ its $(d-1)$-Hausdorff measure.
Moreover, with $B(x,r)$ we will denote the ball centered at $x\in\bbR^d$ with radius $r>0$
and we set $\mathbb{S}^{d-1}:=\partial B(0,1)$.
The identity map will be denoted by $\mathrm{Id}$.

Given an open set $X\subset\bbR^d$, we define the space
\[
\cP(X):=\{\, \text{ Radon measures } \mu \text{ on } X \text{with } \mu(X)=1 \,\}\,.
\]

Given a set of data points $\{x_i\}_{i=1}^n$ we define the empirical measure as follows.
\begin{mydef}\label{def:empmeas}
For all $n\in\bbN$ let $X_n:=\{x_i\}_{i=1}^n$ be a set of $n$ random variables. We define the \emph{empirical measure} $\mu_n$ as
\[
\mu_n:=\frac{1}{n}\sum_{i=1}^n \delta_{x_i}\,,
\]
where $\delta_x$ denotes the Dirac delta centered at $x$. 
\vspace{0.5\baselineskip}
\end{mydef}

We state our results in terms of a general sequence of empirical measures $\mu_n$ that converge weak$^*$ to some $\mu\in\cP(X)$.
An important special case is when $x_i$ are independent and identically distributed (which we abbreviate to iid) from $\mu$.

\begin{remark}\label{rem:weakconv}
When $x_i\iid \mu$ then $\mu_n$ converges, with probability one, to $\mu$ weakly${^*}$ in the sense of measures, see for example~\cite[Theorem 11.4.1]{dudley02}, (and we write $\mu_n\weakstarto\mu$), \emph{i.e.}
\[ \int_X \varphi \,\dd \mu_n\to \int_X \varphi\,\dd\mu \]
as $n\to\infty$, for all $\varphi\in C_c(X)$.
\end{remark}

We write $L^p(X,\mu;Y)$ for the space of $L^p$ integrable, with respect to $\mu$, functions from $X$ to $Y$.
We will often suppress the $Y$ dependence and just write $L^p(X,\mu)$.
Moreover, if $\mu=\cL^d$ then we will often write $L^p(X) = L^p(X,\mu)$.
If $\mu=\mu_n$ is the empirical measure we also write $L^p(X_n) = L^p(X,\mu_n)$.


\subsection{Transportation theory \label{subsec:Prelim:Trans}}

In this section we collect the material needed in order to explain how to compare functions defined in different spaces, namely a function $w\in L^1(X,\mu)$ and a function $u\in L^1(X_n,\mu_n)$, where $X\subset\bbR^d$ is an open set and $X_n\subset X$ is a finite set of points.
This is fundamental in stating our $\Gamma$-convergence result (Theorem \ref{thm:MainRes:Compact&Gamma}).
The $TL^p$ space was introduced in~\cite{garciatrillos16} and consists of comparing $w$ and a piecewise constant extension of the function $u$ in $L^p$.
In particular, we take a map $T:X\to X_n$ and we consider the function $v:X\to\bbR$ defined as $v:=u\circ T$. 
In order that this defines a metric one needs to impose conditions on $T$, the natural conditions are that $T$ "matches the measure $\mu$ with $\mu_n$" and is \emph{optimal} in the sense that matching moves as little mass as possible (see Theorem \ref{thm:optrate}).
This will be done by using the optimal transport distance that we recall now (see also~\cite{villani} for background on optimal transport and \cite{garciatrillos16,thorpe17cAAA} for a further description of the $TL^p$ space).

\begin{mydef}
\label{def:Back:Trans:Wass}
Let $X\subset\bbR^d$ be an open set and let $\mu,\lambda\in\cP(X)$.
We define the set of couplings $\Gamma(\mu,\lambda)$ between $\mu$ and $\lambda$ as
\[
\Gamma(\mu,\lambda):=\left\{\, \pi\in\cP(X\times X) \,:\, \pi(A\times X)=\mu(A),\, \pi(X\times A)=\lambda(A),\, \text{ for all measurable } A\subset X  \,\right\}\,.
\]
For $p\in[1,+\infty]$, we define the $p$-Wasserstein distance between $\mu$ and $\lambda$ as follows:
\begin{itemize}
\item when $1\leq p<\infty$,
\[
\dist_p(\mu,\lambda):=\inf\left\{\, \l\int_{X\times X} |x-y|^p \, \mathrm{d}\pi(x,y)\,\r^{\frac{1}{p}} \,:\, \pi\in\Gamma(\mu,\lambda)  \,\right\}\,,
\]
\item when $p=\infty$,
\[
\dist_\infty(\mu,\lambda):=\inf\left\{\, \mathrm{esssup}_\pi \left\{\, |x-y| \,:\, (x,y)\in X\times X \,\right\}\, : \, \pi\in\Gamma(\mu,\lambda)  \,\right\}\,,
\]
where $\mathrm{esssup}_\pi$ denotes the essential supremum with respect to the measure $\pi$.
\end{itemize} \vspace*{0\baselineskip}
\end{mydef}

\begin{remark}\label{rem:ot}
The infimum problems in the above definition are known as the Kantorovich optimal transport problems and the distance is commonly called the $p^\text{th}$ Wasserstein distance or sometimes the earth movers distance.
It is possible to see (see \cite{villani}) that the infimum is actually achieved.
Moreover, the metric $\dist_p$ is equivalent to the weak$^*$ convergence of probability measures $\cP(X)$ (plus convergence of $p^{\text{th}}$ moments).
\vspace{0.8\baselineskip}
\end{remark}

We now consider the case we are interested in: take $\mu\in\cP(X)$ with $\mu=\rho\cL^d$ (where $\cL^d$ is the $d$-dimensional Lebesgue measure on $\bbR^d$) and assume the density $\rho$ is such that $0<c_1\leq \rho\leq c_2<\infty$.
In this case the Kantorovich optimal transport problem is equivalent to the Monge optimal transport problem (see \cite{gangmcc}).
In particular, for $p\in[1,+\infty)$ it holds that
\[
\dist_p(\mu,\lambda)=\min\left\{\, \|\mathrm{Id}-T\|_{L^p(X,\mu)} \,:\, T:X\to X \text{ Borel},\,\, T_\# \mu=\lambda  \,\right\}\,,
\]
where
\[
\|\mathrm{Id}-T\|^p_{L^p(X,\mu)}:=\int_X |x-T(x)|^p \rho(x)\,\dd x
\]
and we define the \emph{push forward} measure $T_\#\mu\in\cP(X)$ as $T_\#\mu(A):=\mu\left( T^{-1}(A) \right)$
for all $A\subset X$.
In the case $p=+\infty$ we get
\[
\dist_\infty(\mu,\lambda)=\inf\left\{\, \|\mathrm{Id}-T\|_{L^\infty(X,\mu)} \,:\, T:X\to X \text{ Borel},\,\, T_\# \mu=\lambda  \,\right\}\,,
\]
where
\[ \|\mathrm{Id}-T\|_{L^\infty(X,\mu)}:=\esssup_{X}\rho(x)[x-T(x)]. \]
A map $T$ is called a \emph{transport map} between $\mu$ and $\lambda$ if $T_{\#}\mu=\lambda$.

Throughout the paper we will assume the empirical measures $\mu_n$ converges weakly${^*}$ to $\mu$ (see Remark \ref{rem:weakconv} for iid samples) so by Remark~\ref{rem:ot} there exists a sequence of Borel maps $\{T_n\}_{n=1}^\infty$ with $T_n: X\to X_n$ and $(T_n)_\#\mu=\mu_n$ such that
\[
\lim_{n\to\infty}\|\mathrm{Id}-T_n\|^p_{L^p(X,\mu)}=0\,.
\]
Such a sequence of functions $\{T_n\}_{n=1}^\infty$ will be called \emph{stagnating}.
We are now in position to define the notion of convergence for sequences $u_n\in L^p(X_n)$ to a continuum limit $u\in L^p(X,\mu)$.

\begin{mydef} \label{def:Back:Trans:TLpConv}
Let $u_n\in L^p(X_n)$, $w\in L^p(X,\mu)$ where $X_n=\{x_i\}_{i=1}^n$ and assume that the empirical measure $\mu_n$ converges weak$^*$ to $\mu$. We say that $u_n\to w$ in $TL^p(X)$, and we write $u_n\TLpto w$, if there exists a sequence of stagnating transport maps $\{T_n\}_{n=1}^\infty$ between $\mu$ and $\mu_n$ such that
\begin{equation}\label{eq:stagn}
\| v_n-w\|_{L^p(X,\mu)}\to0\,,
\end{equation}
as $n\to\infty$, where $v_n:=u_n\circ T_n$.
\vspace{0.5\baselineskip}
\end{mydef}

\begin{remark}
One can show that if \eqref{eq:stagn} holds for one sequence of stagnating maps, then it holds for all sequences of stagnating maps\cite[Proposition 3.12]{garciatrillos16}. Moreover, since $\rho$ is bounded above and below it holds
\[
\| v_n-\mathrm{Id}\|_{L^p(X,\mu)}\to0\quad\Leftrightarrow\quad \| v_n-\mathrm{Id}\|_{L^p(X)}\to0\,.
\]
\vspace{0.8\baselineskip}
\end{remark}

We have introduced $TL^p$ convergence $u_n\TLpto u$ by defining transport maps $T_n:X\to X_n$ which "optimally partition" the space $X$ after which we define a piecewise constant extension of $u_n$ to the whole of $X$.
This constructionist approach is how we use $TL^p$ convergence in our proofs.
However, this description hides the metric properties of $TL^p$.
We briefly mention here the metric structure which characterises the convergence given in Definition~\ref{def:Back:Trans:TLpConv}.
We define the $TL^p(X)$ space as the space of couplings $(u,\mu)$ where $\mu\in \cP(X)$ has finite $p^\text{th}$ moment and $u\in L^p(\mu)$.
We define the distance $d_{TL^p}:TL^p(X)\times TL^p(X)\to [0,+\infty)$ for $p\in [1,+\infty)$ by
\begin{align*}
d_{TL^p}((u,\mu),(v,\lambda)) & := \min_{\pi\in \Gamma(\mu,\nu)} \l\int_{X^2} |x-y|^p + |u(x) - v(y)|^p \, \dd \pi(x,y)\r^{\frac{1}{p}} \\
 & = \inf_{T_{\#}\mu=\lambda} \l\int_X |x-T(x)|^p + |u(x)-v(T(x))|^p \, \dd \mu(x)\r^{\frac{1}{p}},
\end{align*}
or for $p=+\infty$ by
\begin{align*}
d_{TL^\infty}((u,\mu),(v,\lambda)) & := \inf_{\pi\in \Gamma(\mu,\nu)} \l \essinf_\pi \lb |x-y| + |u(x) - v(y)| \, : \,  (x,y)\in X\times X \rb \r \\
 & = \inf_{T_{\#}\mu=\lambda} \l \essinf_\mu \lb |x-T(x)| + |u(x)-v(T(x))| \, : \, x\in X \rb \r.
\end{align*}

\begin{proposition}
The distance $d_{TL^p}$ is a metric and furthermore, $d_{TL^p}((u_n,\mu_n),(u,\mu))\to 0$ if and only if $\mu_n\weakstarto \mu$ and there exists a sequence of stagnating transport maps $\{T_n\}_{n=1}^\infty$ between $\mu$ and $\mu_n$ such that $\|u_n\circ T_n-u\|_{L^p(X,\mu)}\to 0$.
\end{proposition}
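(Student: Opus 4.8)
The plan is to realise $d_{TL^p}$ as an honest Wasserstein distance on an auxiliary space, from which the metric axioms are inherited, and then to translate Wasserstein convergence back into the statement about stagnating maps. To each coupling $(u,\mu)\in TL^p(X)$ associate the \emph{graph measure} $\nu_{(u,\mu)}:=(\mathrm{Id}\times u)_\#\mu\in\cP(X\times\bbR)$, i.e. the push-forward of $\mu$ under $x\mapsto(x,u(x))$. First I would show
\[
d_{TL^p}((u,\mu),(v,\lambda)) = \dist_p\big(\nu_{(u,\mu)},\nu_{(v,\lambda)}\big),
\]
where on the right $\dist_p$ denotes the $p$-Wasserstein distance on $\cP(X\times\bbR)$ computed with the product cost $((x,s),(y,t))\mapsto(|x-y|^p+|s-t|^p)^{1/p}$ (with the obvious modification when $p=\infty$). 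Both inequalities are soft. Given $\pi\in\Gamma(\mu,\lambda)$, its push-forward under $(x,y)\mapsto((x,u(x)),(y,v(y)))$ is a coupling of the two graph measures realising the cost $\int(|x-y|^p+|u(x)-v(y)|^p)\,\dd\pi$, so $\dist_p(\nu_{(u,\mu)},\nu_{(v,\lambda)})\leq d_{TL^p}((u,\mu),(v,\lambda))$. Conversely, any coupling $\tilde\pi$ of $\nu_{(u,\mu)}$ and $\nu_{(v,\lambda)}$ is concentrated on the product of the two graphs, so $s=u(x)$ and $t=v(y)$ hold $\tilde\pi$-almost everywhere; projecting off the $\bbR$-coordinates returns a coupling of $\mu$ and $\lambda$ of the same cost, giving the reverse inequality.

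With this identification in hand, the metric properties are immediate. Symmetry and non-negativity are clear; the triangle inequality follows from the gluing lemma exactly as for any Wasserstein distance; and $d_{TL^p}((u,\mu),(v,\lambda))=0$ forces $\nu_{(u,\mu)}=\nu_{(v,\lambda)}$, which (reading off the first marginal) gives $\mu=\lambda$ and then $u=v$ $\mu$-almost everywhere, i.e. equality in $TL^p(X)$. The expression of $d_{TL^p}$ as a minimum (rather than infimum) over $\pi$, together with the equivalent formulation as an infimum over transport maps $T$ with $T_\#\mu=\lambda$, uses that optimal couplings exist and --- when the source marginal is absolutely continuous --- may be taken to be (or approximated by) maps; this is precisely the Monge--Kantorovich equivalence recalled in Section~\ref{subsec:Prelim:Trans}, applied on the product space.

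For the characterisation of convergence I would argue both directions through the same identification. Suppose $d_{TL^p}((u_n,\mu_n),(u,\mu))\to0$, i.e. $\dist_p(\nu_n,\nu)\to0$ for the graph measures $\nu_n:=\nu_{(u_n,\mu_n)}$, $\nu:=\nu_{(u,\mu)}$. Wasserstein convergence implies weak$^*$ convergence together with convergence of $p$-th moments (see Remark~\ref{rem:ot}); pushing forward by the projection $X\times\bbR\to X$ gives $\mu_n\weakstarto\mu$. Since $\mu=\rho\cL^d$ with $\rho$ bounded away from $0$ and $\infty$, the source marginal is absolutely continuous, so near-optimal couplings can be realised by transport maps: choose $T_n:X\to X_n$ with $(T_n)_\#\mu=\mu_n$ and
\[
\int_X |x-T_n(x)|^p + |u(x)-u_n(T_n(x))|^p \,\dd\mu(x) \longrightarrow 0.
\]
Both non-negative summands then tend to zero, the first saying $\{T_n\}$ is stagnating and the second saying $\|u_n\circ T_n-u\|_{L^p(X,\mu)}\to0$. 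Conversely, given such $T_n$, the map formulation (after invoking symmetry to place the absolutely continuous measure $\mu$ as the source) yields the upper bound
\[
d_{TL^p}((u_n,\mu_n),(u,\mu))^p \leq \|\mathrm{Id}-T_n\|_{L^p(X,\mu)}^p + \|u-u_n\circ T_n\|_{L^p(X,\mu)}^p \to 0,
\]
which closes the equivalence. The case $p=\infty$ is handled identically after replacing every $L^p$ norm by the corresponding essential supremum.

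The main obstacle is the passage between couplings and transport maps: the two displayed formulas for $d_{TL^p}$ in the statement are phrased with a map $T$ whose source is $\mu$ or $\mu_n$, and because $\mu_n$ is purely atomic there is no map pushing $\mu_n$ onto $\mu$. One must therefore exploit symmetry of the metric (established via the graph-measure identification, which is insensitive to the order of the arguments) in order always to transport out of the absolutely continuous measure $\mu$, and invoke the density of transport maps among transport plans for absolutely continuous source measures to turn near-optimal plans into the required maps $T_n$. Checking that this approximation step is compatible with the combined spatial-plus-value cost, and that the maps can be chosen measurably and uniformly near-optimal, is the only genuinely technical point; everything else is a direct transcription of standard Wasserstein facts.
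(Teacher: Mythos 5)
Your proposal is correct and takes essentially the same route as the paper's proof: the paper defers to Garc\'{i}a Trillos and Slep\v{c}ev \cite[Remark 3.4 and Proposition 3.12]{garciatrillos16}, where $d_{TL^p}$ is likewise identified with the $p$-Wasserstein distance between the graph measures $(\mathrm{Id}\times u)_{\#}\mu$ on $X\times\bbR$, the metric axioms are inherited from that embedding, and the convergence characterisation is obtained by turning near-optimal plans into transport maps out of the absolutely continuous measure $\mu$. The technical point you flag --- approximating plans by maps for the combined spatial-plus-value cost when the source is atomless and the target is an empirical (finitely supported) measure --- is exactly the ingredient used there, and it does go through (e.g.\ by purification/Lyapunov-convexity arguments for discrete targets), so there is no genuine gap.
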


The proof is given in~\cite[Remark 3.4 and Proposition 3.12]{garciatrillos16}.
Note that Definition~\ref{def:Back:Trans:TLpConv} characterises $TL^p$ convergence.

In order to be able to write the discrete functional  we will need the following result.

\begin{lemma}\label{lem:writeint}
Let $\lambda\in\cP(X)$ and let $T:X\to X$ be a Borel map. Then, for any $u\in L^1(X,\lambda)$ it holds
\[
\int_X u\,\dd T_\#\lambda = \int_X u\circ T\,\dd\lambda\,.
\]
\end{lemma}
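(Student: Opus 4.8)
The plan is to prove the identity by the standard ``layer-cake'' bootstrapping argument, establishing it first for indicator functions and then successively extending to simple functions, to nonnegative Borel functions, and finally to general integrable $u$. Throughout, the only structural input is the defining property of the push-forward, $T_\#\lambda(A)=\lambda(T^{-1}(A))$ for every Borel set $A\subset X$.

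First I would take $u=\chi_A$ for a Borel set $A\subset X$. Since $\chi_A\circ T=\chi_{T^{-1}(A)}$ pointwise, the right-hand side equals $\int_X \chi_{T^{-1}(A)}\,\dd\lambda=\lambda(T^{-1}(A))$, while the left-hand side is $\int_X\chi_A\,\dd T_\#\lambda=T_\#\lambda(A)=\lambda(T^{-1}(A))$; the two agree by the definition of $T_\#\lambda$. By linearity of the integral in both measures, the identity then holds for every nonnegative simple function $s=\sum_{k=1}^m c_k\chi_{A_k}$, using that $s\circ T=\sum_{k=1}^m c_k\chi_{T^{-1}(A_k)}$.

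Next I would pass to an arbitrary nonnegative Borel function $u$. Choosing an increasing sequence of simple functions $s_j\uparrow u$ pointwise (for instance the usual dyadic truncations), one has $s_j\circ T\uparrow u\circ T$ pointwise as well, and $u\circ T$ is Borel because $T$ is Borel and $u$ is Borel. Applying the monotone convergence theorem to both sides of the simple-function identity yields $\int_X u\,\dd T_\#\lambda=\int_X u\circ T\,\dd\lambda$ for all nonnegative $u$. In particular, taking $u=|u|$ shows that $u$ is integrable with respect to $T_\#\lambda$ if and only if $u\circ T$ is integrable with respect to $\lambda$, so the relevant integrals are finite simultaneously.

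Finally, for general $u$ I would decompose $u=u^+-u^-$ into positive and negative parts, apply the nonnegative case to each, and subtract; the subtraction is legitimate precisely because both $\int_X u^{\pm}\,\dd T_\#\lambda$ are finite. This completes the argument. There is no genuine obstacle here---the result is the classical change-of-variables formula for images of measures---so the only points deserving a word of care are the measurability of the composition $u\circ T$ (guaranteed by the Borel hypothesis on $T$) and the bookkeeping ensuring that the appropriate $L^1$ condition is read with respect to $T_\#\lambda$, which the nonnegative step supplies automatically.
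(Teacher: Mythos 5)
Your proof is correct and takes essentially the same route as the paper's: verify the identity on simple functions using the defining property $T_\#\lambda(A)=\lambda(T^{-1}(A))$, then extend to general $u\in L^1(X,\lambda)$ by approximation. The paper compresses your monotone-convergence and positive/negative-part steps into the single phrase ``the result then follows directly from the definition of the integral,'' so your write-up is just a more explicit version of the same argument.
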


\begin{proof}
Let $s:X\to\bbR$ be a simple function. Write
\[
s=\sum_{i=1}^k a_i \chi_{U_i}\,.
\]
Then
\[
\int_X s\,\dd T_\#\lambda =\sum_{i=1}^k a_i T_\#\lambda(U_i)=\sum_{i=1}^k a_i \lambda\left(T^{-1}(U_i)\right)=\int_X s\circ T \,\dd \lambda\,.
\]
The result then follows directly from the definition of the integral.
\end{proof}

\begin{remark}\label{rem:writeint}
Applying the above result to the empirical measures $\mu_n=\frac{1}{n}\sum_{i=1}^n \delta_{x_i}$ and $u\in L^1(X_n)$ we get
\[
\frac{1}{n}\sum_{i=1}^n u(x_i)=\int_X v_n(x) \,\dd\mu(x)\,,
\]
where $v_n:=u\circ T_n$ for any $T_n$ such that $(T_n)_\#\mu=\mu_n$. \vspace{0.8\baselineskip}
\end{remark}

In \cite{garciatrillos15} the authors, Garc\'{i}a Trillos and Slep\v{c}ev, obtain the following rate of convergence for a sequence of stagnating maps. This is of crucial importance for applying the results of this paper to the iid setting.

\begin{theorem}\label{thm:optrate}
Let $X\subset\bbR^d$ be a bounded, connected and open set with Lipschitz boundary.
Let $\mu\in\cP(X)$ be of the form $\mu=\rho\cL^d$ with $0<c_1\leq\rho\leq c_2<\infty$.
Let $\{x_i\}_{i=1}^\infty$ be a sequence of independent and identically distributed random variables distributed on $X$ according to the measure $\mu$, and let $\mu_n$ be the associated empirical measure.
Then, there exists a constant $C>0$ such that, with probability one, there exists a sequence $\{T_n\}_{n=1}^\infty$ of maps $T_n:X\to X$ with $(T_n)_\#\mu=\mu_n$ and
\[
\limsup_{n\to\infty}\frac{\| T_n-\mathrm{Id}\|_{L^\infty(X)}}{\delta_n}\leq C\,,
\]
where
\[
\delta_n:=
\lb \begin{array}{ll}
\sqrt{\frac{\log\log n}{n}} & \text{if } d=1\,, \\
\frac{(\log n)^{\frac{3}{4}}}{\sqrt{n}} & \text{if } d=2\,, \\
\l \frac{\log n}{n} \r^{\frac{1}{d}} & \text{if } d\geq 3\,.
\end{array}
\rd
\]
\vspace{0.5\baselineskip}
\end{theorem}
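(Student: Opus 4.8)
The plan is to follow the argument of Garc\'{i}a Trillos and Slep\v{c}ev in \cite{garciatrillos15}; here I sketch its structure. Since $0<c_1\leq \rho\leq c_2<\infty$, the first step is to reduce to the case of the uniform (Lebesgue) measure on a cube: one builds a fixed bi-Lipschitz map pushing $\mu$ forward to the normalised Lebesgue measure, and because the density is pinched between two positive constants this map distorts the relevant length scales only by a bounded factor, so it suffices to establish the rate $\delta_n$ when $\mu=\cL^d|_{[0,1]^d}$. Throughout, recall that by Remark~\ref{rem:ot} the quantity to estimate, namely $\inf\{\|\Id-T\|_{L^\infty(X)} : T_\#\mu=\mu_n\}$, is exactly $\dist_\infty(\mu,\mu_n)$.

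For the lower bound I would argue via the covering radius. A transport map sends $\mu$-almost every point into the finite set $X_n$, so $\|\Id-T\|_{L^\infty}$ is bounded below by the essential supremum over $x\in X$ of $\dist(x,X_n)$. A ball of radius $r$ is empty of sample points with probability $\approx e^{-cnr^d}$; a union bound over a maximal $r$-separated family of $\approx r^{-d}$ balls shows that, with probability one, for $n$ large some ball of radius $\sim (\log n/n)^{1/d}$ (when $d\geq 3$) contains no sample point, which forces $\dist_\infty(\mu,\mu_n)\gtrsim \delta_n$. The one- and two-dimensional lower bounds come from the same empty-region reasoning, sharpened by, respectively, the law of the iterated logarithm for the spacings of order statistics and the finer fluctuation estimates that produce the $(\log n)^{3/4}$ scale.

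The upper bound is where the real work lies, and the construction is dimension dependent. In dimensions $d\geq 3$ the target rate $\delta_n=(\log n/n)^{1/d}$ coincides with the covering-radius scale, so one partitions $[0,1]^d$ into cubes of side comparable to $\delta_n$ --- each expected to contain $\sim \log n$ points --- and uses a concentration inequality (Bernstein/Chernoff) to show every cube carries its share of points up to a controlled discrepancy; the remaining imbalance is absorbed by matching points across a bounded number of neighbouring cubes, keeping every displacement $O(\delta_n)$.

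The genuinely hard case is $d=2$, where $\delta_n$ exceeds the covering radius by a factor $(\log n)^{1/4}$, reflecting the Ajtai--Koml\'{o}s--Tusn\'{a}dy / Leighton--Shor phenomenon that two-dimensional matchings accumulate fluctuations over all dyadic scales. I would handle it by a multiscale (hierarchical) construction: regularise the empirical measure along a dyadic sequence of scales, transport between consecutive regularisations using that the local discrepancy on a cube of side $2^{-k}$ is controlled with high probability, and sum the displacements. The crux, and the main obstacle, is that the $L^\infty$ norm requires \emph{summing} --- not merely $L^2$-averaging --- the per-scale contributions, and closing the estimate so that the scale sum telescopes to $(\log n)^{3/4}/\sqrt{n}$ rather than to the larger covering radius. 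Passing from this fixed-measure bound back to general $\rho$ via the bi-Lipschitz reduction of the first step then completes the proof.
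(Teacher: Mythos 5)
First, a structural point: the paper does not actually prove this theorem — it is quoted, with citation, from Garc\'{i}a Trillos and Slep\v{c}ev \cite{garciatrillos15}, so there is no internal proof to compare against, and your proposal has to be judged as a reconstruction of that reference.

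Your outline captures the correct global architecture (lower bound from the covering radius; upper bound from Leighton--Shor / Shor--Yukich type matchings on the cube), but it contains two genuine gaps. The first is the reduction step: a bounded, connected, open set with Lipschitz boundary need \emph{not} be bi-Lipschitz equivalent to a cube — a planar annulus already is not, since a bi-Lipschitz map is in particular a homeomorphism. In \cite{garciatrillos15} the domain is instead decomposed into finitely many pieces, each bi-Lipschitz to a cube, and one must then handle the fact that the number of sample points falling in each piece fluctuates around its mean on the CLT scale $\sim \sqrt{n}$; this excess mass has to be moved across the interfaces between pieces without destroying the $L^\infty$ bound, and it is precisely here that connectedness of $X$ enters (the paper's remark after the theorem, that ``the connectedness of $X$ is essential'', points at this step). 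Your single bi-Lipschitz map erases this entire portion of the argument. The second gap is that the heart of the upper bound is asserted rather than proved: for $d=2$ you name the multiscale construction and then explicitly label the summation over scales to $(\log n)^{3/4}/\sqrt{n}$ as ``the main obstacle'', i.e.\ you leave it open; and even for $d\geq 3$ the claim that each cube's discrepancy ``is absorbed by matching points across a bounded number of neighbouring cubes'' is unjustified, because local imbalances are not independent obstacles — one must exclude accumulation of discrepancy over regions of every size, which is why the actual proof is hierarchical (dyadic) in that case too, with the finest scale merely dominating the sum. As it stands, the proposal is a sensible road map in which the two hardest segments of the road are missing.
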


\begin{remark}
The proof for $d=1$ is simpler and follows from the law of iterated logarithms.
Notice that the connectedness of $X$ is essential in order to get the above result. \vspace{0.8\baselineskip}
\end{remark}

By the above theorem our main result, Theorem~\ref{thm:MainRes:Compact&Gamma}, holds with probability one when $x_i\iid \mu$ and the graph weights are scaled by $\eps_n$ with $\eps_n\gg \delta_n$.


\subsection{Sets of finite perimeter}

In this section we recall the definition and basic facts about sets of finite perimeter. We refer the reader to \cite{AFP} for more details. 

\begin{mydef}
Let $E\subset\bbR^d$ with $\Vol(E)<\infty$ and let $X\subset\bbR^d$ be an open set.
We say that $E$ has \emph{finite perimeter} in $X$ if
\[
|D\chi_E|(X):=\sup\left\{\, \int_E \mathrm{div}\varphi \,\dd x \,:\, \varphi\in C^1_c(X;\bbR^d)\,,\, \|\varphi\|_{L^\infty}\leq1  \,\right\}<\infty\,.
\]
\vspace{0\baselineskip}
\end{mydef}

\begin{remark}\label{rem:defvar}
If $E\subset\bbR^d$ is a set of finite perimeter in $X$ it is possible to define a finite vector valued Radon measure $D\chi_E$ on $A$ such that
\[
\int_{\bbR^d} \varphi \,\dd D\chi_E=\int_E \mathrm{div}\varphi \,\dd x
\]
for all $\varphi\in C^1_c(X;\bbR^d)$. \vspace{0.8\baselineskip}
\end{remark}

\begin{mydef}
Let $X\subset\bbR^d$ be an open set and let $u\in L^1(X;\{\pm1\})$ with $\|u\|_{L^1(X)}<\infty$.
We say that $u$ is of \emph{bounded variation} in $X$, and we write $u\in BV(X;\{\pm1\}),$ if $\{u=1\}:=\{ x\in X \,:\, u(x)=1\}$ has finite perimeter in $X$. \vspace{0.5\baselineskip}
\end{mydef}

\begin{mydef}\label{eq:defnormal}
Let $E\subset\bbR^d$ be a set of finite perimeter in the open set $X\subset\bbR^d$. We define $\partial^* E$, the \emph{reduced boundary} of $E$, as the set of points $x\in\bbR^d$ for which the limit
\[
\nu_E(x):=-\lim_{r\to0}\frac{D\chi_E(x+rQ)}{|D\chi_E|(x+rQ)}
\]
exists and is such that $|\nu_E(x)|=1$. Here $Q$ denotes the unit cube of $\bbR^d$ centered at the origin with sides parallel to the coordinate axes. The vector $\nu_E(x)$ is called the \emph{measure theoretic exterior normal} to $E$ at~$x$. \vspace{0.5\baselineskip}
\end{mydef}

We now recall the structure theorem for sets of finite perimeter due to De Giorgi, see~\cite[Theorem 3.59]{AFP} for a proof of the following theorem.

\begin{theorem}\label{thm:DeGiorgi}
Let $E\subset\bbR^d$ be a set with finite perimeter in the open set $X\subset\bbR^d$.
Then
\begin{itemize}
\item[(i)] for all $x\in\partial^* E$ the set $E_r:=\frac{E-x}{r}$ converges locally in $L^1(\bbR^d)$ as $r\to0$ to the 
    halfspace orthogonal to $\nu_E(x)$ and not containing $\nu_E(x)$,
\item[(ii)] $D\chi_E=\nu_E\,\cH^{d-1}\res\partial^* E$,
\item[(iii)] the reduced boundary $\partial^* E$ is $\cH^{d-1}$-rectifiable, \emph{i.e.},
there exists Lipschitz functions $f_i:\bbR^{d-1}\to\bbR^d$ such that
\[
\partial^* E=\bigcup_{i=1}^\infty f_i(K_i)\,,
\]
where each $K_i\subset\bbR^{d-1}$ is a compact set.
\end{itemize}
\vspace{0\baselineskip}
\end{theorem}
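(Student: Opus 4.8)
The plan is to prove all three parts simultaneously via a blow-up analysis centred at points of $\partial^* E$, since the existence of the normal $\nu_E(x)$ as a limit of averaged measures is precisely the hypothesis that makes the rescaled sets behave well. Fix $x\in\partial^*E$; after a translation and rotation we may assume $x=0$ and $\nu_E(0)=e_d$. Introduce the rescaled sets $E_r:=E/r$ and observe the scaling $|D\chi_{E_r}|(B(0,R)) = r^{1-d}|D\chi_E|(B(0,rR))$. The first task is to obtain two-sided density estimates: using the relative isoperimetric inequality in balls together with the differential inequality satisfied by $m(\rho):=\Vol(E\cap B(0,\rho))$ (namely that $m'(\rho)$ controls the perimeter of $E$ inside the sphere $\partial B(0,\rho)$), one shows there are constants $0<c\le C$ with $c\rho^{d-1}\le |D\chi_E|(B(0,\rho))\le C\rho^{d-1}$ for all small $\rho$, and correspondingly that the volume fraction $\Vol(E\cap B(0,\rho))/\Vol(B(0,\rho))$ stays bounded away from $0$ and $1$. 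These give uniform local perimeter bounds for the family $\{E_r\}$.

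Second, I would run a compactness-and-rigidity argument. By the $BV$ compactness theorem the uniform perimeter bounds produce, along any sequence $r_k\to 0$, a limit set $F$ of locally finite perimeter with $\chi_{E_{r_k}}\to\chi_F$ in $L^1_{loc}(\bbR^d)$. The defining property of the reduced boundary forces the generalised normal of every such limit to be the constant vector $e_d$: the averaged normals of $E_{r_k}$ over balls converge to $\nu_E(0)$ by hypothesis, and lower semicontinuity of the total variation together with the density bounds transfers this to $F$. The rigidity step is to show that a set whose distributional derivative $D\chi_F$ is a nonnegative measure in the fixed direction $e_d$ must be a halfspace $\{y\cdot e_d\le 0\}$; here one argues that $\chi_F$ depends only on $y_d$ and is monotone, hence is an indicator of a halfspace. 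Because the limit is unique and independent of the subsequence, the full family $E_r$ converges to that halfspace, which is statement (i). A byproduct, after passing to the limit in the rescaled perimeter measures, is the precise density identity $\lim_{\rho\to0}|D\chi_E|(B(0,\rho))/(\omega_{d-1}\rho^{d-1})=1$ at every $x\in\partial^*E$.

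Third, to obtain (ii) I would invoke the Besicovitch differentiation theorem. Polar decomposition of the vector measure gives $D\chi_E=\nu_E\,|D\chi_E|$ with $|\nu_E|=1$ holding $|D\chi_E|$-almost everywhere, and the same differentiation theorem shows that $|D\chi_E|$-almost every point lies in $\partial^*E$; hence $|D\chi_E|$ is concentrated on $\partial^*E$. Combining the density identity $=1$ on $\partial^*E$ with the standard comparison between a Radon measure and Hausdorff measure (upper $(d-1)$-density equal to $1$ forces $|D\chi_E|=\cH^{d-1}\res\partial^*E$) yields $D\chi_E=\nu_E\,\cH^{d-1}\res\partial^*E$. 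Finally, for the rectifiability in (iii), the halfspace blow-up in (i) says that $\partial^*E$ possesses an approximate tangent plane, orthogonal to $\nu_E(x)$, at each of its points; together with the finiteness and positivity of the $(d-1)$-density this lets one cover $\cH^{d-1}$-almost all of $\partial^*E$ by countably many Lipschitz graphs $f_i(K_i)$, giving the claimed rectifiability.

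The hardest part will be the blow-up and rigidity step. Obtaining the two-sided density estimates already requires the relative isoperimetric inequality combined with a careful ODE argument for $m(\rho)$, and proving that the blow-up limit is exactly a halfspace, rather than merely a set whose perimeter points in one direction, is the genuine rigidity content of the theorem. Transferring the constancy of the normal to the limit under only $L^1_{loc}$ convergence, and upgrading subsequential convergence to convergence of the whole family $E_r$, are the delicate points that make the argument substantial rather than routine.
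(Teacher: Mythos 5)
This statement is De Giorgi's structure theorem, which the paper does not prove itself but simply quotes, referring to \cite[Theorem 3.59]{AFP} for the proof. Your blow-up sketch --- two-sided density estimates via the relative isoperimetric inequality and the differential inequality for $\Vol(E\cap B(0,\rho))$, $BV$ compactness plus the rigidity step that a limit set with constant-direction normal is a halfspace, Besicovitch differentiation and the density comparison for (ii), and tangent planes plus density bounds for the rectifiability in (iii) --- is exactly the classical argument given in that reference, so your proposal is correct in outline and coincides with the proof the paper relies on.
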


\begin{remark}\label{rem:newdefnormal}
Using the above result it is possible to prove that (see \cite{fonsecamuller93})
\[
\nu_E(x)=-\lim_{r\to0}\frac{D\chi_E(x+rQ)}{r^{d-1}}
\]
for all $x\in\partial^* E$, where $Q$ is a unit cube centred at $0$ with sides parallel to the co-ordinate axis.
\end{remark}

The construction of the recovery sequences in Section \ref{subsec:ConvNLContinuum:Limsup} and Section \ref{subsec:ConvGraph:Limsup} will be done for a special class of functions, that we introduce now.

\begin{mydef}\label{def:polyfun}
We say that a function $u\in L^1(X;\{\pm1\})$ is \emph{polyhedral} if $u=\chi_E-\chi_{X\setminus E}$, where $E\subset X$ is a set whose boundary is a Lipschitz manifold contained in the union of finitely many affine hyperplanes. In particular, $u\in BV(X,\{\pm1\})$. \vspace{0.5\baselineskip}
\end{mydef}

Using the result \cite[Theorem 3.42]{AFP} and the fact that it is possible to approximate every smooth surface with polyhedral sets, it is possible to obtain the following density result.

\begin{theorem}\label{thm:denspoly}
Let $u\in BV(X;\{\pm1\})$. Then there exists a sequence $\{u_n\}_{n=1}^\infty\subset BV(X;\{\pm1\})$ of polyhedral functions such that $u_n\to u$ in $L^1(X)$ and $|Du_n|(X)\to|Du|(X)$. In particular $D u_n\stackrel{{w}^*}{\rightharpoonup}Du$. \vspace{0.5\baselineskip}
\end{theorem}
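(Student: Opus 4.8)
The plan is to reduce the statement to a purely geometric approximation of the set $E:=\{u=1\}$ and then carry out that approximation in two stages: smooth first, polyhedral second. Since $u=\chi_E-\chi_{X\setminus E}=2\chi_E-1$, we have $Du=2D\chi_E$ and $|Du|(X)=2|D\chi_E|(X)$, and any polyhedral set $P$ produces a polyhedral function $\chi_P-\chi_{X\setminus P}$ in the sense of Definition~\ref{def:polyfun}. Hence it suffices to produce polyhedral sets $P_n\subset X$ with $\chi_{P_n}\to\chi_E$ in $L^1(X)$ and $|D\chi_{P_n}|(X)\to|D\chi_E|(X)$; the desired $u_n$ and the conclusion $|Du_n|(X)\to|Du|(X)$ then follow by multiplying by $2$.

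First I would invoke the approximation of sets of finite perimeter by smooth sets, \cite[Theorem 3.42]{AFP}: there exist open sets $E_k$ with smooth boundary such that $\chi_{E_k}\to\chi_E$ in $L^1(X)$ and $|D\chi_{E_k}|(X)\to|D\chi_E|(X)$. The second stage replaces each smooth $E_k$ by a polyhedral set. Because $\partial E_k$ is a smooth compact hypersurface, it can be covered by finitely many charts in each of which $\partial E_k$ is the graph of a smooth function over a hyperplane; replacing each such graph by its piecewise affine interpolant on a triangulation of mesh size $h$ yields a polyhedral hypersurface. As $h\to 0$ the interpolants converge uniformly (so the enclosed sets converge in $L^1$), and, crucially, the \emph{gradients} of the interpolants converge uniformly to the smooth gradient, so the area elements $\sqrt{1+|\nabla(\cdot)|^2}$ converge and $|D\chi_{P_{k,h}}|(X)\to|D\chi_{E_k}|(X)$ as $h\to 0$. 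This produces, for each $k$, polyhedral sets $P_{k,h}$ approximating $E_k$ strictly.

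A diagonal extraction then yields a single sequence of polyhedral sets $P_n$ with $\chi_{P_n}\to\chi_E$ in $L^1(X)$ and $|D\chi_{P_n}|(X)\to|D\chi_E|(X)$. Setting $u_n:=\chi_{P_n}-\chi_{X\setminus P_n}$ gives polyhedral functions with $u_n\to u$ in $L^1(X)$; the matching of total variations follows from the construction together with lower semicontinuity of the perimeter under $L^1$ convergence, which gives $\liminf_n|Du_n|(X)\ge|Du|(X)$ against the upper bound already secured. Finally, $u_n\to u$ in $L^1(X)$ forces $Du_n\to Du$ in the sense of distributions, and this together with $|Du_n|(X)\to|Du|(X)$ (strict convergence in $BV$) upgrades to $Du_n\weakstarto Du$ as measures.

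The main obstacle is the area control in the polyhedral stage: one must avoid the Schwarz-lantern pathology, in which a polyhedral surface inscribed in a smooth one converges pointwise yet has area bounded away from the smooth area. This is exactly why I would approximate locally as graphs over tangent hyperplanes, so that the piecewise-linear normals converge to the smooth normal and the surface measures converge; a secondary technical point is patching the local graph approximations across overlapping charts into a single closed polyhedral boundary, which I would handle with a subordinate partition of unity or a globally compatible triangulation.
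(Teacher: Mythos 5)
Your proposal is correct and follows essentially the same route the paper takes: the paper's own justification is precisely the combination of \cite[Theorem 3.42]{AFP} (strict approximation by smooth sets) with the fact that smooth hypersurfaces can be approximated by polyhedral ones, and your reduction to the set $E=\{u=1\}$, the graph-based piecewise-affine interpolation (correctly avoiding the Schwarz-lantern issue), the diagonal extraction, and the upgrade to weak$^*$ convergence via strict convergence are exactly the details the paper leaves implicit.
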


Finally, we recall a result due to Reshetnvyak in the form we will need in this paper (for a proof of the general case see, for instance, \cite[Theorem 2.38]{AFP}).

\begin{theorem}\label{thm:rese}
Let $\{E_n\}_{n=1}^\infty$ be a sequence of sets of finite perimeter in the open set $X\subset\bbR^d$ such that $D\chi_{E_n}\stackrel{{w}^*}{\rightharpoonup} D\chi_E$ and $|D\chi_{E_n}|(X)\to|D\chi_{E}|(X)$, where $E$ is a set of finite perimeter in $X$.
Let $f:X\times \mathbb{S}^{d-1}\to[0,\infty)$ be an upper semi-continuous function. Then
\[
\limsup_{n\to\infty}\int_{\partial^* E_n\cap X} f\left(\,x,\nu_{E_n}(x)\right)\,\dd\cH^{d-1}(x)
    \leq \int_{\partial^* E\cap X} f\left(\,x,\nu_E(x)\right)\,\dd\cH^{d-1}(x)\,.
\]
\end{theorem}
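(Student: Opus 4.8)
The plan is to recognise the stated inequality as the Reshetnyak upper semicontinuity theorem specialised to the gradient measures $D\chi_{E_n}$, and to deduce it from the Reshetnyak \emph{continuity} statement \cite[Theorem 2.38]{AFP} by approximating the upper semicontinuous integrand from above by continuous functions. So the argument is essentially a translation step followed by a monotone approximation.

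First I would rewrite both integrals as integrals against total variation measures using the structure theorem. By Theorem~\ref{thm:DeGiorgi}(ii) we have $D\chi_{E_n}=\nu_{E_n}\,\cH^{d-1}\res\partial^* E_n$, whence $|D\chi_{E_n}|=\cH^{d-1}\res\partial^* E_n$, so the polar function (Radon--Nikodym derivative) $\frac{D\chi_{E_n}}{|D\chi_{E_n}|}$ equals $\nu_{E_n}$ for $|D\chi_{E_n}|$-a.e.\ $x$ and takes values in $\mathbb{S}^{d-1}$. Therefore
\[
\int_{\partial^* E_n\cap X} f(x,\nu_{E_n}(x))\,\dd\cH^{d-1}(x)=\int_X f\l x,\tfrac{D\chi_{E_n}}{|D\chi_{E_n}|}(x)\r\dd|D\chi_{E_n}|(x),
\]
and likewise for $E$. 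In this form the hypotheses $D\chi_{E_n}\weakstarto D\chi_E$ and $|D\chi_{E_n}|(X)\to|D\chi_E|(X)$ are exactly those required by the Reshetnyak theorem, whose integrand lives on $X\times\mathbb{S}^{d-1}$.

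Next I would approximate $f$ from above. Since $f\colon X\times\mathbb{S}^{d-1}\to[0,\infty)$ is upper semicontinuous and bounded above on the compact sets carrying the mass, the sup-convolutions $f_k(x,\nu):=\sup_{(y,\mu)}\{f(y,\mu)-k(|x-y|+|\nu-\mu|)\}$ form a decreasing sequence of bounded ($k$-Lipschitz) continuous functions with $f_k\downarrow f$ pointwise, the last fact using upper semicontinuity of $f$. By monotonicity, for every $k$ and $n$, $\int_X f(\cdot,\tfrac{D\chi_{E_n}}{|D\chi_{E_n}|})\,\dd|D\chi_{E_n}|\le\int_X f_k(\cdot,\tfrac{D\chi_{E_n}}{|D\chi_{E_n}|})\,\dd|D\chi_{E_n}|$. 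Applying \cite[Theorem 2.38]{AFP} to the bounded continuous integrand $f_k$ gives
\[
\lim_{n\to\infty}\int_X f_k\l\cdot,\tfrac{D\chi_{E_n}}{|D\chi_{E_n}|}\r\dd|D\chi_{E_n}|=\int_X f_k\l\cdot,\tfrac{D\chi_E}{|D\chi_E|}\r\dd|D\chi_E|.
\]
Taking $\limsup_{n\to\infty}$ in the previous inequality bounds the left-hand limsup by the right-hand integral for each $k$; letting $k\to\infty$ and using monotone convergence on the right (valid since $f_1$ is bounded, hence integrable against the finite measure $|D\chi_E|$) collapses $\int_X f_k(\cdot,\nu_E)\,\dd|D\chi_E|$ down to $\int_X f(\cdot,\nu_E)\,\dd|D\chi_E|$, which is the claim.

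The genuine content, and the step I expect to be the main obstacle, is the upper-semicontinuous approximation together with the interchange of limits: this relies on having the continuous majorants $f_k$ \emph{bounded}, which is why I would either assume $f$ bounded above or truncate on the compact subsets of $X$ supporting the measures, and on the convergence of total variations $|D\chi_{E_n}|(X)\to|D\chi_E|(X)$, which rules out loss of mass at $\partial X$ and is precisely what upgrades the one-sided weak-* control into the two-sided estimate delivered by the continuity theorem. The reduction via De Giorgi's theorem and the final monotone passage are routine once these points are secured.
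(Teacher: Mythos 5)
You should first note that the paper never proves this statement: Theorem~\ref{thm:rese} is quoted as a known result of Reshetnyak, with a pointer to \cite[Theorem 2.38]{AFP} "for the general case", so there is no internal proof to compare against. Your derivation is the standard one for this upper-semicontinuity variant: rewrite both sides as integrals of the polar vector of $D\chi_{E_n}$ against $|D\chi_{E_n}|=\cH^{d-1}\res\partial^*E_n$ via De Giorgi's structure theorem, majorise the upper semicontinuous integrand by the decreasing sequence of Lipschitz sup-convolutions $f_k$, pass to the limit in $n$ using the Reshetnyak continuity theorem (nominally \cite[Theorem 2.38]{AFP}, though the continuity statement you actually need is the neighbouring Theorem~2.39 there, stated for \emph{bounded} continuous integrands), and finally send $k\to\infty$ by monotone convergence against the finite measure $|D\chi_E|$. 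Every step of this is sound when $f$ is bounded above. (One cosmetic point: with the paper's sign convention the polar of $D\chi_E$ is $-\nu_E$ rather than $\nu_E$; composing $f$ with the antipodal map in the second variable preserves upper semicontinuity, so this changes nothing.)

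The genuine issue is the boundedness you flag at the end, and your proposed fallback does not repair it. If $f$ is unbounded above, the sup-convolutions are identically $+\infty$; and no truncation can help, because the statement as printed (u.s.c.\ $f$ with no boundedness assumption) is in fact false. Take $X=B(0,1)\subset\bbR^2$, $E=B(0,\tfrac12)$, $E_n=E\cup B(x_n,r_n)$ with $|x_n|=1-2r_n$, $r_n\to0$, and $f(x,\nu)=(1-|x|)^{-2}$. Then $D\chi_{E_n}\weakstarto D\chi_E$ and $|D\chi_{E_n}|(X)=\pi+2\pi r_n\to\pi=|D\chi_E|(X)$, yet the integral of $f$ over $\partial B(x_n,r_n)$ is at least $2\pi r_n\cdot(9r_n^2)^{-1}\to\infty$, while the right-hand side equals $4\pi$: convergence of total variations prevents mass from vanishing at $\partial X$, but it cannot prevent $f$ from blowing up there faster than the escaping mass shrinks. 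Truncating is useless on the correct side of the inequality, since $\int f\,\dd\cH^{d-1}\ge\int (f\wedge M)\,\dd\cH^{d-1}$ on the left. So what your argument proves is the theorem with the extra hypothesis that $f$ be bounded --- which is also what \cite{AFP} proves, and is all the paper needs: the result is only ever applied with $f(x,\nu)=\sigma^{(p)}(x,\nu)\rho(x)$, which is upper semicontinuous by Lemma~\ref{lem:ConNLContinuum:Liminf:contsigma} and bounded above because $\sigma^{(p)}\le\tilde c$ (see Step~3 of the proof of Proposition~\ref{prop:ConvNLContinuum:Compact:PerEquiv}) and $\rho\le c_2$. I would therefore add boundedness to the hypotheses, keep your proof, and delete the remark about truncating on "compact subsets supporting the measures": the measures $|D\chi_{E_n}|$ have no common compact support in $X$, and the unbounded case is a defect of the statement, not of your argument.
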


\begin{remark}
A set of finite perimeter can have a very wild reduced boundary. Since the limiting energy density $\sigma^{(p)}$ depends on the exterior theoretic normal $\nu_u$, it would be very difficult to build an explicit recovery sequence for a set $\{u=1\}$ that is only assumed to have finite perimeter.
For this reason, we consider the family of polyhedral sets, for which we provide an explicit construction, and then we use the density result Theorem \ref{thm:denspoly} together with Theorem \ref{thm:rese} to address the general case.
\end{remark}


\subsection{\texorpdfstring{$\Gamma$}{Gamma}-convergence}\label{sec:Gammaconv}

We recall the basic notions and properties of $\Gamma$-convergence (in metric spaces) we will use in the paper (for a reference, see \cite{braides02,dalmaso93}).

\begin{mydef}
Let $(A,\mathrm{d})$ be a metric space. We say that $F_n:A\to[-\infty,+\infty]$ $\Gamma$-converges to $F:A\to[-\infty,+\infty]$,
and we write $F_n\Gammato F$ or $F=\Glim (d)_{n\to \infty} F_n$, if the following hold true:
\begin{itemize}
\item[(i)] for every $x\in A$ and every $x_n\to x$ we have
\[
F(x)\leq\liminf_{n\to\infty} F_n(x_n)\,;
\]
\item[(ii)] for every $x\in A$ there exists $\{x_n\}_{n=1}^\infty\subset A$ (the so called \emph{recovery sequence}) with $x\to x$ such that
\[
\limsup_{n\to\infty} F_n(x_n)\leq F(x)\,.
\]
\end{itemize} \vspace{0\baselineskip}
\end{mydef}

With a small abuse of notation we will write $\Glim(L^1)$ and $\Glim(TL^1)$ for $\Gamma$-convergence with respect to the $L^1$ metric $d_{L^1}:L^1(X)\times L^1(X)\to [0,\infty)$ and the $TL^1$ metric $d_{TL^1}:TL^1(X)\times TL^1(X)\to [0,\infty)$.

The notion of $\Gamma$-convergence has been designed in order for the following convergence of minimisers and minima result to hold (see for example~\cite{braides02,dalmaso93}).

\begin{theorem}\label{thm:convmin}
Let $(A,\mathrm{d})$ be a metric space and let $F_n\stackrel{\Gamma-(\mathrm{d})}{\longrightarrow} F$, where $F_n$ and $F$ are as in the above definition.
Let $\{\eps_n\}_{n=1}^\infty$ with $\eps_n\to0^+$ as $n\to\infty$ and let $x_n\in A$ be \emph{$\eps_n$-minimizers} for $F_n$, that is
\begin{equation}\label{eq:infFn}
F_n(x_n)\leq \max\left\{\, \inf_A F_n + \frac{1}{\eps_n}\,,\, -\frac{1}{\eps_n} \,\right\}\,.
\end{equation}
Then every cluster point of $\{x_n\}_{n=1}^\infty$ is a minimizer of $F$. \vspace{0.5\baselineskip}
\end{theorem}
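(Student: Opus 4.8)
The plan is to reduce the statement to the single inequality $F(x)\le F(y)$ for a cluster point $x$ and an \emph{arbitrary} competitor $y\in A$; once this is established for all $y$, it says precisely that $x$ minimises $F$. I would first emphasise that no coercivity is assumed, so cluster points need not exist and the assertion is conditional: I fix a cluster point $x$ together with a subsequence $x_{n_k}\to x$, and argue only about it. The two defining properties of $\Gamma$-convergence will play complementary roles — the liminf inequality (i) gives a lower bound on $\liminf_k F_{n_k}(x_{n_k})$ in terms of $F(x)$, the recovery-sequence property (ii) gives an upper bound in terms of $F(y)$, and the near-minimality \eqref{eq:infFn} is the bridge that forces these two bounds to meet.

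Concretely, I would proceed in the following order. For the lower bound, use that any subsequence of a $\Gamma$-convergent sequence $\Gamma$-converges to the same limit (both the liminf and the recovery inequalities are inherited by subsequences, with the inequalities pointing the right way); applying (i) to $F_{n_k}$ along $x_{n_k}\to x$ gives
\[
F(x)\le \liminf_{k\to\infty} F_{n_k}(x_{n_k}).
\]
For the upper bound, fix $y\in A$ and invoke (ii) to produce a recovery sequence $y_n\to y$ with $\limsup_n F_n(y_n)\le F(y)$, so that $\limsup_k F_{n_k}(y_{n_k})\le F(y)$ along the subsequence. The crucial step is then to insert $y_{n_k}$ as a test point into \eqref{eq:infFn}: because $y_{n_k}\in A$ we have $F_{n_k}(y_{n_k})\ge \inf_A F_{n_k}$, and the near-minimality inequality therefore bounds $F_{n_k}(x_{n_k})$ from above by $F_{n_k}(y_{n_k})$ up to the tolerance in \eqref{eq:infFn}, which is negligible in the limit thanks to $\eps_n\to 0^+$. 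Taking $\liminf_k$ on the left and $\limsup_k$ on the right and chaining with the lower bound yields $F(x)\le F(y)$.

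The main obstacle is the bookkeeping in this comparison, and in particular the degenerate possibility that $\inf_A F_{n_k}=-\infty$ — which is exactly why \eqref{eq:infFn} is phrased with a maximum. When the infima are finite the first branch is active and the comparison reads $F_{n_k}(x_{n_k})\le F_{n_k}(y_{n_k})+o(1)$, closing the chain directly. When $\inf_A F_{n_k}=-\infty$ the second branch forces $F_{n_k}(x_{n_k})\le -1/\eps_{n_k}\to -\infty$, so the liminf inequality already gives $F(x)=-\infty\le F(y)$ for every $y$ and the conclusion is immediate. Treating both branches uniformly, and being careful that the $\liminf$/$\limsup$ manipulations remain valid with possibly infinite values so that the finite-case error term cannot obstruct the passage to the limit, is the only delicate point; everything else is a mechanical assembly of the two $\Gamma$-convergence inequalities. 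Since $y\in A$ was arbitrary, the resulting bound $F(x)\le \inf_A F$ shows $x$ is a minimiser, which completes the argument.
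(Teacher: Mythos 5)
Your argument is the standard textbook proof of this result (the paper itself gives no proof, deferring to the cited references of Braides and Dal Maso, where exactly this argument appears), and most of it is handled correctly: the inheritance of both $\Gamma$-inequalities by subsequences, the comparison of $x_{n_k}$ with a recovery sequence $y_{n_k}$ for an arbitrary competitor $y$, and the separate treatment of the branch $\inf_A F_{n_k}=-\infty$. The genuine problem is the step where you declare the tolerance in \eqref{eq:infFn} ``negligible in the limit thanks to $\eps_n\to 0^+$'' and write the first-branch comparison as $F_{n_k}(x_{n_k})\le F_{n_k}(y_{n_k})+o(1)$. As printed, the first branch of \eqref{eq:infFn} is $\inf_A F_n+\frac{1}{\eps_n}$, and $\frac{1}{\eps_n}\to+\infty$, not $0$; so the tolerance is not $o(1)$, your chain only yields $F(x)\le F(y)+\infty$, and the step collapses. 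Moreover, no repair is possible, because the theorem with the hypothesis as printed is false: take $A=\bbR$, $F_n=F$ with $F(x)=x^2$ (a constant sequence, so $F_n\Gammato F$), $\eps_n=\frac1n$, and $x_n=1$ for every $n$; then $F_n(x_n)=1\le n=\inf_A F_n+\frac{1}{\eps_n}$, so \eqref{eq:infFn} holds, yet the cluster point $1$ is not a minimizer of $F$.

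What this reveals is a typo in the paper rather than a flaw in your strategy: the standard definition of $\eps_n$-minimizers (the one in Dal Maso's book, and the one actually needed when the theorem is invoked in Section~\ref{sec:Const} for almost minimizers of $\cG_n^{(p)}+\cK_n$) reads $F_n(x_n)\le \max\{\inf_A F_n+\eps_n,\,-\frac{1}{\eps_n}\}$, with tolerance $\eps_n$ in the first branch; note that Remark~3.7 of the paper propagates the same misprint. Under that corrected hypothesis your proof is complete and is precisely the intended argument: $F(x)\le\liminf_k F_{n_k}(x_{n_k})\le\limsup_k\max\{F_{n_k}(y_{n_k})+\eps_{n_k},-\frac{1}{\eps_{n_k}}\}\le\max\{F(y),-\infty\}=F(y)$ for every $y\in A$. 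You should, however, have flagged explicitly that you were proving this corrected statement, since the ``$o(1)$'' you invoke is incompatible with \eqref{eq:infFn} as literally written.
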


\begin{remark}
The condition defining an $\eps$-minimizer takes into account the fact that the infimum of the functional can be $-\infty$. In the case $\inf_{n\in\bbN}\inf_{A}F_n>-\infty$, condition \eqref{eq:infFn} reduces, for $n$ sufficiently large, to
$F_n(x_n)\leq \inf_A F_n + \frac{1}{\eps_n}$.
\vspace{0.8\baselineskip}
\end{remark}

In the context of this paper we apply Theorem~\ref{thm:convmin} in order to prove Corollary~\ref{cor:MainRes:Constrained}.
In particular, we show that $\cG_n^{(p)}+\cK_n$ $\Gamma$-converges to $\cG_\infty+\cK_\infty$ and satisfies a compactness property.
We note that in general
\[
\Glim_{n\to \infty}(d) (F_n+G_n) \neq \Glim_{n\to \infty}(d) F_n + \Glim_{n\to \infty}(d) G_n.
\]
However, with a suitable strong notion of convergence of $G_n\to G$ we can infer the additivity of $\Gamma$-limits.

\begin{proposition}
\label{prop:contconv2}
Let $(A,d)$ be a metric space and let $F_n\Gammato F$.
Assume $G_n(u_n)\to G(u)$ and $G(u)>-\infty$ for any sequence $u_n\to u$ with $\sup_{n\in \bbN} F_n(u_n)<+\infty$ and $F(u)<+\infty$ then
\[ \Glim_{n\to \infty}(d) (F_n+G_n) = \Glim_{n\to \infty}(d) F_n + \Glim_{n\to \infty}(d) G_n. \]
\end{proposition}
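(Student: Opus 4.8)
The plan is to prove directly that $F_n + G_n \Gammato F + G$, which is precisely the asserted identity $\Glim_{n\to\infty}(d)(F_n+G_n) = \Glim_{n\to\infty}(d) F_n + \Glim_{n\to\infty}(d) G_n$ once $F$ is read as $\Glim F_n$ and $G$ as the (continuous) limit of $G_n$. I would verify the two inequalities in the definition of $\Gamma$-convergence separately, in each case combining the $\Gamma$-convergence $F_n\Gammato F$ with the continuity hypothesis on $G_n$.

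\textbf{Limsup inequality.} Fix $u\in A$; we may assume $F(u)<+\infty$, otherwise there is nothing to prove. Let $\{u_n\}_{n=1}^\infty$ be a recovery sequence for $F_n\Gammato F$, so that $u_n\to u$ and $\limsup_n F_n(u_n)\leq F(u)$. After discarding finitely many terms we may assume $F_n(u_n)<+\infty$ for all $n$, so that $\sup_n F_n(u_n)<+\infty$. Since $u_n\to u$ with this uniform bound and $F(u)<+\infty$, the hypothesis yields $G_n(u_n)\to G(u)$. As the second summand converges, $\limsup_n (F_n+G_n)(u_n)=\limsup_n F_n(u_n)+\lim_n G_n(u_n)\leq F(u)+G(u)=(F+G)(u)$, so $\{u_n\}$ is a recovery sequence for $F_n+G_n$.

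\textbf{Liminf inequality.} Fix $u\in A$ and an arbitrary $u_n\to u$, and set $L:=\liminf_n(F_n+G_n)(u_n)$. If $L=+\infty$ the inequality is trivial, so assume $L<+\infty$ and pass to a subsequence $\{u_{n_k}\}_k$ along which $(F_{n_k}+G_{n_k})(u_{n_k})\to L$. Using that $G_n$ is bounded below by some constant $c$ (the key structural input; in the application $G_n=\cK_n\geq0$ by Assumption~(D1)) we have $F_{n_k}(u_{n_k})\leq (F_{n_k}+G_{n_k})(u_{n_k})-c$, so $M:=\sup_k F_{n_k}(u_{n_k})<+\infty$ because the right-hand side converges. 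The $\Gamma$-liminf inequality for $F_n$, together with the fact that a liminf along a subsequence dominates the liminf along the full sequence, gives $F(u)\leq\liminf_n F_n(u_n)\leq\liminf_k F_{n_k}(u_{n_k})\leq M<+\infty$. With $\sup_k F_{n_k}(u_{n_k})<+\infty$ and $F(u)<+\infty$ now in hand, the continuity hypothesis applies to $\{u_{n_k}\}_k$ and gives $G_{n_k}(u_{n_k})\to G(u)$. Consequently $F_{n_k}(u_{n_k})=(F_{n_k}+G_{n_k})(u_{n_k})-G_{n_k}(u_{n_k})\to L-G(u)$, and a second use of the $\Gamma$-liminf inequality yields $F(u)\leq\liminf_k F_{n_k}(u_{n_k})=L-G(u)$, that is $(F+G)(u)\leq L$, as required.

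The step I expect to be the main obstacle is the boundedness claim $\sup_k F_{n_k}(u_{n_k})<+\infty$ in the liminf inequality: a bound on the combined energy $(F_n+G_n)(u_n)$ controls neither summand by itself, while the continuity hypothesis on $G_n$ is only available once a uniform bound on $F_n(u_n)$ is known, creating an apparent circularity. I would break it exactly as above, by invoking the lower bound on $G_n$ to transfer the bound on the sum to a bound on $F_n$, and only then bootstrapping into the continuity hypothesis. The subordinate technical points — that both the $\Gamma$-liminf inequality and the continuous convergence of $G_n$ descend to subsequences — are routine: the former by comparison with the full sequence, the latter by interleaving $\{u_{n_k}\}_k$ with the recovery sequence of the limsup step, which keeps the $F_n$-energy uniformly bounded along the interleaved full sequence.
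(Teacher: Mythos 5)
The paper states Proposition~\ref{prop:contconv2} without proof --- it only points to \cite[Definition 4.7 and Proposition 6.20]{dalmaso93} for the related notion of continuous convergence --- so there is no in-paper argument to compare against, and your proposal must be judged on its own. Its architecture is the standard one and is executed correctly: the limsup inequality by feeding the recovery sequence for $F_n$ into the continuity hypothesis on $G_n$, the liminf inequality by extracting a subsequence realizing the liminf and transferring bounds. You also handle correctly the two points that are usually fudged: the hypothesis on $G_n$ quantifies over \emph{full} sequences paired with the full families $(F_n)_{n\in\bbN}$, $(G_n)_{n\in\bbN}$, and you restore this pairing by patching finitely many (possibly infinite-energy) terms in the limsup step and by interleaving the subsequence with a finite-energy recovery sequence in the liminf step (both of which tacitly require each $F_n$ to be finite somewhere, a harmless non-degeneracy condition).

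The one genuine deviation is that your liminf argument invokes a hypothesis absent from the statement: a uniform lower bound $G_n\geq c$. You flag it honestly as ``the key structural input'', and you are right that it cannot be dispensed with: read (as both you and the paper's application in Section~\ref{sec:Const} intend) as $F_n+G_n\Gammato F+G$, the proposition is false without it. Take $A=\bbR$, let $F_n(x)=n$ if $x=\nicefrac{1}{n}$ and $F_n(x)=x^2$ otherwise, and let $G_n(x)=-2n$ if $x=\nicefrac{1}{n}$ and $G_n(x)=0$ otherwise. Then $F_n\Gammato F$ with $F(x)=x^2$; any sequence $u_n\to u$ with $\sup_{n\in\bbN}F_n(u_n)<+\infty$ can have $u_n=\nicefrac{1}{n}$ only finitely often, so $G_n(u_n)\to 0=G(u)$ and the hypothesis of the proposition holds with $G\equiv 0$; yet along $u_n=\nicefrac{1}{n}\to 0$ one has $(F_n+G_n)(u_n)=-n\to-\infty$, so the $\Gamma$-liminf inequality for $F+G$ fails at $u=0$. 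Thus some control of $G_n$ from below (at least where $F_n$ blows up) is necessary; your blanket lower bound is the simplest such condition, and it is satisfied in the only place the paper uses the proposition, since $\cK_n\geq 0$ by Assumption~(D1). In short: your proof is correct, but it proves a (necessarily) amended statement rather than the literal one printed in the paper --- which is a defect of the paper's statement, not of your argument.
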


The assumption in the above proposition is similar to the notion of continuous convergence, see~\cite[Definition 4.7 and Proposition 6.20]{dalmaso93}.
In our context continuous convergence is not quite the right concept, indeed the fidelity terms $\cK_n$ defined in Definition \ref{def:fidelityterm} do not continuously converge to $\cK_\infty$.
However we show, in Section~\ref{sec:Const}, that $\cK_n$ does satisfy the assumptions in Proposition~\ref{prop:contconv2}.


\section{Convergence of the Non-Local Continuum Model \label{sec:ConvNLContinuum}}

We first introduce the intermediary functional $\cF_\eps^{(p)}$ that is a non-local continuum approximation of the discrete functional $\cG_n^{(p)}$.

\begin{mydef}
Let $p\geq1$, $\eps>0$, $s_\eps>0$, and let $A\subset X$ be an open and bounded set. Define the functional $\cF^{(p)}_\eps(\cdot,A):L^1(X)\to[0,\infty]$ by
\begin{equation} \label{eq:ConvNLContinuum:Feps}
\cF^{(p)}_\eps(u,A) = \frac{s_\eps}{\eps} \int_{A\times A} \eta_\eps(x-z) |u(x)-u(z)|^p \rho(x)\rho(z) \, \dd x \, \dd z + \frac{1}{\eps} \int_A V(u(x)) \rho(x) \, \dd x\,.
\end{equation}
When $A=X$, we will simply write $\cF^{(p)}_\eps(u)$. \vspace{0.5\baselineskip}
\end{mydef}

This section is devoted to proving the following result.

\begin{theorem}
\label{thm:ConvNLContinuum:ConvNLContinuum}
Let $p\geq 1$, and assume $s_{\eps}\to1$ as $\eps\to0$.
Under conditions (A1), (B1-3) and (C1-3) the following holds:
\begin{itemize}
\item (Compactness) Let $\eps_n\to 0^+$ and $u_n\in L^1(X,\mu)$ satisfy $\sup_{n\in \bbN} \cF_{\eps_n}^{(p)}(u_n)<\infty$, then $u_n$ is relatively compact in $L^1(X,\mu)$ and each cluster point $u$ has $\cG_\infty^{(p)}(u)<\infty$;
\item ($\Gamma$-convergence) $\Glim_{\eps\to 0}(L^1) \cF_\eps^{(p)} = \cG_\infty^{(p)}$ and furthermore, if $u\in L^1(X,\mu)$ is a polyhedral function then for any $\zeta_\eps\to 0$ the recovery sequence $u_\eps$ can be chosen to satisfy the following: each $u_\eps$ is Lipschitz continuous with $\Lip(u_\eps) = \frac{1}{\zeta_\eps\eps}$ and $u_\eps(x) = u(x)$ for all $x$ satisfying $\dist(x,\partial^*\{u=1\}) > \frac{\eps}{\zeta_\eps}$. 
\end{itemize} \vspace{0\baselineskip}
\end{theorem}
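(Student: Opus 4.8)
The plan is to follow the localization and blow-up strategy of Alberti--Bellettini \cite{alberti98}, adapting it to the $\ell^p$ cost and the variable density $\rho$, and to treat the three assertions—compactness, the $\Gamma$-liminf, and the $\Gamma$-limsup with the quantitative recovery sequence—in turn. For compactness, I would first use $\sup_n \cF^{(p)}_{\eps_n}(u_n)<\infty$ to control the potential term: since $\frac{1}{\eps_n}\int_X V(u_n)\rho\,\dd x$ is bounded and $\rho\geq c_1>0$, we get $\int_X V(u_n)\rho\,\dd x\to 0$, so any $L^1$ cluster point $u$ obeys $V(u)=0$ a.e., i.e. $u\in\{\pm1\}$ a.e. For the precompactness itself I would reproduce the nonlocal modulus-of-continuity estimate of \cite{alberti98}: combining the interaction term (which controls $\int_{X\times X}\eta_\eps(x-z)|u_n(x)-u_n(z)|^p$) with the potential through a truncation $\Phi\circ u_n$ adapted to the wells, one bounds the $L^1$ norm of discrete differences uniformly and applies a Riesz--Fr\'echet--Kolmogorov criterion to extract an $L^1$-convergent subsequence. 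That each cluster point satisfies $\cG^{(p)}_\infty(u)<\infty$ then follows a posteriori from the liminf inequality.

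For the $\Gamma$-liminf I would argue by blow-up. Fix $u_\eps\to u$ with $\liminf \cF^{(p)}_\eps(u_\eps)=:L<\infty$; by the previous step $u\in BV(X;\{\pm1\})$. Along a subsequence realizing $L$, encode the energy as nonnegative measures $\mu_\eps$ on $X$ and pass to a weak${}^*$ limit $\mu_0$. It suffices to show $\frac{\dd\mu_0}{\dd(\cH^{d-1}\res\partial^*\{u=1\})}(x_0)\geq \rho(x_0)\sigma^{(p)}(x_0,\nu_u(x_0))$ at $\cH^{d-1}$-a.e. $x_0\in\partial^*\{u=1\}$. At such a point Theorem~\ref{thm:DeGiorgi} gives that $\{u=1\}$ blows up to the halfspace orthogonal to $\nu:=\nu_u(x_0)$; rescaling $x=x_0+\eps\xi$ turns $\cF^{(p)}_\eps$ restricted to a small cube into (up to the surface factor $\eps^{d-1}$) the expression $\rho(x_0)\,G^{(p)}(v_\eps,\rho(x_0),\cdot)$, where $v_\eps(\xi)=u_\eps(x_0+\eps\xi)$ and I have used continuity of $\rho$ to replace $\rho(x)\rho(z)$ by $\rho(x_0)^2=\rho(x_0)\cdot\rho(x_0)$ (one factor entering $G^{(p)}$ as the coefficient $\lambda=\rho(x_0)$, the other factoring out) and $\rho(x)$ by $\rho(x_0)$. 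The rescaled functions converge to an admissible transition profile, so the infimum defining $\sigma^{(p)}$ bounds the localized energy from below, yielding the claim after dividing by the cross-sectional area.

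For the $\Gamma$-limsup together with the quantitative recovery sequence I would first treat a polyhedral $u=\chi_E-\chi_{X\setminus E}$ face by face. On each flat piece of $\partial E$ with normal $\nu$ I pick, for the relevant $x_0$, a competitor $w\in\cU(C,\nu)$ with $\frac{1}{\cH^{d-1}(C)}G^{(p)}(w,\rho(x_0),T_C)$ within $\delta$ of $\sigma^{(p)}(x_0,\nu)$, truncate $w$ so that it equals $\pm1$ beyond rescaled distance $1/\zeta_\eps$ from the interface, and mollify it to be Lipschitz; then I set $u_\eps(x)=w((x-x_0)/\eps)$ near that face and glue across faces with a partition of unity. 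Cutting $w$ off at rescaled distance $1/\zeta_\eps$ forces $u_\eps(x)=u(x)$ once $\dist(x,\partial^*\{u=1\})>\eps/\zeta_\eps$, while the mollification gives $\Lip(u_\eps)=\frac{1}{\zeta_\eps\eps}$; since $\zeta_\eps\to0$ the truncation error vanishes and $\cF^{(p)}_\eps(u_\eps)\to\cG^{(p)}_\infty(u)$. A general $u\in BV(X;\{\pm1\})$ is then handled by approximating it with polyhedral functions via Theorem~\ref{thm:denspoly} and passing to the limit through the upper semicontinuity of Theorem~\ref{thm:rese} with a diagonal argument; the quantitative properties are asserted only in the polyhedral case, so no additional care is needed for general $u$.

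The step I expect to be the main obstacle is the $\Gamma$-liminf. Making the nonlocal blow-up rigorous—controlling the interaction energy on scale $\eps$ while the kernel couples points at distance $\sim\eps$, and verifying that the rescaled profiles are genuinely admissible in the cell problem (the correct far-field limits $\pm1$ and $C$-periodicity in the tangential directions)—is the crux, together with checking that continuity of $\rho$ alone suffices to freeze the density at $\rho(x_0)$ without losing the lower bound.
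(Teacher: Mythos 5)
Your overall architecture coincides with the paper's: the liminf via the Fonseca--M\"uller blow-up method, and the limsup via explicit constructions for polyhedral functions (near-optimal cell competitors, truncation, mollification) followed by Theorem~\ref{thm:denspoly}, Theorem~\ref{thm:rese} and a diagonal argument. The one place you take a genuinely different route is compactness: the paper does not redo the Alberti--Bellettini estimates for general $p$ and $\rho$. Instead it sets $v_n=\mathrm{sign}(u_n)$, shows $\|u_n-v_n\|_{L^1}\to0$ and $\sup_n\cF^{(p)}_{\eps_n}(v_n)<\infty$ using (B2)--(B3), and observes that for $\pm1$-valued functions $\cF^{(p)}_{\eps_n}(v_n)=2^{p-2}s_n\cE_{\eps_n}(v_n)$, so Theorem~\ref{thm:ConvNLContinuum:AB:AB} applies verbatim; the finiteness $\cG^{(p)}_\infty(u)<\infty$ at cluster points then follows from Proposition~\ref{prop:ConvNLContinuum:Compact:PerEquiv}, \emph{before} and independently of the liminf inequality.

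That last point is where your proposal has a genuine gap: as structured, it is circular. In your compactness step you prove only that cluster points are $\pm1$-valued, and you defer ``$\cG^{(p)}_\infty(u)<\infty$'' (equivalently, finite perimeter of $\{u=1\}$) to the liminf inequality; but your liminf proof opens with ``by the previous step $u\in BV(X;\{\pm1\})$''. The blow-up argument cannot even be set up without finite perimeter: one needs $\partial^*\{u=1\}$ rectifiable so that $\theta=\cH^{d-1}\res\partial^*\{u=1\}$ is a finite measure, Theorem~\ref{thm:DeGiorgi} applies, and the density $\dd\lambda/\dd\theta$ makes sense. For $u\notin BV(X;\{\pm1\})$ the liminf inequality \emph{is} the assertion that no bounded-energy sequence converges to $u$, i.e.\ it is exactly the compactness finiteness claim --- so neither of your two steps actually proves it. The fix is to obtain the BV property of cluster points inside the compactness step itself, as the paper does via the sign trick (or as Alberti--Bellettini do: their compactness theorem already asserts $\cE_0(u)<\infty$ for cluster points). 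Relatedly, a plain Riesz--Fr\'echet--Kolmogorov argument does not suffice as stated, since the energy controls $L^1$-differences only at scales of order $\eps_n$, which degenerate; this is precisely why the Alberti--Bellettini proof is more delicate than a direct equicontinuity estimate. Finally, the admissibility issue you correctly flag as the main obstacle in the liminf (far-field limits and tangential periodicity of the rescaled profiles, plus the locality defect of the nonlocal energy) is resolved in the paper by replacing the blown-up function outside the cube $Q$ by the half-space profile $v_{\bar x}$, extending periodically, and showing the resulting nonlocal deficits vanish via Proposition~\ref{prop:nonlocdef} together with a $t\to1$ localisation; your proposal identifies this difficulty but leaves it unaddressed.
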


See Section~\ref{subsec:ConvNLContinuum:Compact} for compactness and Sections~\ref{subsec:ConvNLContinuum:Liminf} and~\ref{subsec:ConvNLContinuum:Limsup} for the $\Gamma$-convergence.

The result of Theorem \ref{thm:ConvNLContinuum:ConvNLContinuum} is a generalization of a result by Alberti and Bellettini (see \cite{alberti98}) that we are going to recall for the reader's convenience.
First, we introduce notation for the special case when $p=2$ and $\rho\equiv1$ on $X$.

\begin{mydef}
Let $\eps>0$, and define the functionals $\cE_\eps, \cE_0: L^1(X)\to[0,\infty]$ by
\begin{align*}
\cE_\eps(u) & := \frac{1}{\eps} \int_X\int_X \eta_\eps(x-z) |u(x) - u(z)|^2 \, \dd x \, \dd z + \frac{1}{\eps} \int_X V(u(x)) \, \dd x 
\\
& \nonumber\\
\cE_0(u) & := \lb \begin{array}{ll} \displaystyle\int_{\partial^*\{u=1\}} \hat{\sigma}(\nu_u(x)) \, \dd \cH^{d-1}(x) & \text{if } u\in BV(X,\{\pm 1\})\,, \\
&\\
+ \infty & \text{else}\,,
\end{array} \rd \notag
\end{align*}
respectively, where
\begin{align*}
\hat{\sigma}(\nu) & := \min\lb E(f;\nu) \, | \, f:\bbR\to \bbR, \lim_{t\to \infty} f(t) = 1, \lim_{t\to-\infty} f(t) = -1 \rb \\
E(f;\nu) & := \int_{-\infty}^\infty \int_{\bbR^d} \eta(h) |f(t+h_\nu) - f(t)|^2 \, \dd h \, \dd t + \int_{-\infty}^\infty V(f(t)) \, \dd t \nonumber
\end{align*}
and $h_\nu := h \cdot \nu$. \vspace{0.5\baselineskip}
\end{mydef}

\begin{remark}
We make the following observations.
\begin{enumerate}
\item The fact that there exists a minimizer of $\hat{\sigma}(\nu)$ follows from~\cite[Theorem 2.4]{alberti98a}.
\item The minimum in $\hat{\sigma}(\nu)$ can be taken over non-decreasing functions with $tf(t)\geq 0$ for all $t\in \bbR$.
\item If $\eta$ is isotropic, \emph{i.e.}, $\eta(h)=\eta(|h|)$, then $E(f;\nu)$ is independent of $\nu$ and therefore $\sigma(\nu)$ is a constant and the functional $\cE_0$ is a multiple of the perimeter $\cH^{d-1}(\partial^*\{u=1\})$.
\end{enumerate} \vspace{0\baselineskip}
\end{remark}

The following theorem summerizes two results, namely ~\cite[Theorem 1.4]{alberti98} and~\cite[Theorem 3.3]{alberti98a}.

\begin{theorem}
\label{thm:ConvNLContinuum:AB:AB}
Assume that $X\subset\bbR^d$ is open, $V$ satisfies conditions (B1-3), and $\eta$ satisfies conditions (C1-3).
Then the following holds:
\begin{itemize}
\item (Compactness) Any sequence $\eps_n\to 0^+$ and $\{u_n\}\subset L^1(X)$ with $\sup_{n\in \bbN}\cE_{\eps_n}(u_n)<\infty$ is relatively compact in $L^1(X)$, furthermore any cluster point $u$ satisfies $\cE_0(u)<\infty$;
\item ($\Gamma$-convergence) $\Glim_{\eps\to 0}(L^1) \cE_\eps = \cE_0$.
\end{itemize} \vspace{0\baselineskip}
\end{theorem}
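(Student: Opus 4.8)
The plan is to prove the two assertions — compactness and the two $\Gamma$-convergence inequalities — separately, adapting the classical Modica--Mortola scheme to the nonlocal energy $\cE_\eps$ by means of slicing and a blow-up analysis; this reproduces the arguments of Alberti and Bellettini.

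\emph{Compactness.} Suppose $\sup_n \cE_{\eps_n}(u_n)\le\Lambda<\infty$. First I would exploit (B3): since $\frac{1}{\eps_n}\int_X V(u_n)\,\dd x\le\Lambda$ we obtain $\int_{\{|u_n|\ge R_V\}}|u_n|\,\dd x\le\tau^{-1}\int_X V(u_n)\,\dd x\to 0$, so the mass carried at large values is asymptotically negligible and one reduces to equibounded competitors. The key step is then a one-dimensional reduction. After the change of variables $z=x+\eps_n h$ the nonlocal term becomes $\eps_n\int_X\int_{\bbR^d}\eta(h)\big|\tfrac{u_n(x+\eps_n h)-u_n(x)}{\eps_n}\big|^2\,\dd h\,\dd x$, which is the nonlocal analogue of $\eps_n\int_X|\nabla u_n|^2$. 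Slicing in a fixed direction $e$ with $\eta(e)>0$ (available by (C1)) and combining the resulting one-dimensional difference quotients with $\frac{1}{\eps_n}\int_X V(u_n)\,\dd x$ through a Modica--Mortola--type inequality, I would control the total variation along almost every slice of $\Phi\circ u_n$, where $\Phi$ is a bounded increasing primitive of $\sqrt{V}$. Integrating over slices yields a uniform bound on $|D(\Phi\circ u_n)|(X)$; BV compactness then produces an $L^1$-convergent subsequence, and since $V(u)\to 0$ forces the limit into $\{\pm1\}$ a.e., the cluster point lies in $BV(X;\{\pm1\})$ with $\cE_0(u)<\infty$.

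\emph{Lower bound.} I would represent the energies as measures $\mu_n:=\cE_{\eps_n}(u_n,\cdot)$ on $X$ and, up to subsequence, assume $\mu_n\weakstarto\mu$; the goal is $\mu\ge\hat{\sigma}(\nu_u)\,\cH^{d-1}\res\partial^*\{u=1\}$. Fixing $x_0\in\partial^*\{u=1\}$ and blowing up, the structure theorem (Theorem \ref{thm:DeGiorgi}(i)) gives that the rescaled sets $\frac{\{u=1\}-x_0}{r}$ converge in $L^1_{\mathrm{loc}}$ to the half-space with normal $\nu_u(x_0)$. Computing the blow-up density of $\mu$ at $x_0$ and slicing the nonlocal interaction in the direction $\nu_u(x_0)$, I would bound this density from below by the one-dimensional optimal-profile energy, which is exactly $\hat{\sigma}(\nu_u(x_0))$ by the definitions of $E(\cdot;\nu)$ and $\hat{\sigma}$. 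The Besicovitch derivation theorem upgrades the pointwise density bound to the measure inequality, giving $\liminf_n \cE_{\eps_n}(u_n)\ge\cE_0(u)$. For the upper bound, by Theorem \ref{thm:denspoly} and the upper semicontinuity packaged in Reshetnyak's Theorem \ref{thm:rese} it suffices to construct a recovery sequence for a polyhedral $u$ (Definition \ref{def:polyfun}): away from the $(d-2)$-skeleton each point faces a single flat interface with normal $\nu$, where I would glue in the optimal profile $f$ attaining $\hat{\sigma}(\nu)$ by setting $u_\eps(x):=f\big((x-x_0)\cdot\nu/\eps\big)$ in a tube about that face. Each face then contributes $\hat{\sigma}(\nu)\,\cH^{d-1}(\text{face})$ in the limit, while the overlaps near edges and corners occupy a region of measure $O(\eps)$ and are negligible; the general $u\in BV(X;\{\pm1\})$ follows by approximating with polyhedral $u_k$ (so $Du_k\weakstarto Du$, $|Du_k|(X)\to|Du|(X)$), using the upper semicontinuity of $\nu\mapsto\hat{\sigma}(\nu)$, and diagonalising.

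\emph{Main obstacle.} The delicate point is the lower bound, precisely because the interaction range $\eps R_\eta$ of the kernel $\eta_\eps$ is of the same order as the width of the optimal transition layer. The energy therefore cannot be localised by merely restricting the domain of integration: the double integral couples points on opposite sides of the interface across a neighbourhood whose thickness is comparable to the blow-up scale. Making the slicing rigorous — controlling the cross terms produced by the nonlocal kernel under blow-up and verifying that in the limit one recovers exactly the one-dimensional functional $E(\cdot;\nu)$ rather than a strictly smaller quantity — is the technical heart of the argument, and is where the evenness (C2) together with the continuity and compact support of $\eta$ from (C1) and (C3) are essential.
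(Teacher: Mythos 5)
First, note that there is no proof of this theorem in the paper to compare against: the statement is imported verbatim as a summary of \cite[Theorem 1.4]{alberti98} and \cite[Theorem 3.3]{alberti98a}, and is then used as a black box (most notably in the compactness proof of Section~\ref{subsec:ConvNLContinuum:Compact}). Measured against the cited Alberti--Bellettini argument, your liminf and limsup sketches do follow the same route: energy measures and blow-up at $\cH^{d-1}$-a.e.\ point of $\partial^*\{u=1\}$, reduction to the one-dimensional profile problem defining $\hat{\sigma}$, recovery sequences for polyhedral functions glued from truncated optimal profiles, then Theorem~\ref{thm:denspoly} and Theorem~\ref{thm:rese} with a diagonalisation; you also correctly identify the control of the nonlocal cross terms (what the paper encapsulates in Proposition~\ref{prop:nonlocdef}) as the technical heart of those two steps.

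The compactness argument, however, contains a genuine gap. You claim that slicing plus a Modica--Mortola product inequality yields ``a uniform bound on $|D(\Phi\circ u_n)|(X)$'', and you then invoke BV compactness. No such bound can hold: the energy $\cE_{\eps_n}$ is blind to oscillations of $u_n$ at scales much smaller than $\eps_n$, while the total variation is not. Concretely, in $d=1$ take $X=(0,1)$ and $u_n=-1$ except on $N_n$ disjoint intervals of length $\delta_n$ where $u_n=+1$, with $N_n\to\infty$ and $N_n\delta_n/\eps_n\to0$. Then $V(u_n)\equiv0$, and since $\{x:u_n(x)\neq u_n(x+\eps_n h)\}$ has measure at most $2N_n\delta_n$, the nonlocal term is at most $8\|\eta\|_{L^1}N_n\delta_n/\eps_n\to0$; yet $|D(\Phi\circ u_n)|(X)=2N_n\int_{-1}^{1}\sqrt{V(s)}\,\dd s\to\infty$ (equivalently, the perimeter of $\{u_n=1\}$ blows up while the energy vanishes). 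This does not contradict the theorem --- here $u_n\to-1$ in $L^1$ --- but it defeats your route to it, already slice by slice. The correct mechanism, which is what the cited papers use and what this paper mimics for $\cF^{(p)}_\eps$ in Section~\ref{subsec:ConvNLContinuum:Compact}, is: first replace $u_n$ by $v_n=\mathrm{sign}(u_n)$, using (B2)--(B3) to show $\|u_n-v_n\|_{L^1}\to0$ and that the energy bound survives; for $\pm1$-valued functions the nonlocal term is a nonlocal perimeter, and relative compactness in $L^1$ then follows from an $L^1$-translation estimate of the form $\int_X|v_n(x+h)-v_n(x)|\,\dd x\leq C(|h|+\eps_n)$ combined with a Riesz--Fr\'echet--Kolmogorov argument (equivalently, from a BV bound on the mollified functions $v_n*\eta_{\eps_n}$, which are $L^1$-close to $v_n$) --- i.e.\ the quantity bounded in BV is a regularisation of $v_n$ at scale $\eps_n$, never $\Phi\circ u_n$ itself. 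Finiteness of $\cE_0$ at cluster points is then obtained in the limit (via the liminf inequality or lower semicontinuity of the perimeter), not from a BV bound along the sequence.
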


In fact the above theorem is true if (C1) is relaxed to $\eta\geq 0$ (i.e. positivity and continuity at the origin are not needed).  
The proof of Theorem \ref{thm:ConvNLContinuum:ConvNLContinuum} is based on the proof of \cite[Theorem 1.4]{alberti98}, where we 
have to deal with the fact that, in our case, we are considering a generic exponent $p\geq 1$, and that we have a density $\rho$.
The compactness proof is in Section~\ref{subsec:ConvNLContinuum:Compact} and the $\Gamma$-convergence in Sections~\ref{subsec:ConvNLContinuum:Liminf}-\ref{subsec:ConvNLContinuum:Limsup}.


\subsection{Compactness \label{subsec:ConvNLContinuum:Compact}}

The aim of this section is to show that any sequence $\{u_n\}_{n=1}^\infty\subset L^1(X,\mu)$ with $\sup_{n\in \bbN}\cF_{\eps_n}^{(p)}(u_n) < \infty$ is relatively compact in $L^1(X,\mu)$ and that $\cG_\infty^{(p)}(u)<\infty$ for any cluster point $u\in L^1(X,\mu)$.
This will prove the first part of Theorem~\ref{thm:ConvNLContinuum:ConvNLContinuum}.

The strategy of the proof is to apply the Alberti and Bellettini compactness result in Theorem~\ref{thm:ConvNLContinuum:AB:AB}.
When $p=2$ this follows from the upper and lower bounds on $\rho$ that imply an `equivalence' between $\cE_{\eps_n}$ and $\cF_{\eps_n}^{(2)}$.
When $p\neq 2$ we approximate $u_n$ with a sequence $v_n$ satisfying $v_n(x)\in\{\pm 1\}$ then since
$|v_n(x)-v_n(z)|^2=2^{2-p} |v_n(x)-v_n(z)|^p$ we can easily find an equivalence between $\cE_{\eps_n}$ and $\cF_{\eps_n}^{(p)}$.
We start with the preliminary result that shows $\cE_0(u)<\infty \Leftrightarrow \cG_\infty^{(p)}(u)<\infty$.

\begin{proposition}
\label{prop:ConvNLContinuum:Compact:PerEquiv}
Let $X\subset \bbR^d$ be open and bounded, and let $u\in L^1(X;\{\pm 1\})$.
Under assumptions (A1), (B1-2), (C1,3) we have
\begin{align*}
\int_{\partial^*\{u=1\}}\hat{\sigma}(\nu_u(x)) \, \dd \cH^{d-1}(x) < +\infty \quad & \Leftrightarrow \quad \cH^{d-1}(\partial^*\{u=1\})<+\infty \\
 & \Rightarrow \quad \int_{\partial^*\{u=1\}} \sigma^{(p)}(x,\nu_u(x)) \rho(x) \, \dd \cH^{d-1}(x) < +\infty.
\end{align*} \vspace{0\baselineskip}
\end{proposition}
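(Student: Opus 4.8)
The plan is to show that all three integrals are comparable to the perimeter $\cH^{d-1}(\partial^*\{u=1\})$, by proving that the energy densities $\hat\sigma$ and $\sigma^{(p)}$ are bounded between positive constants uniformly in their arguments. Concretely, I would establish: (a) $0<c\le\hat\sigma(\nu)\le C<\infty$ for all $\nu\in\mathbb{S}^{d-1}$; and (b) $\sigma^{(p)}(x,\nu)\le C'<\infty$ for all $x\in X$ and $\nu\in\mathbb{S}^{d-1}$. Given these, the two-sided bound $c\,\cH^{d-1}(\partial^*\{u=1\})\le\int_{\partial^*\{u=1\}}\hat\sigma(\nu_u)\,\dd\cH^{d-1}\le C\,\cH^{d-1}(\partial^*\{u=1\})$ yields the equivalence of the first two statements, while $\int_{\partial^*\{u=1\}}\sigma^{(p)}(x,\nu_u)\rho\,\dd\cH^{d-1}\le C'c_2\,\cH^{d-1}(\partial^*\{u=1\})$, using $\rho\le c_2$ from (A1), gives the implication to the third. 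Note that only the upper bound on $\sigma^{(p)}$ is needed, not a matching lower bound.

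For the upper bounds I would fix once and for all a one-dimensional transition profile $f:\bbR\to[-1,1]$, nondecreasing and Lipschitz with constant $L$, equal to $-1$ on $(-\infty,-1]$ and to $1$ on $[1,\infty)$. For $\hat\sigma$ I test $E(\cdot\,;\nu)$ with this $f$; for $\sigma^{(p)}$ I test the cell problem with the competitor $u(z):=f(z\cdot\nu)$ on a unit cube $C\in\cC(x,\nu)$, which lies in $\cU(C,\nu)$ since it is constant in the directions of $\nu^\perp$ (hence $C$-periodic) and has the correct limits. Writing points of $T_C=\{y+t\nu:y\in C,\,t\in\bbR\}$ and applying Fubini, the factor $\cH^{d-1}(C)$ cancels and both quantities reduce to the one-dimensional estimate $\int_\bbR|f(t+a)-f(t)|^q\,\dd t\le(2+|a|)\min\{2^q,L^q|a|^q\}$ (with $q=2$ or $q=p$), valid because $f$ differs from a constant only on an interval of length $2+|a|$. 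Since $a=h\cdot\nu$ satisfies $|a|\le|h|$, this is dominated by a $\nu$-independent quantity, and after integrating against $\eta$ it is controlled by $C\int_{\bbR^d}\eta(h)(1+|h|)\,\dd h<\infty$, finite by the bounded-support and moment properties of $\eta$ (cf. Remark~\ref{rem:eta}); the potential term $\int_\bbR V(f(t))\,\dd t$ is finite since $V$ is continuous, hence bounded on $[-1,1]$ by (B1), and vanishes where $f=\pm1$ by (B2). Both bounds are manifestly uniform in $\nu$, and the $\sigma^{(p)}$ bound depends on $x$ only through $\rho(x)\le c_2$.

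The delicate point, which I expect to be the crux, is the uniform lower bound $\hat\sigma(\nu)\ge c>0$, i.e.\ the nondegeneracy of the surface tension. I would obtain it by domination by an isotropic kernel rather than a direct trade-off computation. By (C1) there exist $\delta>0$ and $m>0$ with $\eta\ge m$ on $B(0,\delta)$, so setting $\tilde\eta:=m\chi_{B(0,\delta)}$ we have $\tilde\eta\le\eta$ pointwise, whence $E(f;\nu)\ge E_{\tilde\eta}(f;\nu)$ for every admissible $f$ and therefore $\hat\sigma(\nu)\ge\hat\sigma_{\tilde\eta}(\nu)$, where $\hat\sigma_{\tilde\eta},E_{\tilde\eta}$ denote the surface tension and energy built from $\tilde\eta$. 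Since $\tilde\eta$ is isotropic, $\hat\sigma_{\tilde\eta}(\nu)$ is independent of $\nu$ (by the isotropy remark following the definition of $\cE_0$). Moreover $\tilde\eta$ still satisfies (C1--3), so the problem defining $\hat\sigma_{\tilde\eta}$ admits a minimizer $f^*$; if $E_{\tilde\eta}(f^*;\nu)=0$ then the potential term forces $f^*\in\{\pm1\}$ a.e.\ and the nonlocal term, with $\tilde\eta$ strictly positive on a ball, forces $f^*$ to be essentially constant, contradicting its limits $\pm1$. Hence $\hat\sigma_{\tilde\eta}=E_{\tilde\eta}(f^*;\nu)>0$, giving $\hat\sigma(\nu)\ge\hat\sigma_{\tilde\eta}=:c>0$ for all $\nu$. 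This isotropic-domination step reduces anisotropic positivity to the known scalar case, and the remaining bookkeeping is routine.
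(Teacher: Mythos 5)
Your proof is correct, and its skeleton is the same as the paper's: uniform bounds $0<c\le\hat{\sigma}(\nu)\le C$ together with $\sigma^{(p)}(x,\nu)\le C'$ and $\rho\le c_2$, with only the upper bound needed for $\sigma^{(p)}$. The differences lie in how the bounds are obtained. For the upper bounds the paper tests with the sharp jump profile $\hat{f}=\mathrm{sign}$, which gives directly $E(\hat{f};\nu)=\int_{\bbR^d}\eta(h)|h\cdot\nu|\,\dd h\le R_\eta\|\eta\|_{L^1(\bbR^d)}$ and the analogous cell-problem bound; your Lipschitz ramp does the same job with a bit more bookkeeping. (Like the paper, you implicitly need $\eta\in L^1$; note that your citation of Remark~\ref{rem:eta} is slightly off, since that remark invokes (C4), which the proposition does not assume, but $\int_{\bbR^d}\eta(h)(1+|h|)\,\dd h\le(1+R_\eta)\|\eta\|_{L^1(\bbR^d)}$ follows from (C3) once $\eta\in L^1$.) The genuine divergence is the lower bound, which you correctly identify as the crux. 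The paper restricts to nondecreasing profiles with $tf(t)\ge0$ (licensed by the remark following the definition of $\hat{\sigma}$) and argues quantitatively by cases: if $f(\tfrac{a}{2})\le\tfrac12$ the potential term is at least $\tfrac{a}{2}\inf_{[0,1/2]}V>0$; otherwise the nonlocal term is bounded below by an explicit $\nu$-independent constant, using $\eta\ge c$ on $B(0,a)$ from (C1). You instead dominate $\eta\ge m\chi_{B(0,\delta)}$, pass to the isotropic surface tension $\hat{\sigma}_{\tilde{\eta}}$, and prove its positivity via existence of an optimal profile plus a rigidity argument (zero energy forces values in $\{\pm1\}$ a.e.\ and then a.e.\ constancy, contradicting the limits at $\pm\infty$). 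Your route is softer, avoids the two-case computation, and isolates exactly where (C1) enters, but it is non-quantitative and leans on the Alberti--Bellettini existence theorem — legitimate here, since truncation to $[-1,1]$ decreases the energy so (B3) is not actually needed even though the proposition assumes only (B1-2) — whereas the paper's argument is self-contained apart from the monotone-rearrangement remark and yields explicit constants.
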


\begin{remark}
The above proposition implies that
\[ \cE_0(u) < +\infty \quad \Leftrightarrow \quad u\in BV(X;\{\pm 1\}) \quad \Leftrightarrow \quad \cG_\infty^{(p)}(u)< +\infty \]
since by definition of $\cG_\infty^{(p)}$ if $\cG_\infty^{(p)}(u)<+\infty$ then $u\in BV(X;\{\pm 1\})$. \vspace{0.8\baselineskip}
\end{remark}

\begin{remark}
The missing implication,
\[ \int_{\partial^*\{u=1\}} \sigma^{(p)}(x,\nu_u(x)) \rho(x) \, \dd \cH^{d-1}(x) < +\infty \implies \cH^{d-1}(\partial^*\{u=1\})<+\infty \,, \]
is also true.
One can show that the minimisation problem in $\sigma^{(p)}(x,\nu)$ (for any $x\in X$ and $\nu\in \mathbb{S}^{d-1}$) can be be reduced to a minimisation problem over functions $f:\bbR\to \bbR$ similar to $\hat{\sigma}(\nu)$ (but with an additional $x$ dependence); see Lemma~\ref{lem:ConvNLContinuum:Preliminary:Cell}.
Since the missing implication is not needed then we do not include it.
\vspace{0.8\baselineskip}
\end{remark}

\begin{proof}[Proof of Proposition~\ref{prop:ConvNLContinuum:Compact:PerEquiv}]
\emph{Step 1.} We show
\[ \cH^{d-1}(\partial^*\{u=1\})<+\infty \quad \implies \quad \int_{\partial^*\{u=1\}}\hat{\sigma}(\nu_u(x)) \, \dd \cH^{d-1}(x) < +\infty. \]
Choose $\hat{f}(t) = +1 $ if $t\geq 0$ and $\hat{f}(t) = -1$ if $t<0$.
Then,
\[
E(\hat{f};\nu) = \int_{-\infty}^\infty \int_{\bbR^d} \eta(h) |\hat{f}(t+h_\nu) - \hat{f}(t) |^2 \, \dd h \, \dd t = \int_{\bbR^d} \eta(h) |h_\nu| \, \dd h \leq R_\eta \int_{\bbR^d} \eta(h) =: C
\]
where $C\in (0,\infty)$, $h_\nu = h\cdot \nu$ and $R_\eta$ is given by assumption (C3).
Note that $C$ is independent of $\nu$.
So,
\[
\int_{\partial^*\{u=1\}}\hat{\sigma}(\nu_u(x)) \, \dd \cH^{d-1}(x) \leq \int_{\partial^*\{u=1\}} E(\hat{f};\nu_u(x)) \, \dd \cH^{d-1}(x) \leq C \cH^{d-1}(\partial^*\{u=1\}).
\]

\emph{Step 2.} We show
\[ \int_{\partial^*\{u=1\}}\hat{\sigma}(\nu_u(x)) \, \dd \cH^{d-1}(x) < +\infty \quad \implies \quad \cH^{d-1}(\partial^*\{u=1\})<+\infty. \]
Using assumption (C1) it is possible to find $a>0$ and $c>0$ satisfying $\eta(h)\geq c$ for all $|h|\leq a$.
Let $\hat{f}:\bbR\to \bbR$ such that
\begin{equation} \label{eq:ConvNLContinuum:Compact:fAss}
\hat{f} \text{ is non-decreasing,} \quad \lim_{t\to\infty} \hat{f}(t) = -\lim_{t\to-\infty} \hat{f}(t) = 1, \quad \hat{f}(t)t\geq 0 \text{ for } t\in\bbR\,.
\end{equation}
If $\hat{f}(\frac{a}{2})\leq \frac{1}{2}$ then $\hat{f}(t) \in  [0,\frac12)$ for $t\in [0,\frac{a}{2}]$ and
\begin{equation}\label{eq:estimateE1}
E(\hat{f};\nu) \geq \frac{a}{2} \inf_{t\in [0,\frac12]} V(t) =: \tilde{a}_1 >0.
\end{equation}
Otherwise, if $\hat{f}\left(\frac{a}{2}\right)> \frac12$ then for $h\geq \frac{a}{2}$
\[
\int_{\bbR} |\hat{f}(t+h)-\hat{f}(t)|^2 \, \dd t \geq \int_{\frac{a}{2}-h}^0 |\hat{f}(t+h) - \hat{f}(t)|^2 \, \dd t \geq \frac{h-\frac{a}{2}}{4}.
\]
Similarly, if $h\leq -\frac{a}{2}$ we have
\[
\int_{\bbR} |\hat{f}(t+h)-\hat{f}(t)|^2 \, \dd t \geq \frac{\frac{a}{2}-h}{4}\,.
\]
So, for all $|h|\geq\frac{a}{2}$ it holds
\[
\int_{\bbR} |\hat{f}(t+h)-\hat{f}(t)|^2 \, \dd t \geq \frac{|h-\frac{a}{2}|}{4}\,.
\]
Therefore
\begin{align}\label{eq:estimateE2}
E(\hat{f};\nu) & \geq \int_{\bbR^d} \eta(h) \int_{\bbR} |\hat{f}(t+h_\nu) - \hat{f}(t)|^2 \, \dd t \, \dd h \nonumber\\
 & \geq \frac{1}{4} \int_{\{h\in\bbR^d\,:\,|h_\nu|\geq \frac{3a}{4}\}} \eta(h) \la h_\nu-\frac{a}{2} \ra \, \dd h \nonumber\\
 & \geq \frac{ac}{16} \int_{\{h\in\bbR^d\,:\,|h_\nu|\geq \frac{3a}{4}\}} \chi_{B(0,a)} \, \dd h \nonumber\\
&=:\tilde{a}_2 >0\,.
\end{align}
Set $\tilde{a} := \min\{\tilde{a}_1,\tilde{a}_2\}>0$. Using \eqref{eq:estimateE1} and \eqref{eq:estimateE2} we have, for any $f:\bbR\to \bbR$ satisfying~\eqref{eq:ConvNLContinuum:Compact:fAss}, that $E(\hat{f};\nu)\geq \tilde{a}$.
We get
\[
\tilde{a} \cH^{d-1}(\partial^*\{u=1\})\leq \int_{\partial^*\{u=1\}} \hat{\sigma}(\nu_u(x)) \, \dd \cH^{d-1}(x)=\cE_0(u)<\infty\,.
\]

\emph{Step 3.} We show
\[ \cH^{d-1}(\partial^*\{u=1\})<+\infty \quad \implies \quad \int_{\partial^*\{u=1\}} \sigma^{(p)}(x,\nu_u(x)) \rho(x) \, \dd \cH^{d-1}(x) < +\infty. \]
Following the same argument as in the first part of the proof we let $\hat{f}(t) = 1$ for $t\geq 0$ and $\hat{f}(t) = -1$ for $t<0$.
Fix $\nu\in\mathbb{S}^{d-1}$ and let $f_\nu(x) = \hat{f}(x\cdot \nu)$.
For any $\bar{x}\in X$ and $C\in \cC(\bar{x},\nu)$ we clearly have $f_\nu\in \cU(C,\nu)$.
So,
\begin{align*}
\frac{1}{\cH^{d-1}(C)} G^{(p)}(f_\nu,\rho(\bar{x}),T_C) & = \rho(\bar{x})\int_{-\infty}^\infty \int_{\bbR^d} \eta(h) |\hat{f}(t+h_\nu) - \hat{f}(t)|^p \, \dd h \, \dd t \\
 & \leq 2^{p}c_2 \int_{\bbR^d}\eta(h) |h_\nu| \, \dd h \\
 & \leq \tilde{c}
\end{align*}
where $\tilde{c}\in (0,\infty)$ can be chosen to be independent of $\nu$ and in the last step we used assumption (C3).
Then,
\begin{align*}
\int_{\partial^*\{u=1\}} \sigma^{(p)}(x,\nu_u(x)) \rho(x) \, \dd \cH^{d-1}(x) & \leq \int_{\partial^*\{u=1\}} \frac{1}{\cH^{d-1}(C)} G^{(p)}(f_\nu,\rho(\bar{x}),T_C) \rho(\bar{x}) \, \dd \cH^{d-1}(\bar{x}) \\
 & \leq c_2 \tilde{c} \cH^{d-1}(\partial^*\{u=1\}).
\end{align*}
This concludes the proof.
\end{proof}

By the above proposition it is enough to show that any sequence $\{u_n\}_{n=1}^\infty\subset L^1(X,\mu)$ satisfying $\sup_{n\in \bbN}\cF_{\eps_n}(u_n)<+\infty$ is relatively compact and any cluster point $u$ satisfies $\cE_0(u)<\infty$.
We do this by a direct comparison with $\cE_{\eps_n}$.

\begin{proof}[Proof of Theorem~\ref{thm:ConvNLContinuum:ConvNLContinuum} (Compactness)]
Assume $\eps_n\to 0^+$, $s_n := s_{\eps_n} \to 1$ and $\sup_{n\in \bbN}\cF^{(p)}_{\eps_n}(u_n)<+\infty$.
Let
\[
v_n(x) :=\mathrm{sign}(u_n):= \lb \begin{array}{ll} +1 & \text{if } u_n(x) \geq 0 \\ -1 & \text{if } u_n(x)<0. \end{array} \rd
\]
We claim that
\begin{equation} \label{eq:ConvNLContinuum:Compact:unvn} 
\|u_n-v_n\|_{L^1}\to 0,
\end{equation}
\begin{equation} \label{eq:ConvNLContinuum:Compact:Fepsvn} 
\sup_{n\in \bbN} \cF_{\eps_n}^{(p)}(v_n)<+\infty.
\end{equation}

\emph{Step 1.} Let us first prove \eqref{eq:ConvNLContinuum:Compact:unvn}.
Fix $\delta>0$ and let
\begin{align*}
K_n^{(\delta)} & = \lb x\in X \, : \, |u_n(x)| \geq 1+\delta \rb \\
L_n^{(\delta)} & = \lb x\in X \, : \, |u_n(x)|\leq 1-\delta \rb.
\end{align*}
Note that for $x\in X\setminus (K_n^{(\delta)}\cup L_n^{(\delta)})$ we have $|v_n(x)-u_n(x)|\leq \delta$.
Now,
\begin{align*}
\int_X |u_n(x) - v_n(x)| \, \dd x & \leq \delta \Vol(X) + \int_{K_n^{(\delta)}} |u_n(x)-v_n(x)| \, \dd x + \int_{L_n^{(\delta)}} |u_n(x) - v_n(x)| \, \dd x \\
 & \leq \delta \Vol(X) + \int_{K_n^{(\delta)}} |u_n(x)| \, \dd x + \Vol(K_n^{(\delta)}) + 2\Vol(L_n^{(\delta)}).
\end{align*}
Since $V$ is continuous and zero only at $\pm 1$ then there exists $\gamma_\delta>0$ such that $V(t)\geq \gamma_\delta$ for all $t\in (-\infty,-1-\delta)\cup (-1+\delta,1-\delta) \cup (1+\delta,+\infty)$.
Hence $V(u_n(x))\geq \gamma_\delta$ for all $x\in K_n^{(\delta)}\cup L_n^{(\delta)}$.
This implies, 
\[ \Vol(K_n^{(\delta)}) \leq \frac{1}{\gamma_\delta} \int_{K_n^{(\delta)}} V(u_n(x)) \, \dd x \leq \frac{\eps_n}{c_1 \gamma_\delta} \cF_{\eps_n}^{(p)}(u_n). \]
By the same calculation $\Vol(L_n^{(\delta)}) \leq \frac{\eps_n}{c_1\gamma_\delta} \cF_{\eps_n}^{(p)}(u_n)$.
Furthermore,
\begin{align*}
\int_{K_n^{(\delta)}} |u_n(x)| \, \dd x & = \int_{K_n^{(\delta)} \cap \{ |u_n(x)|\leq R_V\}} |u_n(x)| \, \dd x + \int_{\{|u_n(x)|>R_V\}} |u_n(x)| \, \dd x \\
 & \leq R_V \Vol(K_n^{(\delta)}) + \frac{1}{\tau} \int_{\{|u_n(x)|>R_V\}} V(u_n(x)) \, \dd x \\
 & \leq \l\frac{R_V}{\gamma_\delta} + \frac{1}{\tau} \r \frac{\eps_n \cF_{\eps_n}^{(p)}(u_n)}{c_1}.
\end{align*}
So, $\lim_{n\to \infty} \| u_n - v_n\|_{L^1} \leq \delta \Vol(X)$.
Since this is true for all $\delta>0$ then we have $\lim_{n\to \infty} \| u_n - v_n\|_{L^1}=0$ which proves claim \eqref{eq:ConvNLContinuum:Compact:unvn}. \vspace{0.5\baselineskip}

\emph{Step 2.} In order to prove \eqref{eq:ConvNLContinuum:Compact:Fepsvn} we reason as follows.
If $|u_n(x)|\geq \frac12$ then $\mathrm{sign}(u_n(x)) \neq \mathrm{sign}(u_n(y))$ implies $|u_n(x)-u_n(y)|\geq \frac12$.
Now since,
\[ |v_n(x) - v_n(y)| = \lb \begin{array}{ll} 0 & \text{if } \mathrm{sign}(u_n(x)) = \mathrm{sign}(u_n(y)) \\ 2 & \text{if } \mathrm{sign}(u_n(x)) \neq \mathrm{sign}(u_n(y)) \end{array} \rd \]
then $|v_n(x) - v_n(y)|\leq 4 |u_n(x) - u_n(y)|$ when $|u_n(x)|\geq \frac12$.

Let $M_n = \{x\in X \, : \, |u_n(x)|\leq \frac12 \}$.
We have,
\begin{align*}
\cF_{\eps_n}^{(p)}(v_n) & = \frac{s_n}{\eps_n} \int_{M_n\times X} \eta_{\eps_n}(x-z) |v_n(x)-v_n(z)|^p \rho(x) \rho(z) \, \dd x \, \dd z \\
 & \quad \quad \quad \quad + \frac{s_n}{\eps_n} \int_{M_n^c\times X} \eta_{\eps_n}(x-z) |v_n(x) - v_n(y)|^p \rho(x) \rho(y) \, \dd x \, \dd z \\
 & \leq \frac{2^{p}c_2^2 s_n}{\eps_n} \int_{M_n\times X} \eta_{\eps_n}(x-z) \, \dd x \, \dd z \\
 & \quad \quad \quad \quad + \frac{4^p s_n}{\eps_n} \int_{M_n^c\times X} \eta_{\eps_n}(x-z) |u_n(x)-u_n(z)|^p \rho(x) \rho(z) \, \dd x \, \dd z \\
 & \leq \frac{2^{p}c_2^2 s_n}{\eps_n} \int_{\bbR^d} \eta(w) \, \dd w \Vol(M_n) + 4^p \cF_{\eps_n}^{(p)}(u_n).
\end{align*}
Now,
\begin{align*}
\frac{1}{\eps_n} \Vol(M_n) \leq \frac{1}{\gamma\eps_n} \int_{\{|u_n(x)|\leq \frac12\}} V(u_n(x)) \, \dd x \leq \frac{1}{\gamma c_1} \cF_{\eps_n}^{(p)}(u_n)
\end{align*}
where $V(t)\geq \gamma>0$ for all $t\in [-\frac12,\frac12]$.
Hence $\sup_{n\in\bbN} \cF_{\eps_n}^{(p)}(v_n)<+\infty$. \vspace{0.5\baselineskip}

\emph{Step 3.} We conclude the proof by noticing that, since $v_n\in L^1(X;\{\pm 1\})$ we have $\cF_{\eps_n}^{(p)}(v_n) = 2^{p-2} s_n \cE_{\eps_n}(v_n)$.
So $v_n$ is relatively compact in $L^1$ by Theorem~\ref{thm:ConvNLContinuum:AB:AB} and by Proposition~\ref{prop:ConvNLContinuum:Compact:PerEquiv} $\cG_\infty^{(p)}(u)<+\infty$ for any cluster point $u$ of $\{v_n\}_{n=1}^\infty$. Using \eqref{eq:ConvNLContinuum:Compact:unvn} the same holds true for the sequence $\{u_n\}_{n\in\bbN}$.
\end{proof}


\subsection{Preliminary results \label{subsec:ConvNLContinuum:Preliminary}}

Here we prove some technical results needed in the proof of the $\Gamma$-convergence result stated in Theorem \ref{thm:ConvNLContinuum:ConvNLContinuum}.
We start by stating the result that the minimisation in the cell problem $\sigma^{(p)}(x,\nu)$ can be restricted to functions $u$ that only vary in the direction $\nu$.

\begin{lemma}
\label{lem:ConvNLContinuum:Preliminary:Cell}
Fix $\nu\in \bbS^{d-1}$ and let $C\subset \nu^\perp$.
Let $p\geq 1$ and assume (B1-3) and (C1-3) are in force.
Then, for any $\lambda>0$, the minimisation problem
\[ \min \lb \frac{1}{\cH^{d-1}(C)} G^{(p)}(u,\lambda,T_C) \,:\, u\in \cU(C,\nu) \rb \]
has a solution $u^\dagger$ that can be written $u^\dagger(x) = \bar{u}(x\cdot \nu)$ where $\bar{u}:\bbR\to [-1,1]$ is increasing.
\end{lemma}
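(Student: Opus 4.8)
The plan is to reduce the $d$-dimensional cell problem to a one-dimensional optimal-profile problem and to solve the latter directly. Fix $\nu$ and $C$, and for a function of the special form $u^\dagger(x)=\bar u(x\cdot\nu)$ with $\bar u\colon\bbR\to[-1,1]$, use Fubini together with the compact support of $\eta$ (assumption (C3)) and the $C$-periodicity to rewrite $\frac{1}{\cH^{d-1}(C)}G^{(p)}(u^\dagger,\lambda,T_C)$ as a purely one-dimensional functional
\[
E_\nu(\bar u):=\lambda\int_{\bbR}\int_{\bbR^d}\eta(h)\,|\bar u(t+h\cdot\nu)-\bar u(t)|^p\,\dd h\,\dd t+\int_{\bbR}V(\bar u(t))\,\dd t .
\]
Since every such $u^\dagger$ lies in $\cU(C,\nu)$, the infimum defining $\sigma^{(p)}$ is bounded above by $\inf E_\nu$. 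The content of the lemma is therefore to (i) show that $E_\nu$ attains its minimum at some \emph{increasing} $\bar u$, and (ii) show that this value also bounds the cell problem from below, so that $u^\dagger=\bar u(\,\cdot\,\nu)$ is a genuine minimiser of the special form.

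For (i) I would run the direct method in the class of nondecreasing functions $\bar u\colon\bbR\to[-1,1]$ with $\bar u(\pm\infty)=\pm1$. The advantage of monotone profiles is compactness for free: a minimising sequence $\{\bar u_k\}$ is uniformly bounded and monotone, so by Helly's selection theorem a subsequence converges pointwise a.e.\ to a nondecreasing limit $\bar u$. Both terms of $E_\nu$ are lower semicontinuous under a.e.\ convergence — the potential term by Fatou (using $V\ge0$ and continuity from (B1)-(B2)), and the nonlocal term by Fatou applied to its nonnegative integrand — so $E_\nu(\bar u)\le\liminf_k E_\nu(\bar u_k)$. The only delicate point is that the limits at $\pm\infty$ survive: a drift of the transition to infinity is excluded because finite energy forces $V(\bar u)$ to be integrable, so $\bar u$ cannot settle at a value away from $\{\pm1\}$, and after anchoring the transition (translating in $t$) the boundary data are retained.

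For (ii) I would first establish existence of a minimiser $u^{*}$ of the full cell problem over $\cU(C,\nu)$, again by the direct method ($L^1_{\mathrm{loc}}$ compactness, lower semicontinuity by Fatou, periodicity passing to the limit, and the $\pm\infty$ conditions anchored as in (i)). I would then replace $u^{*}$ by its monotone increasing rearrangement in the direction $\nu$, performed line by line along $\{y+t\nu\}$: this is equimeasurable on each line, hence leaves $\int_{T_C}V(u^{*})$ unchanged, while the crucial claim is that it does not increase the nonlocal term,
\[
\int_{T_C}\int_{\bbR^d}\eta(h)\,|u^{*}(z+h)-u^{*}(z)|^p\,\dd h\,\dd z\ \ge\ \int_{T_C}\int_{\bbR^d}\eta(h)\,|u^{\sharp}(z+h)-u^{\sharp}(z)|^p\,\dd h\,\dd z .
\]
Combining this with the convexity of $t\mapsto|t|^p$ in the transverse averaging (which can only lower the nonlocal term) is what should force the optimal profile to lose its transverse dependence and collapse to a function of $x\cdot\nu$ alone, matching the $E_\nu$-minimiser from (i).

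The main obstacle is precisely the nonlocal rearrangement inequality displayed above, together with the collapse to genuine $\nu$-dependence. Assumption (C2) gives only that $\eta$ is \emph{even}, not that it is a radially decreasing function of $|h|$, so the classical Riesz and Pólya–Szegő symmetric-decreasing rearrangement inequalities do not apply verbatim, and rearranging independently on different lines could a priori increase interactions between neighbouring lines. I would handle this through a layer-cake/coarea representation of $|u(z+h)-u(z)|^p$ in the values of $u$, reducing the estimate to the $\nu$-directional rearrangement of the superlevel sets $\{u>s\}$ and their pairwise $\eta$-interactions, and then exploit evenness via two-point (polarisation) symmetrisations with respect to hyperplanes orthogonal to $\nu$: each polarisation preserves the distribution of $u$ (hence the potential term and the boundary data) and, because $\eta(-h)=\eta(h)$ lets the $+h$ and $-h$ contributions be paired, does not increase the symmetrised two-point interaction. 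Iterating the polarisations and passing to the limit produces the monotone $\nu$-rearrangement and the required inequality. Verifying that each polarisation step is genuinely monotone for the anisotropic kernel, and that the $\pm\infty$ behaviour is preserved in the limit, is the technical heart of the argument.
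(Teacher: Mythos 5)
The paper never writes this proof out: it defers entirely to Alberti--Bellettini, citing \cite[Theorem 3.3]{alberti98a} and saying that one absorbs $\lambda$ into $\eta$ and modifies \cite[Proposition 3.7]{alberti98a} to handle general $p\geq 1$. Your skeleton (one-dimensional existence in the monotone class via Helly, then reduction of the cell problem by rearrangement in direction $\nu$) is the same strategy, and your part (i) is essentially their existence proof and is sound. However, part (ii) contains two steps that fail as written.

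First, the decomposition ``line-by-line monotone rearrangement, then transverse averaging using convexity of $|t|^p$'' cannot force the collapse to a function of $x\cdot\nu$. Averaging $\bar u(t)=\frac{1}{\cH^{d-1}(C)}\int_C u(y+t\nu)\,\dd\cH^{d-1}(y)$ does lower the nonlocal term (Jensen plus $C$-periodicity), but it can strictly \emph{raise} the potential term, because $V$ is a double well and hence non-convex: if half of the lines jump from $-1$ to $+1$ at height $-T$ and the other half at height $+T$, every line has zero potential energy, while $\bar u\equiv 0$ on an interval of length $2T$ and pays $2T\,V(0)>0$. So convexity controls only one of the two terms, and the competition between them is exactly the nontrivial content; ``averaging can only help'' is false. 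The device that works, and is what \cite{alberti98a} actually use, is a single \emph{global} monotone rearrangement on the strip: for each level $s$, replace $\{u>s\}\cap T_C$ by a half-strip $\{x\in T_C:\,x\cdot\nu>\lambda_s\}$ of the same renormalised volume. This yields a function of $x\cdot\nu$ alone in one step, preserves $\int_{T_C}V(u)$ exactly by equimeasurability, and reduces everything to showing the nonlocal term does not increase. Relatedly, your preliminary step --- existence of a minimiser $u^*$ of the full cell problem ``by the direct method with $L^1_{\mathrm{loc}}$ compactness'' --- is unjustified: competitors such as $u_k(x)=\mathrm{sign}(\sin(k\,x\cdot\nu))$ on a unit slab (extended by $\mathrm{sign}(x\cdot\nu)$ outside) have uniformly bounded energy, since $V(\pm1)=0$ and the nonlocal term is at most $2^p\lambda\|\eta\|_{L^1}(1+2R_\eta)$ per unit cross-section, yet they have no $L^1_{\mathrm{loc}}$-convergent subsequence, and under the weak-$*$ convergence one does have, $\int V(\cdot)$ is not lower semicontinuous (the weak limit is $0$ on the slab and $V(0)>0$). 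The detour is also unnecessary: applying the rearrangement inequality to arbitrary competitors already gives that the cell infimum is bounded below by the one-dimensional minimum from part (i).

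Second, the polarisation route does not survive the anisotropy. A two-point symmetrisation across a hyperplane orthogonal to $\nu$ is energy-decreasing only if, for each transverse offset $h'$, the paired kernel $s\mapsto\eta(h'+s\nu)+\eta(h'-s\nu)$ is non-increasing in $|s|$; evenness (C2) gives no such monotonicity, and (C1)--(C3) admit kernels that are positive near the origin but carry a large bump in an annulus away from it, for which individual polarisation steps strictly increase the interaction term. The argument that works for arbitrary even kernels is the one behind \cite[Proposition 3.7]{alberti98a}: reduce to level sets by a layer-cake formula --- for $p>1$ the bivariate identity $|a-b|^p=p(p-1)\int\!\!\int_{s<s'}(s'-s)^{p-2}\,|\chi_{\{a>s\}}-\chi_{\{b>s\}}|\,|\chi_{\{a>s'\}}-\chi_{\{b>s'\}}|\,\dd s\,\dd s'$, which is precisely the ``modification for general $p$'' the paper alludes to --- then decompose the even kernel into symmetric pairs of translations $\pm r\nu$ at fixed transverse offset, and prove the resulting two-point inequality by elementary estimates on renormalised measures of symmetric differences of half-line-like sets (e.g.\ $|(A+r)\triangle B|+|(A-r)\triangle B|\geq 2\max\lb |m(A)-m(B)|,\,r\rb$, where $m(\cdot)$ is the renormalised position). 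Replacing polarisation by this decomposition, and replacing your two-stage rearrangement by the global level-set rearrangement, turns your outline into the intended proof.
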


The proof of the lemma is a simple adaptation of~\cite[Theorem 3.3]{alberti98a}.
Indeed, one can absorb $\lambda$ into the mollifier $\eta$ and then the only difference between Lemma~\ref{lem:ConvNLContinuum:Preliminary:Cell} and~\cite[Theorem 3.3]{alberti98a} is the exponent $p$.
By modifying~\cite[Proposition~3.7]{alberti98a} to treat this more general case we can conclude Lemma~\ref{lem:ConvNLContinuum:Preliminary:Cell} holds.
We omit the proof of Lemma~\ref{lem:ConvNLContinuum:Preliminary:Cell}.

\begin{remark}
\label{rem:ConvNLContinuum:Preliminary:Cell}
By Lemma~\ref{lem:ConvNLContinuum:Preliminary:Cell} we can write
\[ \sigma^{(p)}(x,\nu) = \inf \lb \rho(x) \int_{\bbR} \int_{\bbR^d} \eta(h) |u(z + h\cdot \nu) - u(z) |^p \, \dd h \, \dd z + \int_{\bbR} V(u(z)) \, \dd z \rb \]
where the infimum is taken over functions $u:\bbR\to [-1,1]$ satisfying $\lim_{x\to \infty} u(x)=1$, $\lim_{x\to -\infty} u(x)=-1$.
\end{remark}

We continue by proving some continuity properties of the function $\sigma^{(p)}$.

\begin{lemma}\label{lem:ConNLContinuum:Liminf:contsigma}
Under assumptions (A1) and (C3) the followings hold:
\begin{itemize}
\item[(i)] the function $(x,\nu)\mapsto\sigma^{(p)}(x,\nu)$ is upper semi-continuous on $X\times\mathbb{S}^{d-1}$,
\item[(ii)] for every $\nu\in\mathbb{S}^{d-1}$, the function $x\mapsto\sigma^{(p)}(x,\nu)$ is continuous on $X$.
\end{itemize}
\end{lemma}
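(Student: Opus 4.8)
The plan is to reduce everything to the one–dimensional profile representation of the cell energy supplied by Remark~\ref{rem:ConvNLContinuum:Preliminary:Cell}, namely
\[
\sigma^{(p)}(x,\nu) = \inf_{u}\lb \rho(x)\, D_\nu(u) + W(u)\rb,\qquad D_\nu(u):=\int_{\bbR}\int_{\bbR^d}\eta(h)\,|u(z+h\cdot\nu)-u(z)|^p\,\dd h\,\dd z,
\]
with $W(u):=\int_{\bbR}V(u(z))\,\dd z$, where the infimum runs over $u:\bbR\to[-1,1]$ with $\lim_{t\to\pm\infty}u(t)=\pm1$ and, by Lemma~\ref{lem:ConvNLContinuum:Preliminary:Cell}, may be restricted to non-decreasing profiles. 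The structural point that organises the whole proof is that $x$ enters only through the factor $\rho(x)$ multiplying the Dirichlet term, while $\nu$ enters only through the scalar shifts $h\cdot\nu$ inside $D_\nu$.

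For (ii) I would fix $\nu$ and view $\sigma^{(p)}(\cdot,\nu)$ as a function of $\lambda=\rho(x)$. Setting $g(\lambda):=\inf_u[\lambda D_\nu(u)+W(u)]$, this is an infimum of affine functions of $\lambda$ with non-negative slopes $D_\nu(u)\ge 0$, hence concave and non-decreasing on $(0,\infty)$. It is finite: the test profile $\hat f=\mathrm{sign}$ gives $W(\hat f)=0$ and $D_\nu(\hat f)=2^{p}\int_{\bbR^d}\eta(h)|h\cdot\nu|\,\dd h\le 2^{p}R_\eta\int_{\bbR^d}\eta<\infty$, so $0\le g(\lambda)\le \lambda\,D_\nu(\hat f)$. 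A finite concave function on an open interval is continuous, so $g$ is continuous on $(0,\infty)$; since by (A1) $\rho$ is continuous with values in $[c_1,c_2]\subset(0,\infty)$, the composition $x\mapsto\sigma^{(p)}(x,\nu)=g(\rho(x))$ is continuous. This argument in fact uses only (A1), as the factorisation $\sigma^{(p)}(x,\nu)=\inf_{C,u}[\rho(x)a_{C,u}+b_{C,u}]$ with $a_{C,u},b_{C,u}\ge 0$ independent of $x$ is already visible in the original definition of $\sigma^{(p)}$.

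For (i) I would show that $\sigma^{(p)}$ is an infimum of functions jointly continuous in $(x,\nu)$, from which upper semi-continuity is automatic (an infimum of continuous functions is upper semi-continuous). Continuity in $x$ is immediate from continuity of $\rho$, so the heart of the matter is continuity of $\nu\mapsto D_\nu(u)$ for a fixed non-decreasing $u$. Here I would interchange the order of integration and write $D_\nu(u)=\int_{\bbR^d}\eta(h)\,\phi_u(h\cdot\nu)\,\dd h$, where $\phi_u(s):=\int_{\bbR}|u(z+s)-u(z)|^p\,\dd z$. For monotone $u$ one has the linear bound $\phi_u(s)\le 2^{p}|s|$, and $\phi_u$ is continuous in $s$ by dominated convergence (for $|s_k|\le M$ one dominates by $(u(z+M)-u(z-M))^p$, which is integrable with integral $\le 2^{p+1}M$). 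Then for $\nu_k\to\nu$ the integrands $\eta(h)\phi_u(h\cdot\nu_k)$ converge pointwise and are dominated by $2^{p}\eta(h)|h|$, integrable by Remark~\ref{rem:eta}; dominated convergence yields $D_{\nu_k}(u)\to D_\nu(u)$, so each $(x,\nu)\mapsto \rho(x)D_\nu(u)+W(u)$ is continuous.

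The main obstacle is precisely this continuity of $D_\nu(u)$ in the direction $\nu$: one must push the $\nu$-dependence through a single fixed, possibly irregular profile, and it is the one-dimensional reduction together with the integrable majorant $2^{p}\eta(h)|h|$ that makes the dominated convergence go through. Everything else is soft. I would close by remarking that the asymmetry between (i) and (ii)—full continuity in $x$ but only upper semi-continuity in $(x,\nu)$—mirrors the concave affine structure available in the variable $\rho(x)$ and absent in $\nu$, where no such monotone/concave dependence is at hand.
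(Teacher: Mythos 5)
Your proof is correct, and it takes a genuinely different route from the paper's. For part (i), the paper's argument transports near-optimal competitors $(D,w)$ for $\sigma^{(p)}(\bar{x},\nu)$ to the points $(x_n,\nu_n)$ by a translation together with a rotation $R_n$ satisfying $R_n\nu=\nu_n$, and the technical core is showing the resulting energy error $\delta_n\to 0$, which requires the $L^1$-continuity of $\eta$ under rotations (proved by approximating $\eta$ in $L^1$ by continuous functions); you avoid rotations entirely by fixing the index set once and for all (monotone one-dimensional profiles, via Lemma~\ref{lem:ConvNLContinuum:Preliminary:Cell} and Remark~\ref{rem:ConvNLContinuum:Preliminary:Cell}), making the $(x,\nu)$-dependence explicit in $\rho(x)D_\nu(u)+W(u)$, proving joint continuity of each member of the family by dominated convergence with majorant $2^p\eta(h)|h|$, and invoking the fact that an infimum of continuous (extended-real-valued, upper semi-continuous) functions is upper semi-continuous; note your bound $\phi_u(s)\le 2^p|s|$ is exactly the paper's estimate \eqref{eq:inequality}. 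For part (ii) the difference is starker: the paper uses part (i) for the limsup direction and a translation-of-near-minimizers argument for the liminf direction, whereas you observe that for fixed $\nu$ the value $g_\nu(\lambda)=\inf_u[\lambda D_\nu(u)+W(u)]$ is a finite, non-decreasing infimum of affine functions of $\lambda$, hence concave and therefore continuous (indeed locally Lipschitz) on $(0,\infty)$, and then compose with the continuous, $[c_1,c_2]$-valued $\rho$. Your route is shorter, makes (ii) independent of (i), and explains structurally why one has full continuity in $x$ but only upper semi-continuity in $\nu$; the paper's route is more robust in one respect, namely that it uses nothing about how the density enters the energy beyond continuity of $\rho$, so it would survive a nonlinear dependence on $\rho(x)$, where your concavity trick would not. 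Two points you should make explicit to be complete: first, the restriction of the infimum to non-decreasing profiles needs a one-line justification (by Lemma~\ref{lem:ConvNLContinuum:Preliminary:Cell} the minimum for each cell is attained at an increasing one-dimensional profile, so restricting the class does not change the infimum); second, the integrability of $\eta(h)|h|$ rests on Remark~\ref{rem:eta}, i.e. on (C1)--(C4) rather than (C3) alone---though the paper's own proof makes the same implicit use of $\|\eta\|_{L^1}<\infty$, so this is a shared, not a new, caveat.
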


\begin{proof}
\emph{Proof of (i).} Fix $\bar{x}\in X$ and $\nu\in\mathbb{S}^{d-1}$ and let $\{x_n\}_{n=1}^\infty\subset X$ and $\{\nu_n\}_{n=1}^\infty\subset\mathbb{S}^{d-1}$ be such that $x_n\to\bar{x}$ and $\nu_n\to\nu$ as $n\to\infty$.
Let $R_n$ be a rotation such that $R_n\nu=\nu_n$.
Fix $t>0$ and let $D\in\cC(\bar{x},\nu)$ and $w\in\cU(D,\nu)$ be such that
\[
\frac{1}{\cH^{d-1}(D)}G^{(p)}\left( w,\rho(\bar{x}), T_{D} \right)\leq \sigma^{(p)}(\bar{x},\nu)+t\,.
\]
By Remark~\ref{rem:ConvNLContinuum:Preliminary:Cell} we can assume that $w(x) = \bar{w}(x\cdot \nu)$ for some increasing function $\bar{w}:\bbR\to[-1,1]$.
Then,
\begin{align*}
\int_{T_D} |w(z+h) - w(z)|^p \, \dd z & \leq 2^{p-1} \cH^{d-1}(D) \int_{\bbR} | \bar{w}(z_\nu + h_\nu) - \bar{w}(z_\nu) | \, \dd z_\nu \quad \text{where } h_\nu = h\cdot \nu \\
 & = 2^{p-1} \cH^{d-1}(D) \lim_{M\to \infty} \sum_{i=-M}^{M} \int_0^{h_\nu} |\bar{w}(y+(i+1)h_\nu) - \bar{w}(y+ih_\nu)| \, \dd y \\
 & = 2^{p-1} \cH^{d-1}(D) \lim_{M\to \infty} \int_0^{h_\nu} \bar{w}(y+(M+1)h_\nu) - \bar{w}(y-Mh_\nu) \, \dd y \\
 & = 2^p \cH^{d-1}(D) h_\nu
\end{align*}
with the last line following from the monotone convergence theorem.
Hence,
\begin{equation}\label{eq:inequality}
\int_{T_{D}} |w(z+h) - w(z)|^p \, \dd z \leq  2^p \cH^{d-1}(D) h\cdot \nu\,.
\end{equation}

For $n\in\bbN$ define $C_n\in\cC(x_n,\nu_n)$ and $u_n\in\cU(C_n,\nu_n)$ by
\[
C_n:=R_n(D-\bar{x})+x_n\,,
\]
\[
u_n(x):=w(R_n^{-1}(x-x_n)+\bar{x})\,,
\]
respectively. Then
\begin{align*}
\sigma^{(p)}(x_n,\nu_n)&\leq \frac{1}{\cH^{d-1}(C_n)}G^{(p)}(u_n,\rho(x_n),T_{C_n})\\
&\leq \frac{1}{\cH^{d-1}(D)}G^{(p)}(w,\rho(\bar{x}),T_{D}) + \delta_n\\
&\leq \sigma^{(p)}(\bar{x},\nu)+t+\delta_n\,,
\end{align*}
where
\[
\delta_n:=\frac{1}{\cH^{d-1}(D)} \left|\, G^{(p)}(u_n,\rho(x_n),T_{C_n})-G^{(p)}(w,\rho(\bar{x}),T_{D}) \,\right|\,.
\]
We claim that $\delta_n\to0$ as $n\to\infty$. Since $t>0$ is arbitrary, this will prove the upper semi-continuity.

Fix $\eps>0$.
Using the continuity of $\rho$, for $n$ sufficiently large we have that $|\rho(x_n)-\rho(\bar{x})|<\eps$.
Thus, we get
\begin{align*}
\delta_n & = \frac{1}{\cH^{d-1}(D)} \Bigg| \rho(x_n) \int_{T_{D}}\int_{\bbR^d} \eta(R_n h) \la w(z+h)-w(z) \ra^p \, \dd h \, \dd z \\
 &\hspace{2.5cm}- \rho(\bar{x}) \int_{T_{D}}\int_{\bbR^d} \eta(h) \la w(z+h)-w(z) \ra^p \, \dd h \, \dd z \Bigg| \\
 & \leq \frac{|\rho(x_n)-\rho(\bar{x})|}{\cH^{d-1}(D)} \int_{\bbR^d} \eta(h) \int_{T_{D}}  |w(z+h)-w(z)|^p \, \dd z \, \dd h \\
 &\hspace{2.5cm}+ \frac{\rho(x_n)}{\cH^{d-1}(D)} \int_{\bbR^d} \la \eta(R_nh) - \eta(h) \ra \int_{T_{D}} |w(z+h)-w(z)|^p \, \dd z \, \dd h \\
 & \leq 2^p\eps R_\eta \|\eta\|_{L^1} + 2^p c_2 R_\eta \int_{\bbR^d} \la \eta(R_n h) - \eta(h) \ra \, \dd h 
\end{align*}
where in the last step we used \eqref{eq:inequality}.
In order to show that the second term in the parenthesis vanishes, we use an argument similar to the one for proving that translations are continuous in $L^p$. For every $s>0$ let $\widetilde{\eta}_s:\bbR\to[0,\infty)$ be a continuous function with support in $B(0,2R_\eta)$ such that $\|\eta-\widetilde{\eta}_s\|_{L^1(\bbR^d)}<s$. Then, for every $r>0$ there exists $\bar{n}\in\bbN$ such that $|\widetilde{\eta}_s(R_n h)-\widetilde{\eta}_s(h)|<r$ for all $h\in\bbR^d$ and all $n\geq\bar{n}$.
So that, for $n\geq\bar{n}$
\[
\int_{\bbR^d} \la \eta(R_n h) - \eta(h) \ra \, \dd h\leq \|\eta-\widetilde{\eta}_s\|_{L^1(\bbR^d)}
    +\int_{\bbR^d} |\widetilde{\eta}_s(R_n h)-\widetilde{\eta}_s(h)| \, \dd h \leq s+r\Vol(B(0,2R_\eta))\,.
\]
Since $r$ and $s$ are arbitrary, we conclude that $\int_{\bbR^d} \la \eta(R_n h) - \eta(h) \ra \, \dd h\to0$ as $n\to\infty$ and, in turn, that $\delta_n\to0$ as $n\to\infty$.\vspace{0.5\baselineskip}

\emph{Proof of (ii).} 
Fix $\nu\in \mathbb{S}^{d-1}$, $\bar{x}\in X$ and let $x_n\rightarrow \bar{x}$. 
By part (i) we have that $\sigma^{(p)}(\bar{x},\nu)\geq\limsup_{n\to\infty}\sigma^{(p)}(x_n,\nu)$.
\vspace{0.5\baselineskip}

We claim that $\sigma^{(p)}(\bar{x},\nu)\leq\liminf_{n\to\infty}\sigma^{(p)}(x_n,\nu)$.
Without loss of generality, let us assume that 
\[ \liminf_{n\to\infty}\sigma^{(p)}(x_n,\nu)=\lim_{n\to\infty}\sigma^{(p)}(x_n,\nu)<\infty\,. \]
For every $n\in\bbN$ let $C_n\in\cC(x_n,\nu)$ and $u_n\in\cU(C_n,\nu)$
be such that
\begin{equation}\label{eq:ConNLContinuum:Liminf:est1}
\frac{1}{\cH^{d-1}(C_n)}G^{(p)}(u_n,\rho(x_n),T_{C_n})\leq \sigma^{(p)}(x_n,\nu)+\frac{1}{n}.
\end{equation}
Set $\lambda_n:=\bar{x}-x_n$ and define
\[ \widetilde{C}_n:=C_n+\lambda_n\,,\quad\quad\quad \widetilde{u}_n(x):=u_n(x-\lambda_n). \]
So, $\widetilde{C}_n\in\cC(\bar{x},\nu)$ and $\widetilde{u}_n\in\cU(\widetilde{C}_n,\nu)$.
Using \eqref{eq:ConNLContinuum:Liminf:est1}, we get
\begin{align}\label{eq:estimateliminfsigma}
\sigma^{(p)}(\bar{x},\nu) &\leq \frac{1}{\cH^{d-1}(\widetilde{C}_n)}G^{(p)}(\widetilde{u}_n,\rho(\bar{x}),T_{\widetilde{C}_n}) \nonumber\\
&\leq \frac{G^{(p)}(u_n,\rho(x_n),T_{C_n}) }{\cH^{d-1}(C_n)}
    + \frac{1}{\cH^{d-1}(C_n)}\left|\,G^{(p)}(\widetilde{u}_n,\rho(\bar{x}),T_{\widetilde{C}_n}) - G^{(p)}(u_n,\rho(x_n),T_{C_n}) \,\right| \nonumber\\
& \leq \sigma^{(p)}(x_n,\nu)+\frac{1}{n} + \frac{1}{\cH^{d-1}(C_n)}\left|\,G^{(p)}(\widetilde{u}_n,\rho(\bar{x}),T_{\widetilde{C}_n}) - G^{(p)}(u_n,\rho(x_n),T_{C_n}) \,\right| 
\end{align}

To estimate the last term, we reason as follows.
First of all, we notice that
\begin{equation}\label{eq:vvanishes}
\int_{T_{C_n}} V\left( u_n(z) \right)\dd z = \int_{T_{\widetilde{C}_n}} V\left( \widetilde{u}_n(z) \right)\dd z\,.
\end{equation}
Fix $\eps>0$.
Using the continuity of $\rho$, there exists $\bar{n}\in\mathbb{N}$, such that
$|\rho(x_n)-\rho(\bar{x})|<\eps$ for all $n\geq\bar{n}$.
From \eqref{eq:vvanishes} we get that
\begin{align*}
&\frac{1}{\cH^{d-1}(C_n)}\left|\,G^{(p)}(\widetilde{u}_n,\rho(\bar{x}),T_{\widetilde{C}_n}) - G^{(p)}(u_n,\rho(x_n),T_{C_n}) \,\right|\\
&\quad \quad\quad\quad\leq \frac{1}{\cH^{d-1}(C_n)}|\rho(x_n)-\rho(\bar{x})|
    \int_{T_{C_n}} \int_{\bbR^d} \eta(h) \la u_n(z+h) - u_n(z) \ra^p  \, \dd h \, \dd z  \\
&\quad \quad\quad\quad \leq \frac{\eps}{c_1} \l \sigma^{(p)}(x_n,\nu) + \frac{1}{n} \r \,. \\
\end{align*}
By the above and~\eqref{eq:estimateliminfsigma} we have
\[ \l 1-\frac{\eps}{c_1 n} \r \sigma^{(p)}(\bar{x},\nu) \leq \l 1+\frac{\eps}{c_1}\r \sigma^{(p)}(x_n,\nu) + \frac{1}{n}. \]
Using the arbitrariness of $\eps$ we conclude that
\[
\sigma^{(p)}(\bar{x},\nu)\leq\liminf_{n\to\infty}\sigma^{(p)}(x_n,\nu)\,
\]
as required.
\end{proof}

\begin{remark}
Notice that the above result did not require the existence of a solution for the infimum problem defining $\sigma^{(p)}$. \vspace{0.8\baselineskip}
\end{remark}

We notice that the main feature of the $\Gamma$-convergence of $\cF^{(p)}_{\eps_n}$ to $\cG^{(p)}_\infty$ is that we recover, in the limit, a local functional starting from a nonlocal functional.
To be more precise, let $A,B\subset X$ be disjoint sets. Then, it holds that
\begin{equation}\label{eq:nonloc}
\cF^{(p)}_{\eps_n}(u, A\cup B)=\cF^{(p)}_{\eps_n}(u, A)+\cF^{(p)}_{\eps_n}(u, B)+2\widetilde{\Lambda}_{\eps_n}(u, A, B)
\end{equation}
where we define the nonlocal deficit
\begin{equation} \label{eq:ConvNLContinuum:Prelim:Lambdatilde}
\widetilde{\Lambda}_{\eps}(u, A, B):=\frac{s_{\eps}}{\eps}\int_A\int_B \eta_\eps(x-z) |u(x)-u(z)|^p \rho(x)\rho(z) \, \dd x \, \dd z\,.
\end{equation}
On the other hand, for the limiting functional we have
\begin{equation}\label{eq:local}
\cG^{(p)}_\infty(u, A\cup B)=\cG^{(p)}_\infty(u, A)+\cG^{(p)}_\infty(u, B)\,,
\end{equation}
where, for $u\in BV(X;\{\pm1\})$, we set
\[
\cG^{(p)}_\infty(u, A):=\int_{\partial^*\{u=1\}\cap A} \sigma^{(p)}(x,\nu_u(x)) \rho(x) \, \dd \cH^{d-1}(x)\,.
\]
Identity \eqref{eq:nonloc} states that the functionals $\cF^{(p)}_{\eps_n}$ are nonlocal, while \eqref{eq:local} is the locality property of the limiting functional $\cG^{(p)}_\infty$. Thus, we expect the nonlocal deficit to disappear in the limit,
\emph{i.e.}, that if $u_{\eps_n}\to u$ in $L^1(X)$, then
\begin{equation}\label{eq:nonlocdefvanish}
\widetilde{\Lambda}_{\eps_n}(u_{\eps_n}, A, B)\to0\,,
\end{equation}
as $n\to\infty$. 
For technical reasons we also consider the nonlocal deficits without weighting by $\rho$ or $s_\eps$:
\[
\Lambda_\eps(u, A, B):=\frac{1}{\eps}\int_A\int_B \eta_\eps(x-z) |u(x)-u(z)|^p \, \dd x \, \dd z\,.
\]
By continuity of $\rho$ if $A$ and $B$ are sets in $X$ that are close to $\bar{x}$ then $\widetilde{\Lambda}_\eps(u,A,B)\approx s_\eps\rho^2(\bar{x})\Lambda_\eps(u,A,B)$. 
In~\cite{alberti98} the authors prove that the limit of the nonlocal deficit is determined by the behavior of $u_{\eps_n}$ close to the boundaries of $A$ and $B$ and, in turn, that \eqref{eq:nonlocdefvanish} holds in certain cases of interest. Here we only state the main technical result of \cite{alberti98} in a version we need in the paper, addressing the interested reader to the paper by Alberti and Bellettini for the details.

\begin{proposition}\label{prop:nonlocdef}
Assume (A1) and (C1-3) hold.
Let $v_n\to v$ in $L^1(X)$ with $|v_n|\leq1$. Then, for all $\bar{x}\in\bbR^d$ and for all $\nu\in\mathbb{S}^{d-1}$ the following holds:
given $C\in\mathcal{C}(\bar{x},\nu)$ consider the strip $T_C$ and any cube $Q\subset\bbR^d$ whose intersection with $\nu^\perp$ is $C$, for a.e. $t>0$:
\begin{itemize}
\item[(i)] $\Lambda_{\eps_n}(v_n,tT_C,\bbR^d\setminus tT_C)\to0$ as $n\to\infty$,
\item[(ii)] $\Lambda_{\eps_n}(v_n,tQ,tT_C\setminus tQ)\to0$ as $n\to\infty$.
\end{itemize} \vspace{0\baselineskip}
\end{proposition}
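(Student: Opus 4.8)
The plan is to prove (i) and (ii) by the same mechanism. The nonlocal deficit $\Lambda_{\eps_n}$ between a region and its complement is supported in an $O(\eps_n)$--neighbourhood of the separating surface, so a \emph{fixed-scale} estimate only gives boundedness; smallness must be extracted by \emph{averaging over the dilation parameter $t$}, exploiting that the thin boundary layers attached to distinct values of $t$ are essentially disjoint. I would set up coordinates so that $\bar x$ projects to the origin of $\nu^\perp$, with $C=[-\tfrac12,\tfrac12]^{d-1}\subset\nu^\perp$, and write $x=(x',x_\nu)$; extend $v_n,v$ by zero outside $X$ so that translations $\tau_h v(x):=v(x-h)$ act as isometries of $L^p(\bbR^d)$. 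For (i), substituting $z=x-h$ and using $x\in tT_C\Leftrightarrow x'\in tC$ gives
\[ \Lambda_{\eps_n}(v_n,tT_C,\bbR^d\setminus tT_C)=\frac{1}{\eps_n}\int_{\bbR^d}\eta_{\eps_n}(h)\int_{\bbR^d}\ca_{tC}(x')\,\ca_{(tC)^c}(x'-h')\,|v_n(x)-v_n(x-h)|^p\,\dd x\,\dd h. \]
Integrating over $t\in(t_1,t_2)$ and applying Fubini, for fixed $(x',h')$ the admissible $t$ form the interval $[\,2\|x'\|_\infty,\,2\|x'-h'\|_\infty\,)$, whose length is at most $2\|h'\|_\infty\le 2|h|\le 2R_\eta\eps_n$ on $\supp\eta_{\eps_n}$. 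This is the crucial point: the factor $\eps_n$ cancels the $1/\eps_n$, and after the change of variables $h=\eps_n w$,
\[ \int_{t_1}^{t_2}\Lambda_{\eps_n}(v_n,tT_C,\bbR^d\setminus tT_C)\,\dd t\le 2R_\eta\int_{\bbR^d}\eta(w)\,\|v_n-\tau_{\eps_n w}v_n\|_{L^p(\bbR^d)}^p\,\dd w. \]

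Next I would show the right-hand side tends to $0$. Since $|v_n|\le1$ and $v_n\to v$ in $L^1$, the bound $|v_n-v|^p\le 2^{p-1}|v_n-v|$ upgrades this to convergence in $L^p$, and $\sup_n\|v_n\|_{L^p}^p\le\sup_n\|v_n\|_{L^1}<\infty$. For each fixed $w$,
\[ \|v_n-\tau_{\eps_n w}v_n\|_{L^p}\le \|v_n-v\|_{L^p}+\|v-\tau_{\eps_n w}v\|_{L^p}+\|\tau_{\eps_n w}(v-v_n)\|_{L^p}\to0, \]
using $\eps_n w\to0$ and the continuity of translations in $L^p$. The integrand is dominated by $2^p(\sup_n\|v_n\|_{L^1})\,\eta(w)$, which lies in $L^1(\dd w)$ since $\eta\in L^1$ by Remark~\ref{rem:eta}; dominated convergence then yields $\int_{t_1}^{t_2}\Lambda_{\eps_n}\,\dd t\to0$ for every $0<t_1<t_2$.

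Finally I would pass from the integrated convergence to the pointwise-in-$t$ statement. Writing $g_n(t):=\Lambda_{\eps_n}(v_n,tT_C,\bbR^d\setminus tT_C)\ge0$, the convergence $\int_{t_1}^{t_2}g_n\,\dd t\to0$ forces $g_n\to0$ in measure on $(t_1,t_2)$, hence $g_n(t)\to0$ for a.e.\ $t$ along a subsequence; combined with Fatou's lemma (which gives $\liminf_n g_n(t)=0$ a.e.), this is exactly the sense in which the intermediate result is invoked, after first extracting a liminf-achieving subsequence in the $\Gamma$-liminf argument. Part (ii) is identical, with $\partial(tT_C)$ replaced by the two capping faces of $tQ$ interior to the open strip $tT_C$: these form a bounded Lipschitz surface transverse to $\nu$, the admissible $t$-interval for fixed $(x,h)$ again has length $O(|h|)=O(\eps_n)$, and one arrives at the same quantity $\int\eta(w)\|v_n-\tau_{\eps_n w}v_n\|_{L^p}^p\,\dd w$.

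The hard part is conceptual rather than computational: because there is \emph{no} energy bound on $v_n$ (only $L^1$ convergence and $|v_n|\le1$), the naive estimate $|v_n(x)-v_n(z)|^p\le2^p$ gives $\Lambda_{\eps_n}=O(1)$ at a fixed scale and cannot be improved there. All of the smallness comes from averaging in $t$, which manufactures the compensating factor $\eps_n$, together with the $L^p$ translation-continuity of the limit $v$. The measure-theoretic passage in the last step is precisely why the conclusion holds only for a.e.\ $t$ (and, strictly, along a subsequence).
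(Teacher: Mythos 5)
Your proof is correct in its core mechanism, but it takes a genuinely different route from the paper's. The paper does not prove Proposition~\ref{prop:nonlocdef} directly at all: as Remark~\ref{rem:nonlocldef} explains, the result is imported from Alberti--Bellettini \cite[Proposition 2.5 and Theorem 2.8]{alberti98}, where it is proved for $p=1$, and the only new content is the reduction $|v_n(x)-v_n(z)|^p\le 2^{p-1}|v_n(x)-v_n(z)|$, valid since $|v_n|\le 1$, which bounds the general-$p$ deficit by the $p=1$ deficit. You instead reconstruct, self-contained and for all $p\ge 1$ at once, what is essentially the engine inside the cited result: Fubini over the dilation parameter $t$, the observation that for fixed $(x,h)$ the admissible $t$-set has measure $O(|h|)=O(\eps_n)$ (cancelling the $1/\eps_n$), then $L^p$-translation continuity and dominated convergence. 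Your route buys transparency --- it shows exactly where the ``a.e.\ $t$'' comes from and why no energy bound is needed --- at the cost of length; the paper's route is two lines given the citation. Two small points: your appeal to Remark~\ref{rem:eta} for $\eta\in L^1$ invokes (C4), which is not among the stated hypotheses (C1--3) (some integrability of $\eta$ is needed by any proof, so this imprecision is inherited from the paper); and your normalisation placing $\bar{x}$ at the origin is in fact the only correct reading of the statement, since if the dilation centre lies on a face of $T_C$ then that face of $tT_C$ is independent of $t$ and the conclusion fails even for constant sequences (take $v_n=v$ jumping across that face).

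The one substantive discrepancy is the one you flag yourself: averaging yields $\int_{t_1}^{t_2}\Lambda_{\eps_n}(v_n,tT_C,\bbR^d\setminus tT_C)\,\dd t\to 0$, hence a.e.-$t$ convergence only \emph{along a subsequence} (plus, by Fatou, $\liminf_n\Lambda_{\eps_n}=0$ for a.e.\ $t$ along the full sequence), whereas the proposition as written asserts full-sequence convergence for a.e.\ $t$. Your caution is justified, and the discrepancy is not a removable defect of your method: full-sequence convergence cannot follow from $v_n\to v$ in $L^1$ and $|v_n|\le 1$ alone. Indeed, take $v_n\in\{\pm 1\}$ equal to $+1$ exactly on a thin slab of width comparable to $\eps_n$, parallel to a face of $T_C$ and truncated to a bounded region, whose edge sweeps the interval of relevant positions in steps of size $\eps_n/4$; if $\sum_n\eps_n=\infty$ (e.g.\ $\eps_n=1/n$), every $t\in[t_1,t_2]$ is visited infinitely often, so $\Lambda_{\eps_n}(v_n,tT_C,\bbR^d\setminus tT_C)\ge c>0$ infinitely often for \emph{every} $t$, while $v_n\to -1$ in $L^1$. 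So the proposition must be understood in the subsequential (equivalently integrated, or liminf) sense, which is exactly how it is used in the proof of the liminf inequality, where a subsequence has already been extracted and passing to further subsequences is harmless. Your proof supplies precisely this usable form; the only change you should make is to state explicitly the version you actually prove.
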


\begin{remark}\label{rem:nonlocldef}
The boundness assumption on the sequence $\{v_n\}_{n=1}^\infty$ allows one to obtain the proof of the above result directly from~\cite[Proposition 2.5 and Theorem 2.8]{alberti98}.
In particular, in~\cite{alberti98} the authors prove the result for $p=1$ and $\rho\equiv 1$, using the $L^\infty$ bound on $v_n$ one can easily bound the more general case considered here by the $L^1$ case. 
With similar computations it is also possible to obtain the same result without the $L^\infty$ bound. 

Finally, notice that when $A,B\in\bbR^d$ are disjoint sets with $\mathrm{d}(A,B)>0$, using the fact that the function $\eta$ has support in the ball $B(0,R_\eta)$ (see (C3)), it is easy to prove that there exists $\bar{n}\in\mathbb{N}$ such that for all $n\geq\bar{n}$ it holds
\[
\Lambda_{\eps_n}(v_n, A, B)=0\,.
\]
\vspace{0\baselineskip}
\end{remark}

For technical reasons we need to introduce a scaled version of the functional $G^{(p)}$.

\begin{mydef}
For $\eps>0$, $p\geq1$, $u:\bbR^d\to\bbR$, $\lambda\in\bbR$, and $A\subset\bbR^d$, we define
\[
G^{(p)}_\eps(u,\lambda,A):= \frac{\lambda}{\eps} \int_{A} \int_{\bbR^d} \eta_\eps(h)
    |u(z+h) - u(z)|^p \, \dd h \, \dd z + \frac{1}{\eps}\int_{A} V(u(z)) \, \dd z\,.
\]
\vspace{0\baselineskip}
\end{mydef}

Let $r>0$ and $x\in X$. For a set $A\subset\bbR^d$, we define $x+rA:=\{ x+ry \,:\, y\in A \}$.
Moreover, for a function $u:\bbR^d\to\bbR$, we set
\[ R_{x,r}u(y):=u(x+ry)\,. \]
Using a change of variables, it is easy to see that the following scaling property holds true:
\begin{equation}\label{eq:scalprop1}
G^{(p)}_\eps(u,\lambda, x+rA)=r^{d-1}G^{(p)}_{\eps/r}(R_{x,r}u,\lambda, A)\,.
\end{equation}


\subsection{The Liminf Inequality \label{subsec:ConvNLContinuum:Liminf}}

This section is devoted to proving the following: let $u_{\eps_n}\rightarrow u$ in $L^1$, then
\begin{equation}\label{eq:liminftoprove}
\cG^{(p)}_{\infty}(u)\leq\liminf_{n\rightarrow\infty}\cF^{(p)}_{\eps_n}(u_{\eps_n})\,.
\end{equation}
We will follow the proof of \cite[Theorem 1.4]{alberti98}, with some modifications due to the presence of the density~$\rho$.

\begin{proof}[Proof of Theorem~\ref{thm:ConvNLContinuum:ConvNLContinuum} (Liminf).]
Let $\eps_n\to 0^+$ and $u_{\eps_n}\to u$ in $L^1(X,\mu)$.
Assume without loss of generality that
\begin{equation}\label{eq:bound}
\liminf_{n\rightarrow\infty}\cF^{(p)}_{\eps_n}(u_{\eps_n})
    =\lim_{n\rightarrow\infty}\cF^{(p)}_{\eps_n}(u_{\eps_n})<\infty\,.
\end{equation}

\emph{Step 1.} By compactness (see Section \ref{subsec:ConvNLContinuum:Compact}) it holds $u=\chi_A$ for some set $A\subset X$  of finite perimeter in $X$. In order to prove \eqref{eq:liminftoprove} we use the strategy introduced by Fonseca and M\"{u}ller in \cite{fonsecamuller93}. Write
\begin{equation}\label{eq:writing}
\cF^{(p)}_{\eps_n}(u_{\eps_n})=\int_X g_{\eps_n}(x)\dd x
\end{equation}
and set $\dd \lambda_{\eps_n}:= g_{\eps_n} \dd \mathcal{L}^d\res X
$,
so that
\begin{equation}\label{eq:writing1}
|\lambda_{\eps_n}|(X)=\cF^{(p)}_{\eps_n}(u_{\eps_n})\,.
\end{equation}
Using \eqref{eq:bound}, \eqref{eq:writing}, \eqref{eq:writing1}
, up to a subsequence (not relabeled) it holds $\lambda_{\eps_n}\stackrel{*}{\rightharpoonup}\lambda$
for some finite Radon measure $\lambda$ on $X$.
Then
\begin{equation}\label{eq:lsctv}
|\lambda|(X)\leq\liminf_{n\to\infty}|\lambda_{\eps_n}|(X)\,.
\end{equation}
In view of \eqref{eq:writing1} and \eqref{eq:lsctv}, the liminf inequality \eqref{eq:liminftoprove} is implied by the following claim: for $\cH^{d-1}$-a.e. $\bar{x}\in \partial^*\{u=1\}$ it holds
\[
\sigma^{(p)}(\bar{x},\nu(\bar{x}))\rho(\bar{x})\leq\frac{\mathrm{d}\lambda}{\mathrm{d}\theta}(\bar{x})\,,
\]
where $\theta:=\cH^{d-1}\res \partial^*\{u=1\}$.
In order to prove the claim we reason as follows. For $\cH^{d-1}$-a.e. $\bar{x}\in \partial^*\{u=1\}$ it is possible to find the density of $\lambda$ with respect to $\theta$ via (recall Remark \ref{rem:newdefnormal})
\begin{equation}\label{eq:density}
\frac{\mathrm{d}\lambda}{\mathrm{d}\theta}(\bar{x})=\lim_{r\to0}\,\frac{\lambda(\bar{x}+r Q)}{r^{d-1}}\,,
\end{equation}
where $Q$ is a unit cube centered at the origin and having $\nu(\bar{x})$, the measure theoretic exterior normal to $A$ at $\bar{x}$, as one of its axes. Let $\bar{x}\in \partial^*\{u=1\}$. Theorem \ref{thm:DeGiorgi} implies that
\begin{equation}\label{eq:convhalfplane}
R_{\bar{x},r}u\rightarrow v_{\bar{x}}
\end{equation}
in $L^1_{loc}(\mathbb{R}^N)$ as $r\rightarrow0$, where
\[
v_{\bar{x}}(x):=
\left\{
\begin{array}{ll}
-1 & x\cdot \nu(\bar{x})\geq0\,,\\
1 & x\cdot \nu(\bar{x})<0\,.
\end{array}
\right.
\]
Let $\bar{x}\in \partial^*\{u=1\}$ be a point for which \eqref{eq:density} and \eqref{eq:convhalfplane} hold.
Without loss of generality, we can assume that
\begin{equation}\label{eq:densfinite}
\frac{\mathrm{d}\lambda}{\mathrm{d}\theta}(\bar{x})<\infty\,.
\end{equation}
Since $u_{\eps_n}\rightarrow u$ in $L^1$ and $\lambda_{\eps_n}\stackrel{*}{\rightharpoonup}\lambda$, it is possible to find a (not relabeled) subsequence
$\{\eps_n\}_{n=1}^\infty$ and a sequence $\{r_n\}_{n=1}^\infty$
with $r_n\rightarrow0^+$ and $\frac{\eps_n}{r_n}\rightarrow0^+$, such that
\begin{equation}\label{eq:conv1}
\frac{\dd \lambda}{\dd\theta}(\bar{x})=\lim_{n\rightarrow\infty}\frac{\lambda_{\eps_n}(\bar{x}+r_nQ)}{r_n^{d-1}}
\end{equation}
and
\[
R_{\bar{x},r_n}u_{\eps_n}\rightarrow v_{\bar{x}}\,.
\]
Using the fact that $X$ is open, we can assume that $\bar{x}+r_n Q\subset X$ for all $n\in\mathbb{N}$. Thus
\begin{equation}\label{eq:est2}
\frac{\lambda_{\eps_n}(\bar{x}+r_nQ)}{r_n^{d-1}}\geq\frac{\cF^{(p)}_{\eps_n}(u_{\eps_n},\bar{x}+r_n Q)}{r_n^{d-1}}\,.
\end{equation}
\vspace{0.5\baselineskip}

\emph{Step 2.} We claim that
\begin{equation}\label{eq:deltantozero}
\delta_n:=\frac{|\cF^{(p)}_{\eps_n}(u_{\eps_n},\bar{x}+r_nQ) - 
   \rho(\bar{x})\widetilde{\mathcal{F}}^{(p)}_{\eps_n}(u_{\eps_n},\rho(\bar{x}),\bar{x}+r_nQ) |}{r_n^{d-1}}\rightarrow0\,,
\end{equation}
as $n\to\infty$, where
\[
\widetilde{\mathcal{F}}^{(p)}_{\eps}(u,\xi,A):= \frac{\xi s_\eps}{\eps}\int_A \int_A \eta_{\eps}(x-z)|u(x)-u(z)|^p\,\dd x \, \dd z
    +\frac{1}{\eps}\int_A V(u(x)) \, \dd x\,,
\]
for $\eps>0$, $A\subset X$, $u:A\to\mathbb{R}$ and $\xi\in\mathbb{R}$.
Indeed, fix $t>0$. Thanks to Assumption (A1) the function $\rho$ is continuous in $X$. Then, it is possible to find $\bar{n}\in\mathbb{N}$ such that for all $n\geq\bar{n}$ and all $y\in \bar{x}+r_nQ$ it holds
\[
|\rho(y)-\rho(\bar{x})|<t\,.
\]
Thus,
\begin{align*}
\delta_n&\leq \frac{t}{r_n^{d-1}\eps_n} \int_{\bar{x}+r_n Q} V(u_{\eps_n}(x)) \, \dd x
+\frac{ts_{\eps_n}\rho(\bar{x})}{r_n^{d-1}\eps_n} \int_{\bar{x}+r_n Q} \int_{\bar{x}+r_n Q} \eta_{\eps_n}(x-z)|u_{\eps_n}(x)-u_{\eps_n}(z)|^p\,\dd x \, \dd z\\
&\hspace{0.6cm} + \frac{t s_{\eps_n}}{r_n^{d-1} \eps_n} \int_{\bar{x}+r_n Q} \int_{\bar{x}+r_n Q} \eta_{\eps_n}(x-z)|u_{\eps_n}(x)-u_{\eps_n}(z)|^p \rho(z)\,\dd x \, \dd z\\
&\leq \frac{t}{r_n^{d-1}\eps_n c_1} \int_{\bar{x}+r_n Q} V(u_{\eps_n}(x))\rho(x) \, \dd x \\
& \hspace{0.6cm}   + \frac{t s_{\eps_n}(c_1+c_2)}{r_n^{d-1}\eps_n c_1^2} \int_{\bar{x}+r_n Q} \int_{\bar{x}+r_n Q} \eta_{\eps_n}(x-z)|u_{\eps_n}(x)-u_{\eps_n}(z)|^p \rho(z)\rho(x)\,\dd x \, \dd z \\
&\leq \frac{t(c_1+c_2)}{c_1^2}\frac{\lambda_{\eps_n}(\bar{x}+r_nQ)}{r_n^{d-1}} \,,
\end{align*}
where in the last step we used \eqref{eq:est2}.
By~\eqref{eq:densfinite} and~\eqref{eq:conv1} $\lim_{n\to \infty}\delta_n\leq Ct$ for some constant $C<\infty$.
Since $t>0$ is arbitrary, this proves the claim. \vspace{0.5\baselineskip}

\emph{Step 3.} Observe that for any $\lambda\geq 0$, $\eps>0$, $r>0$ and $v\in L^1$ we have
\[ \min\lb 1,\frac{s_{\eps}}{s_{\frac{\eps}{r}}} \rb \tilde{\cF}_{\frac{\eps}{r}}^{(p)}(R_{\bar{x},r}v,\lambda,Q) \leq \frac{1}{r^{d-1}} \tilde{\cF}_\eps^{(p)}(v,\lambda,\bar{x}+rQ) \leq \max\lb 1,\frac{s_{\eps}}{s_{\frac{\eps}{r}}} \rb \tilde{\cF}_{\frac{\eps}{r}}^{(p)}(R_{\bar{x},r}v,\lambda,Q). \]
Let $C=Q\cap \nu(\bar{x})^\perp \in \cC(\bar{x},\nu(\bar{x}))$.
Define the function $w_n:\bbR^d\to\bbR$ as the periodic extension of the function that is $R_{\bar{x},r_n}\,u_{\eps_n}$ in $Q$ and $v_{\bar{x}}$ in $T_C\setminus Q$.
Set $\eps'_n:=\frac{\eps_n}{r_n}$ and $s_n^\prime=\min\lb 1,\frac{s_{\eps_n}}{s_{\eps'_n}}\rb$.
Using \eqref{eq:est2} and \eqref{eq:deltantozero} together with the scaling identity \eqref{eq:scalprop1} we get
\begin{align}\label{eq:finalest1}
\frac{\lambda_{\eps_n}(\bar{x}+r_nQ)}{r_n^{d-1}}&\geq
    \rho(\bar{x})\frac{\widetilde{\mathcal{F}}_{\eps_n}^{(p)}(u_{\eps_n},\rho(\bar{x}),\bar{x}+r_nQ)}{r_n^{d-1}}-\delta_n
    \nonumber\\
&\geq s^\prime_n \rho(\bar{x})\widetilde{\mathcal{F}}^{(p)}_{\eps'_n}(R_{\bar{x},r_n}\,u_{\eps_n},\rho(\bar{x}),Q)-\delta_n
    \nonumber\\
&=s^\prime_n \rho(\bar{x})\widetilde{\mathcal{F}}^{(p)}_{\eps'_n}(w_n,\rho(\bar{x}),Q)-\delta_n
    \nonumber\\
&\geq s^\prime_n \rho(\bar{x})G^{(p)}_{\eps'_n}(w_n,\rho(\bar{x}),T_C)-\delta_n \nonumber\\
&\hspace{0.6cm}-s^\prime_n\rho(\bar{x})\left|\,\widetilde{\mathcal{F}}^{(p)}_{\eps'_n}(w_n,\rho(\bar{x}),Q) -
    G^{(p)}_{\eps'_n}(w_n,\rho(\bar{x}),T_C)   \,\right| \nonumber\\
&=s^\prime_n\rho(\bar{x})\left(\frac{\eps_n}{r_n}\right)^{d-1}G^{(p)}\left(R_{0,\eps'_n}w_n,\rho(\bar{x}),\frac{r_n}{\eps_n}T_C\right)-\delta_n \nonumber\\
&\hspace{0.6cm}-s^\prime_n\rho(\bar{x})\left|\,\widetilde{\mathcal{F}}^{(p)}_{\eps'_n}(w_n,\rho(\bar{x}),Q) -
    G^{(p)}_{\eps'_n}(w_n,\rho(\bar{x}),T_C)   \,\right| \,.
\end{align}
We would like to say that 
\[
\left|\,\widetilde{\mathcal{F}}^{(p)}_{\eps'_n}(w_n,\rho(\bar{x}),Q) -G^{(p)}_{\eps'_n}(w_n,\rho(\bar{x}),T_C)   \,\right|\to0
\]
as $n\to\infty$. Unfortunately, this might not be true. In order to overcome this difficulty, take $t\in (0,1)$.
Notice that we can bound
\begin{align}\label{eq:finalest2}
& \la\widetilde{\cF}^{(p)}_{\eps'_n}(w_n,\rho(\bar{x}),tQ) - G^{(p)}_{\eps'_n}(w_n,\rho(\bar{x}),tT_C)\ra
\leq 2\rho(\bar{x})\Lambda_{\eps'_n}(w_n,tQ,tT_C\setminus tQ) \nonumber\\
&\quad\quad\quad+ \rho(\bar{x}) \Lambda_{\eps'_n}(w_n,tT_C,\bbR^d\setminus tT_C)
    + \rho(\bar{x}) \Lambda_{\eps'_n}(w_n,tT_C\setminus tQ,tT_C\setminus tQ) \nonumber\\
&\quad\quad\quad+\frac{r_n\rho(\bar{x})}{\eps_n} |1-s_{\eps'_n}| \int_{tQ} \int_{tQ} \eta_{\frac{\eps_n}{r_n}}(y-z) |w_n(y) - w_n(z)|^p \, \dd y \, \dd z \,.
\end{align}
Now,
\begin{align}\label{eq:finalest3}
\frac{r_n\rho(\bar{x})}{\eps_n} \int_{tQ} \int_{tQ} \eta_{\eps'_n}(y-z) |w_n(y) - w_n(z)|^p \, \dd y \, \dd z
    & \leq G_{\eps'_n}^{(p)}(w_n,\rho(\bar{x}),T_C) \\
& = \l\frac{\eps_n}{r_n}\r^{d-1} G^{(p)}\left(R_{0,\eps'_n}w_n,\rho(\bar{x}),\frac{r_n}{\eps_n}T_C\right)\,. \nonumber
\end{align}
Moreover, using Proposition \ref{prop:nonlocdef} we get that for a.e. $t\in(0,1)$ it holds
\begin{equation}\label{eq:finalest4}
\Lambda_{\eps'_n}(w_n,tQ,tT_C\setminus tQ)\to0\,,\quad\quad\quad \Lambda_{\eps'_n}(w_n,tT_C,\bbR^d\setminus tT_C)\to0
\end{equation}
as $n\to\infty$. Finally, using Remark \ref{rem:nonlocldef} and the fact that $w_n$ is constant on $T_C\setminus Q$ it is easy to see that
\begin{equation}\label{eq:finalest5}
\lim_{t\to1}\lim_{n\to\infty}\Lambda_{\eps'_n}(w_n,tT_C\setminus tQ,tT_C\setminus tQ)=0\,.
\end{equation}
Hence, from \eqref{eq:deltantozero}, \eqref{eq:finalest1}, \eqref{eq:finalest2}, \eqref{eq:finalest3}, \eqref{eq:finalest4} and \eqref{eq:finalest5} and recalling that $s'_n\to1$ we get
\[
\lim_{n\to\infty}\frac{\lambda_{\eps_n}(\bar{x}+r_nQ)}{r_n^{d-1}} \geq \rho(\bar{x})\sigma^{(p)}(\bar{x},\nu(\bar{x}))
\]
as required
\end{proof}


\subsection{The Limsup Inequality \label{subsec:ConvNLContinuum:Limsup}}

This section is devoted at proving the following: let $u\in BV(X,\{\pm1\})$, then it is possible to find $\{u_{\eps_n}\}_{n=1}^\infty\subset L^1(X)$ with $u_{\eps_n}\to u$ in $L^1(X,\mu)$ such that
\begin{equation}\label{eq:limsuptoprove}
\limsup_{n\rightarrow\infty}\cF^{(p)}_{\eps_n}(u_{\eps_n})\leq\cG_\infty^{(p)}(u)\,.
\end{equation}
Without loss of generality, we can assume $\cG_\infty^{(p)}(u)<\infty$, namely $u\in BV(X;\{\pm1\})$.
The proof will follow the lines of the argument used to prove \cite[Theorem 5.2]{alberti98}. 

\begin{proof}[Proof of Theorem~\ref{thm:ConvNLContinuum:ConvNLContinuum} (limsup).]
We first prove the result for polyhedral functions then, via a diagonalisation argument, generalise to arbitrary functions in $BV(X;\{\pm1\})$.
We fix the sequence $\eps_n\to 0^+$ now.
\vspace{0.5\baselineskip}

\emph{Step 1. Polyhedral functions.} Assume $u\in BV(X,\{\pm1\})$ is a polyhedral function (see Definition \ref{def:polyfun}).
Then we claim that there exists a sequence $\{u_{\eps_n}\}_{n=1}^\infty$ with $|u_{\eps_n}|\leq1$, converging uniformly to $u$ on every compact set $K\subset X\setminus \partial^*\{u=1\}$, and in particular $u_{\eps_n}\rightarrow u$ in $L^1(X,\mu)$, such that \eqref{eq:limsuptoprove} holds.

Let us denote by $E$ the polyhedral set $\{u=1\}$ and by $E_1,\dots,E_k$ its faces.
It is possible to cover $\partial E\cap X$ with a finite family of sets $\bar{A}_1,\dots,\bar{A}_k$, where each $A_i$ is an open set satisfying the following properties:
\begin{itemize}
\item[(i)] $\partial A_i$ can be written as the union of two Lipschitz graphs over the face $E_i$,
\item[(ii)] every point in the relative interior of $E_i$ belongs to $A_i$,
\item[(iii)] $\cH^{d-1}\left(\bar{A}_i\cap \bigcup_{j\neq i}E_j\right)=0$,
\item[(iv)] $A_i\cap A_j=\emptyset$ if $i\neq j$.
\end{itemize}
Set
\[
A_0:=\{u=1\}\setminus\bigcup_{i=1}^k\bar{A}_i\,,\quad\quad\quad A_{k+1}:=X\setminus\bigcup_{i=0}^k\bar{A_i}\,.
\]

\begin{figure}
\centering
\includegraphics[scale=1]{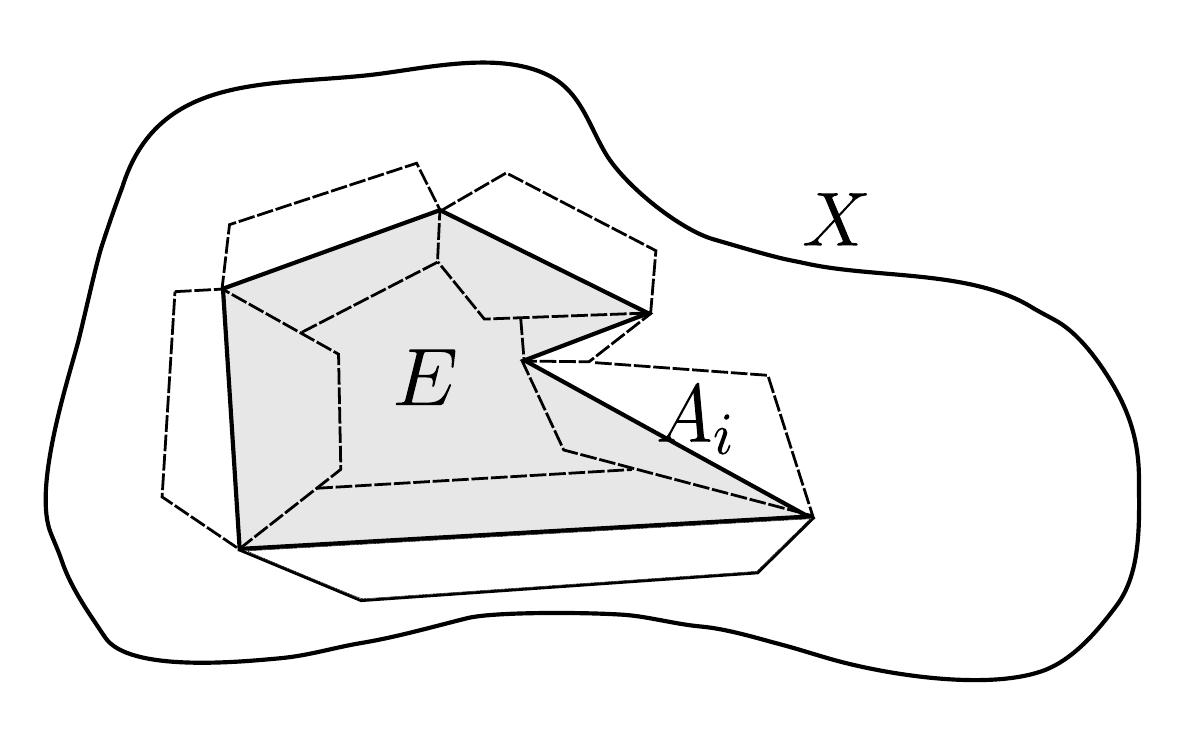}
\caption{The polyhedral set $E$ (shaded) and the sets $A_i$ (dotted lines).}
\end{figure}

We then define $u_{\eps_n}$ in each $A_i$ separately.
Set $u_{\eps_n}(x):=1$ for $x\in A_0$ and $u_{\eps_n}(x):=-1$ for $x\in A_{k+1}$.
Now fix $i\in\{1,\dots,k\}$ and $n\in\bbN$.
In order to define $u_{\eps_n}$ in $A_i$, we reason as follows.
Denote by $\nu$ the normal of the hyperplane containing $E_i$.
Without loss of generality, we can assume $\nu=e_d$ and $E_i\subset\{x_d=0\}$.
A point $x\in\mathbb{R}^d$ will be denoted as
\[
x=(x',x_d)\,,\quad\quad x'\in\mathbb{R}^{d-1}\,,\quad x_d\in\mathbb{R}\,.
\]
Fix $\xi>0$.
Using the continuity of $\sigma$ (see Lemma \ref{lem:ConNLContinuum:Liminf:contsigma}) and of $\rho$ in $X$ (see (A1)) it is possible to find a finite family of $d-1$ dimensional disjoint cubes $\{Q_j\}_{j=1}^{M_\xi}$, for some $M_\xi\in\bbN$, of side $r_\xi>0$ lying in the hyperplane containing $E_i$, having $E_i\subseteq \cup_{j=1}^{M_\xi} Q_j$, $E_i\cap Q_j \neq \emptyset$, and satisfying the following properties: denoting by $\{x_j\}_{j=1}^{M_\xi}$ their centers (or, in the case the center of a cube $Q_j$ is not contained in $E_i$, a point of $Q_j\cap E_i$) we have
\begin{equation}\label{eq:cond1}
\left|\, \int_{E_i} \sigma^{(p)}(x,\nu)\rho(x)\,\dd\cH^{d-1}(x) -
    r^{d-1}_\xi \sum_{j=1}^{M_\xi}  \sigma^{(p)}(x_j,\nu)\rho(x_j) \,\right|<\xi\,.
\end{equation}
It is possible to find, for every $j=1,\dots M_\xi$, $C_j\in\cC(x_j,\nu)$ and $w_j\in \cU(C_j,\nu)$ such that
\begin{equation}\label{eq:condu0}
\frac{1}{\cH^{d-1}(C_j)}G^{(p)}(w_j,\rho(x_j),T_{C_j})<\sigma^{(p)}(x_j,\nu)+\frac{\xi}{\rho(x_j)M_\xi r^{d-1}_\xi}\,.
\end{equation}
We can assume $|w_j|\leq 1$.
For every $j=1,\dots,M_\xi$, let $L_j\in\bbN$ be such that
\[ \frac{1}{\cH^{d-1}(C_j)}\left|\, G^{(p)}(w_j,\rho(x_j),T_{C_j})-G^{(p)}(\widetilde{w}_j,\rho(x_j),T_{C_j})  \,\right|<\frac{\xi}{\rho(x_j)M_\xi r^{d-1}_\xi}\,, \]
where
\[ \widetilde{w}_j(x):=
\left\{
\begin{array}{ll}
w_j(x) & \text{if }  |x_d|<L_j \,,\\
+1 & \text{if } x_d>L_j\,,\\
-1 & \text{if } x_d<-L_j\,.
\end{array}
\right. \]
By mollifying, $\widetilde{w}_j$, i.e. $\widetilde{w}_j^\prime = J_\beta\ast \widetilde{w}_j$, we have that $\widetilde{w}_j^\prime$ is Lipschitz continuous with $\Lip(\widetilde{w}_j^\prime) \leq \frac{1}{\beta}$ and $\widetilde{w}_j^\prime(x)\in \{\pm 1\}$ for $|x_d|>L_j+\beta$.
We can choose $\beta=\beta_\xi$ such that
\begin{equation}\label{eq:condun}
\frac{1}{\cH^{d-1}(C_j)}\left|\, G^{(p)}(w_j,\rho(x_j),T_{C_j})-G^{(p)}(\widetilde{w}_j^\prime,\rho(x_j),T_{C_j})  \,\right|<\frac{2\xi}{\rho(x_j)M_\xi r^{d-1}_\xi}\,.
\end{equation}

For every $j=1,\dots,M_\xi$ cover $Q_j$ with copies of $\eps_n C_j$.
Denote them by $\{Q_{j,s}^{(n)}\}_{s=1}^{k^{(n)}_j}$ and by $\{y^{(n)}_{j,s}\}_{s=1}^{k^{(n)}_j}$ their centers.
Notice that it might be necessary to consider the intersection of some cubes with $Q_j$, and that
\begin{equation}\label{eq:ratek}
k^{(n)}_j=\Bigg\lceil \frac{1}{\eps_n^{d-1}}r^{d-1}_\xi\left(\cH^{d-1}(C_j)\right)^{-1} \Bigg\rceil\,,
\end{equation}
where $\lceil x \rceil$ denotes the smallest natural number greater than or equal to $x\in\bbR$.
Define the function $v_j^{(n)}:\bbR^d\to[-1,+1]$ as the periodic extension of
\[ v^{(n)}_j(x):=\sum_{s=1}^{k_j^{(n)}}\widetilde{w}^\prime_j\left(\frac{x-(x_j+y^{(n)}_j)}{\eps_n}\right)\chi_{Q^{(n)}_{j,s}}(x')\,. \]
We are now in position to define the function $u_{\eps_n}$ in $A_i$: for $x\in A_i$ define
\[
u_{\eps_n}(x):=\sum_{j=1}^{M_\xi} \chi_{Q^{(n)}_j}(x')v^{(n)}_j(x)\,.
\]
Note that $u_{\eps_n}$ has Lipschitz constant $\Lip(u_{\eps_n}) \leq \frac{1}{\beta_\xi \eps_n}$ and $u_{\eps_n}(x) = u(x)$ for any $x$ with $\dist(x,\partial^*\{u=1\}) \geq \max_j (L_j+\beta_\xi) \eps_n$.
Hence, $u_{\eps_n}\rightarrow u$ as $n\to\infty$ uniformly on compact sets $K\subset X\setminus \partial^*\{u=1\}$.
We now prove the validity of inequality \eqref{eq:limsuptoprove}.
We claim that:
\begin{itemize}
\item[(i)] $\widetilde{\Lambda}_{\eps_n}(u_{\eps_n},A_i,A_j)\to0$ as $n\to\infty$ for all $i\neq j$, where $\widetilde{\Lambda}$ is defined by~\eqref{eq:ConvNLContinuum:Prelim:Lambdatilde};
\item[(ii)] $\limsup_{n\to\infty}\cF^{(p)}_{\eps_n}(u_{\eps_n},A_i)\leq \cG^{(p)}_\infty(u,A_i)$ for all $i=0,\dots,k+1$.
\end{itemize}
If the above claims hold true, then we can conclude as follows: we have
\begin{align*}
\limsup_{n\to\infty}\cF^{(p)}_{\eps_n}(u_{\eps_n})&\leq \sum_{i=0}^{k+1} \limsup_{n\to\infty}\cF^{(p)}_{\eps_n}(u_{\eps_n},A_i)
    + 2\sum_{i<j=0}^{k+1} \limsup_{n\to\infty}\widetilde{\Lambda}_{\eps_n}(u_{\eps_n},A_i,A_j)\\
&\leq \sum_{i=0}^{k+1} \cG^{(p)}_\infty(u,A_i)\\
&= \cG^{(p)}_\infty(u)\,.
\end{align*}
We start by proving claim (ii). It is easy to see that it holds true for $i=0, k+1$.
Fix $i\in\{1,\dots,k\}$.
Noticing that
\[
A_i\subset \bigcup_{j=1}^{M}\bigcup_{s=1}^{k_j^{(n)}}T_{Q^{(n)}_{j,s}}\,,
\]
we get
\begin{align}\label{eq:firstpartconmput}
\cF^{(p)}_{\eps_n}(u_{\eps_n},A_i)&\leq \cF^{(p)}_{\eps_n}\left(u_{\eps_n},
    \bigcup_{j=1}^{M_\xi}\bigcup_{s=1}^{k_j^{(n)}} T_{Q^{(n)}_{j,s}}  \right) \nonumber\\
&\leq \sum_{j=1}^{M_\xi}\sum_{s=1}^{k_j^{(n)}}\Biggl[\,
    \frac{s_{\eps_n}}{\eps_n} \int_{T_{Q^{(n)}_{j,s}}}\int_{\bbR^d} \eta_{\eps_n}(x-z)
        |u_{\eps_n}(x)-u_{\eps_n}(z)|^p \rho(x)\rho(z) \, \dd x \, \dd z \nonumber\\
&\hspace{2.6cm} + \frac{1}{\eps_n} \int_{T_{Q^{(n)}_{j,s}}} V(u_{\eps_n}(x)) \rho(x) \, \dd x \,\Biggr]\,.
\end{align}
Here, for every $j$ and $s$ we are using the whole cube $Q^{(n)}_{j,s}$, not only its intersection with $Q_j$.

Note that,
\begin{align*}
\cA^{(n)}_{j,s} := & \lb (x,z) \in \bbR^d\times T_{Q^{(n)}_{j,s}} \, : \, \eta_{\eps_n}(x-z)|u_{\eps_n}(x)-u_{\eps_n}(z)|^p \neq 0 \rb \\
& \hspace{2cm} \subseteq B(x_j,\sqrt{2d} r_\xi) \times \l T_{Q^{(n)}_{j,s}} \cap \lb |z_d|\leq \eps_n(L_j+R_\eta) \rb \r
\end{align*}
for $\eps_n$ sufficiently small (compared to $r_\xi$).
Hence there exists $\delta_\xi>0$ such that for all $(x,z)\in \cA^{(n)}_{j,s}$ we have $|\rho(x)-\rho(x_j)|\leq \delta_\xi$ and $|\rho(z)-\rho(x_j)|\leq \delta_\xi$ where $\delta_\xi\to 0$ as $\xi\to 0^+$.
This implies
\begin{align*}
s_{\eps_n} \rho(x) \rho(z) & \leq |s_{\eps_n} - 1| \rho(x) \rho(z) + \rho(x) \rho(z) \\
 & \leq |s_{\eps_n}-1|c_2^2 + \rho(x_j) \rho(z) + \delta_\xi c_2 \\
 & \leq |s_{\eps_n}-1|c_2^2 + \rho^2(x_j) + 2\delta_\xi c_2\,.
\end{align*}
Hence,
\begin{align*}
& \frac{s_{\eps_n}}{\eps_n} \int_{T_{Q_{j,s}^{(n)}}} \int_{\bbR^d} \eta_{\eps_n}(x-z) |u_{\eps_n}(x) - u_{\eps_n}(z)|^p \rho(x) \rho(z) \, \dd x \, \dd z \\
& \hspace{2cm} \leq \l |s_{\eps_n}-1|c_2^2 + 2\delta_\xi c_2 + \rho^2(x_j) \r \frac{1}{\eps_n} \int_{T_{Q_{j,s}^{(n)}}} \int_{\bbR^d} \eta_{\eps_n}(x-z) |u_{\eps_n}(x) - u_{\eps_n}(z)|^p \, \dd x \, \dd z\,.
\end{align*}
Similarly,
\[ \frac{1}{\eps_n} \int_{T_{Q^{(n)}_{j,s}}} V(u_{\eps_n}(x)) \rho(x) \, \dd x \leq \l \rho(x_j) + \delta_\xi \r \frac{1}{\eps_n} \int_{T_{Q^{(n)}_{j,s}}} V(u_{\eps_n}(x)) \, \dd x\,.  \]

Let $\gamma_{\xi,n} = 1+\frac{2\delta_\xi}{c_2} + |s_{\eps_n}-1|$ and notice that
\[ \lim_{\xi\to 0} \lim_{n\to \infty} \gamma_{\xi,n} = 1 \]
(there is a subtle dependence that requires $\eps_n$ be small compared to $r_\xi$ which means it is necessary to first take the limits in this order).
Then
\begin{align*}
& \frac{s_{\eps_n}}{\eps_n} \int_{T_{Q_{j,s}^{(n)}}} \int_{\bbR^d} \eta_{\eps_n}(x-z) |u_{\eps_n}(x) - u_{\eps_n}(z)|^p \rho(x) \rho(z) \, \dd x \, \dd z + \frac{1}{\eps_n} \int_{T_{Q^{(n)}_{j,s}}} V(u_{\eps_n}(x)) \rho(x) \, \dd x \\
& \,\, \leq \gamma_{\xi,n} \rho(x_j) \l \frac{\rho(x_j)}{\eps_n} \int_{T_{Q_{j,s}^{(n)}}} \int_{\bbR^d} \eta_{\eps_n}(x-z) |u_{\eps_n}(x) - u_{\eps_n}(z)|^p \, \dd x \, \dd z + \frac{1}{\eps_n} \int_{T_{Q^{(n)}_{j,s}}} V(u_{\eps_n}(x)) \, \dd x \r \\
& \,\, \leq \gamma_{\xi,n} \rho(x_j) G_{\eps_n}^{(p)}(u_{\eps_n},\rho(x_j),T_{Q_{j,s}^{(n)}})\,.
\end{align*}
Hence, using \eqref{eq:cond1}, \eqref{eq:condu0}, \eqref{eq:condun}, \eqref{eq:ratek} and~\eqref{eq:firstpartconmput}, we get
\begin{align*}
\cF_{\eps_n}^{(p)}(u_{\eps_n},A_i) & \leq \gamma_{\xi,n} \sum_{j=1}^{M_\xi} \sum_{s=1}^{k_j^{(n)}} \rho(x_j) G_{\eps_n}^{(p)}(u_{\eps_n},\rho(x_j),T_{Q^{(n)}_{j,s}}) \\
 & = \gamma_{\xi,n} \sum_{j=1}^{M_\xi} \sum_{s=1}^{k^{(n)}_j} \rho(x_j) \eps_n^{d-1} G_1^{(p)}(\widetilde{w}_j^\prime,\rho(x_j),T_{C_j}) \\
 & = \gamma_{\xi,n} \sum_{j=1}^{M_\xi} \frac{r_\xi^{d-1} \rho(x_j)}{\cH^{d-1}(C_j)} G_1^{(p)}(\widetilde{w}_j^\prime,\rho(x_j),T_{C_j}) \\
 & \leq \gamma_{\xi,n} \l r_\xi^{d-1} \sum_{j=1}^{M_\xi} \rho(x_j) \sigma^{(p)}(x_j,\nu) + 3\xi \r \\
 & \leq \gamma_{\xi,n} \l \int_{E_i} \sigma^{(p)}(x,\nu) \rho(x) \, \dd \cH^{d-1}(x) + 4\xi \r\,.
\end{align*}
Taking limsup as $n\to \infty$ and $\xi\to 0$ implies (ii).
Note that $u_{\eps_n}$ is not the recovery sequence, rather each $u_{\eps_n}$ depended on $\xi$ (through $L_j$ and $\beta_\xi$).
Hence, if we make the $\xi$ dependence explicit, we showed $\limsup_{n\to \infty} \cF_{\eps_n}^{(p)}(u_{\eps_n}^{(\xi)},A_i) \leq \limsup_{n\to \infty} \gamma_{\xi,n} \cG_\infty^{(p)}(u,A_i)$.
By a diagonalisation argument we can find a sequence $\xi_n\to 0$ such that $\limsup_{n\to \infty} \cF_{\eps_n}^{(p)}(u_{\eps_n}^{(\xi_n)},A_i) \leq \cG_\infty^{(p)}(u,A_i)$.
Moreover if the sequence $\xi_n\to 0$ sufficiently slowly then we can infer $\max L_j+\beta_{\xi_n} \geq \frac{1}{\zeta_{\eps_n}}$ and $\beta_{\xi_n}\leq \zeta_{\eps_n}$ hence, $\Lip(u_{\eps_n}) \leq \frac{1}{\zeta_{\eps_n} \eps_n}$ and $u_{\eps_n}(x)=u(x)$ whenever $\dist(x,\partial^*\{u=1\}) \geq \frac{\eps_n}{\zeta_{\eps_n}}$ for any sequence $\zeta_{\eps_n}\to 0$.

 \vspace{0.5\baselineskip}

We are thus left to prove claim (i).
Analogously to Remark~\ref{rem:nonlocldef}, $\widetilde{\Lambda}_{\eps_n}(u_{\eps_n},A_i,A_j)\to0$ as $n\to\infty$ for all $i,j$ for which $\mathrm{d}(A_i,A_j)>0$. 
Let us now consider indexes $i\neq j$ for which
$\mathrm{d}(A_i,A_j)=0$. Write
\begin{align*} 
\widetilde{\Lambda}_{\eps_n}(u_{\eps_n},A_i,A_j)
&=\frac{1}{\eps_n}\int_{B(0,R_\eta)}\int_{A_{\eps_n h}}
    \eta(h)|u_{\eps_n}(z+\eps_n h)-u_{\eps_n}(z)|^p \rho(z+\eps_n h)\rho(z)\,\dd z\,\dd h\,,
\end{align*}
where $A_{\eps_n h}:=\{ z\in A_i \,:\, z+\eps_n h\in A_j \}$.
Since $A_i$ and $A_j$ are Lipschitz graphs over sets lying on two different hyperplanes, it holds that $\Vol(A_{\eps_n h}) = O(\eps_n^d)$.
Thus
\[
\widetilde{\Lambda}_{\eps_n}(u_{\eps_n},A_i,A_j)\to0\,,
\]
as $n\to\infty$. \vspace{\baselineskip}

\emph{Step 2. The general case.} 
Let $u\in BV(X;\{\pm1\})$.
Using Theorem~\ref{thm:denspoly} it is possible to find a sequence $\{v_n\}_{n=1}^\infty$ of polyhedral function such that $v_n\to u$ in $L^1$ (which, in turn, implies that $Dv_n\stackrel{{w}^*}{\rightharpoonup}Du$) and $|D v_n|(X)\to|Du|(X)$.
Using Step 1 and a diagonalisation argument we get that there exists a sequence $\{u_n\}_{n=1}^\infty$ with $u_n\to u$ in $L^1(X)$ such that
\[
\cF^{(p)}_{\eps_n}(u_n)\leq \cG^{(p)}_\infty(v_n)+\frac{1}{n}\,.
\]
Then, Theorem~\ref{thm:rese} together with Lemma~\ref{lem:ConNLContinuum:Liminf:contsigma} gives us that
\[
\limsup_{n\to\infty}\cG^{(p)}_\infty(v_n)=\cG^{(p)}_\infty(u)\,.
\]
This concludes the proof.
\end{proof}


\section{Convergence of the Graphical Model \label{sec:ConvGraph}}

In this Section we prove Theorem~\ref{thm:MainRes:Compact&Gamma}.
In particular, in Section~\ref{subsec:ConvGraph:Compact} we prove the compactness part of Theorem~\ref{thm:MainRes:Compact&Gamma} and in Sections~\ref{subsec:ConvGraph:Liminf}-\ref{subsec:ConvGraph:Limsup} we prove the $\Gamma$-convergence result.


\subsection{Compactness \label{subsec:ConvGraph:Compact}}

\begin{proof}[Proof of Theorem~\ref{thm:MainRes:Compact&Gamma} (Compactness).]
\emph{Step 1.} We first show that there exist $c_n,\alpha_n>0$ with $\alpha_n,c_n \to 1$ as $n\to\infty$, such that
\begin{equation}\label{eq:esteta1}
\eta\l\frac{T_n(x)-T_n(z)}{\eps_n}\r \geq c_n \eta\l\frac{\alpha_n(x-z)}{\eps_n}\r\,.
\end{equation}
Let $\delta_n := \frac{2\|T_n-\Id\|_{L^\infty}}{\eps_n}$.
By Assumption~(C4) we can find $\alpha_n,c_n$ such that, for all $a,b\in \bbR^d$ with $|a-b|\leq \delta_n$, we have
\begin{equation}\label{eq:ineq1}
\eta(a) \geq c_n \eta(\alpha_n b)\,.
\end{equation}
Since by assumption (A2) we have that $\delta_n\to 0$ then $\alpha_n,c_n$ can be chosen such that $\alpha_n\to 1,c_n\to 1$.
Now if we let $a:=\frac{T_n(x)-T_n(z)}{\eps_n}$ and $b:=\frac{x-z}{\eps_n}$ we have
\[ |a-b| = \frac{|T_n(x) - T_n(z) + z - x|}{\eps_n} \leq \frac{2\|T_n-\Id\|_{L^\infty}}{\eps_n} = \delta_n \]
and therefore, by \eqref{eq:ineq1}, we get $\eta\l\frac{T_n(x)-T_n(z)}{\eps_n}\r \geq c_n \eta\l\frac{\alpha_n (x-z)}{\eps_n}\r$ as required. \vspace{0.5\baselineskip}

\emph{Step 2.} Let $v_n := u_n\circ T_n$. Using Lemma \ref{lem:writeint} and \eqref{eq:esteta1} we have that
\begin{align}
\cG_n^{(p)}(u_n) & = \frac{1}{\eps_n^{d+1}} \int_X\int_X \eta\l\frac{T_n(x)-T_n(z)}{\eps_n}\r |v_n(x) - v_n(z)|^p \rho(x) \rho(z) \, \dd x \, \dd z \nonumber \\
&\hspace{0.6cm}+ \frac{1}{\eps_n} \int_X V(v_n(x)) \rho(x) \, \dd x \nonumber \\
& \geq \frac{c_n}{\eps_n^{d+1}} \int_X\int_X \eta\l \frac{\alpha_n(x-z)}{\eps_n}\r |v_n(x) - v_n(z)|^p \rho(x) \rho(z) \, \dd x \, \dd z \nonumber \\
&\hspace{0.6cm}+ \frac{1}{\eps_n} \int_X V(v_n(x)) \rho(x) \, \dd x \nonumber \\
 & = \frac{c_n}{\alpha_n^{d+1}(\eps_n^\prime)^{d+1}} \int_X\int_X\eta\l \frac{x-z}{\eps_n^\prime}\r |v_n(x) - v_n(z)|^p \rho(x) \rho(z) \, \dd x \, \dd z \nonumber \\
&\hspace{0.6cm}+ \frac{1}{\eps_n} \int_X V(v_n(x)) \rho(x) \, \dd x \nonumber \\
 & = \frac{1}{\alpha_n} \cF_{\eps_n^\prime}^{(p)}(v_n) \label{eq:ineq2}
\end{align}
where $\eps_n^\prime := \frac{\eps_n}{\alpha_n}$ and $\cF_\eps^{(p)}$ is defined by~\eqref{eq:ConvNLContinuum:Feps} with $s_n := \frac{c_n}{\alpha_n^d}$ (in~\eqref{eq:ConvNLContinuum:Feps} $s$ depended on $\eps$ not on $n$, since we have fixed the sequence $\eps_n$ then clearly we could write $n$ in terms of $\eps_n^\prime$, however this would make the notation cumbersome). \vspace{0.5\baselineskip}

\emph{Step 3.} Since $\lim_{n\to\infty}\alpha_n=1$, we can infer that $\cF_{\eps_n^\prime}^{(p)}(v_n)$ is bounded and hence by Theorem~\ref{thm:ConvNLContinuum:ConvNLContinuum} the sequence $\{v_n\}_{n=1}^\infty$ is relatively compact in $L^1$. Therefore the sequence $\{u_n\}_{n=1}^\infty$ is relatively compact in $TL^1$, with any limit $u$ satisfying $\cG_\infty^{(p)}(u)<\infty$.
\end{proof}


\subsection{The Liminf Inequality \label{subsec:ConvGraph:Liminf}}

\begin{proof}[Proof of Theorem~\ref{thm:MainRes:Compact&Gamma} (Liminf).]
For any $u\in L^1(\mu)$ and any $u_n\in L^1(\mu_n)$ with $u_n\to u$ in $TL^1$ we claim that
\[ \liminf_{n\to \infty} \cG_n^{(p)}(u_n) \geq \cG_\infty^{(p)}(u). \]
Indeed, taking the liminf on both sides of \eqref{eq:ineq2}
and using Theorem~\ref{thm:ConvNLContinuum:ConvNLContinuum} we have
\[ \liminf_{n\to \infty} \cG_n^{(p)}(u_n) \geq \liminf_{n\to \infty} \frac{1}{\alpha_n} \cF_{\eps_n^\prime}^{(p)}(v_n) \geq \cG_\infty^{(p)}(u) \]
since $\lim_{n\to \infty} \frac{1}{\alpha_n} = 1$.
\end{proof}


\subsection{The Limsup Inequality \label{subsec:ConvGraph:Limsup} }

\begin{proof}[Proof of Theorem~\ref{thm:MainRes:Compact&Gamma} (Limsup).]
The aim of this section is to prove the following: given $u\in L^1(X,\mu)$ it is possible to find
a sequence $\{u_n\}_{n=1}^\infty\subset L^1(X_n)$ with $u_n\to u$ in $TL^1(X)$ such that 
\[
\limsup_{n\to \infty} \cG_n^{(p)}(u_n) \leq \cG_\infty^{(p)}(u)\,.
\]
Without loss of generality we can assume $\cG_\infty^{(p)}(u)<+\infty$. In particular, $u\in BV(X;\{\pm1\})$.

We divide the proof in two cases: we first assume that $u$ is a polyhedral function and then
we extend the argument to any function $u$ with $\cG_\infty^{(p)}(u)<+\infty$ via a diagonalisation argument. \vspace{0.5\baselineskip}

\emph{Case 1.} Assume that $u$ is a polyhedral function (see Definition \ref{def:polyfun}).
Let $w_n\in L^1(X,\mu)$ be a recovery sequence for the $\Gamma$-convergence of $\cF_{\eps_n^\prime}^{(p)}$ to $\cG_\infty^{(p)}$ where $\eps_n^\prime = \frac{\eps_n}{\hat{\alpha}_n}$ and $\hat{\alpha}_n$ is a sequence we fix shortly (see step 2)
\emph{i.e.}, we assume $w_n\to u$ in $L^1(X,\mu)$ and $\lim_{n\to\infty} \cF_{\eps_n^\prime}^{(p)}(w_n) = \cG_\infty^{(p)}(u)$.
Let $u_n(x_i) = n\int_{T_n^{-1}(x_i)} w_n(x) \, \dd x$ where $T_n$ is any sequence of transport maps satisfying the conclusions of Theorem~\ref{thm:optrate}.
\vspace{0.5\baselineskip}

\emph{Step 1.} We show that $u_n\to u$ in $TL^1$.
Setting $v_n = u_n\circ T_n$, we are required to show $v_n\to u$ in $L^1$.
Let $\zeta_n\to 0$ with $\zeta_n\gg \sqrt{\frac{\|T_n-\Id\|_{L^\infty}}{\eps_n}}$. 
By Theorem~\ref{thm:ConvNLContinuum:ConvNLContinuum} we can assume that $w_n$ are Lipschitz continuous with $\Lip(w_n)\leq \frac{1}{\zeta_n\eps_n}$ and $w_n(x) = u(x)$ for all $x$ satisfying $\dist(x,\partial^*\{u=1\})>\frac{\eps_n}{\zeta_n}$.
Hence, $v_n(x) = w_n(x) = u(x)$, for $n$ sufficiently large, for all $x$ such that $\dist(x,\partial^*\{u=1\})>\frac{3\eps_n}{\zeta_n}$. 
Therefore,
\begin{align*}
\| v_n - w_n \|_{L^1(X)} & = \sum_{i=1}^n \int_{T_n^{-1}(x_i)} |v_n(x) - w_n(x) | \, \dd x \\
 & \leq n \sum_{i=1}^n \int_{T_n^{-1}(x_i)} \int_{T_n^{-1}(x_i)} |w_n(y) - w_n(x) | \, \dd y \, \dd x \\
 & \leq \frac{2 \|T_n-\Id\|_{L^\infty}}{\zeta_n \eps_n n} \#\lb i\,:\, \dist(x_i,\partial^*\{u=1\})\leq \frac{3\eps_n}{\zeta_n} \rb\,.
\end{align*}
Now,
\begin{align*}
\#\lb i\,:\, \dist(x_i,\partial^*\{u=1\})\leq \frac{3\eps_n}{\zeta_n} \rb & = n\mu\l \lb x\,:\, \dist(T_n(x),\partial^*\{u=1\})\leq \frac{3\eps_n}{\zeta_n} \rb\r \\
 & \leq nc_2 \Vol\l\lb x\,:\, \dist(x,\partial^*\{u=1\})\leq \frac{4\eps_n}{\zeta_n} \rb\r \\
 & = O\l\frac{n\eps_n}{\zeta_n}\r.
\end{align*}
Hence, 
\begin{equation} \label{eq:ConvGraph:Limsup:vnwn}
\|v_n-w_n\|_{L^1(X)} = o(\eps_n)\,.
\end{equation}
Moreover,
\[ \|v_n-u\|_{L^1(X)} \leq \|v_n-w_n\|_{L^1(X)} + \|w_n-u\|_{L^1(X)} \to 0\,, \]
so $u_n\to u$ in $TL^1$.\vspace{0.5\baselineskip}

\emph{Step 2.}  We show that there exists $\hat{\alpha}_n,\hat{c}_n\to 1$ such that
\begin{equation} \label{eq:ConvGraph:Limsup:EtaUB}
\eta\l \frac{T_n(x) - T_n(z)}{\eps_n} \r \leq \hat{c}_n \eta\l\frac{\hat{\alpha}_n (x-z)}{\eps_n}\r.
\end{equation}
To show the validity of \eqref{eq:ConvGraph:Limsup:EtaUB} we use the following subclaim: for all
$\hat{\delta}>0$ sufficiently small there exists $\hat{\alpha}_{\hat{\delta}},\hat{c}_{\hat{\delta}}>0$ such that $\hat{\alpha}_{\hat{\delta}}\to 1, \hat{c}_{\hat{\delta}}\to 1$, as $\hat{\delta}\to 0$, and, for any $\hat{a},\hat{b}\in\bbR^d$, it holds
\begin{equation}\label{eq:claim1}
|\hat{a}-\hat{b}|<\hat{\delta}\quad\Rightarrow\quad\hat{c}_{\hat{\delta}} \eta(\hat{\alpha}_{\hat{\delta}} \hat{b}) \geq \eta(\hat{a})\,.
\end{equation}
Then~\eqref{eq:ConvGraph:Limsup:EtaUB} can be obtained as follows: for any $n\in\bbN$ take
\[
\hat{a} := \frac{T_n(x) - T_n(z)}{\eps_n}\,,\quad\quad
\hat{b} := \frac{x-z}{\eps_n}\,,\quad\quad
\hat{\delta} := \frac{2\|T_n-\Id\|_{L^\infty}}{\eps_n}\,,
\]
and let $\hat{\alpha}_n,\hat{c}_n$ be the numbers given by the subclaim for which \eqref{eq:claim1} holds.
Note that $\hat{\alpha}_n$ is chosen independently from $w_n$ (since $w_n$ depends on $\hat{\alpha}_n$ there is therefore no circular argument).
Then, since $|\hat{a}-\hat{b}|\leq \delta_n$, we infer~\eqref{eq:ConvGraph:Limsup:EtaUB}.

To prove the subclaim, we let $\alpha_\delta,c_\delta$ be as in Assumption~(C4) and let $\hat{\delta} >0$.
Without loss of generality we assume that $\inf_{\gamma\in (0,1]}\alpha_\gamma\in (0,\infty)$.
We choose $\delta := \min\lb 1,\frac{\hat{\delta}}{\inf_{s\in (0,1]} \alpha_s}\rb$, trivially $\delta\to 0$ as $\hat{\delta}\to 0$.
We assume that $\frac{\hat{\delta}}{\inf_{\gamma\in (0,1]} \alpha_\gamma} \leq 1$.
Let $\hat{a},\hat{b}\in \bbR^d$ with $|\hat{a}-\hat{b}|<\hat{\delta}$, and define $a := \frac{\hat{a}}{\alpha_\delta}$ and $b := \frac{\hat{b}}{\alpha_\delta}$.
Since, $|a-b|\leq \frac{\hat{\delta}}{\alpha_\delta} \leq \frac{\hat{\delta}}{\inf_{\gamma\in (0,1]}\alpha_\gamma} = \delta$ then
\[
\eta(b) \geq c_\delta \eta(\alpha_\delta a) \quad \Rightarrow \quad \frac{1}{c_\delta} \eta\l \frac{\hat{b}}{\alpha_\delta} \r \geq \eta(\hat{a})\,.
\]
Let $\hat{c}_{\hat{\delta}} := \frac{1}{c_\delta}$, $\hat{\alpha}_{\hat{\delta}} := 1/\alpha_\delta$ then $\hat{\delta}\to 0$ implies $\delta\to 0$ which in turn implies $\alpha_\delta,c_\delta\to 1$ and therefore $\hat{\alpha}_{\hat{\delta}},\hat{c}_{\hat{\delta}}\to 1$.
This proves the claim. \vspace{0.5\baselineskip}

\emph{Step 3.} 
Using Lemma \ref{lem:writeint} and \eqref{eq:ConvGraph:Limsup:EtaUB} we get
\begin{align*}
\cG^{(p)}_n(u_n) & = \frac{1}{\eps_n} \int_X\int_X \eta_{\eps_n}(T_n(x) - T_n(z)) |v_n(x) - v_n(z)|^p \rho(x) \rho(z) \, \dd x \, \dd z + \frac{1}{\eps_n} \int_X V(v_n(x)) \rho(x) \, \dd x \\
 & \leq \frac{\hat{c}_n}{\hat{\alpha}_n^{d+1}\eps^\prime_n} \int_X\int_X \eta_{\eps^\prime_n}( x-z) |v_n(x) - v_n(z) |^p \rho(x) \rho(z) \, \dd x \, \dd z + \frac{1}{\eps_n} \int_X V(v_n(x)) \rho(x) \, \dd x \\
 & = \frac{\hat{c}_n}{\hat{\alpha}_n^{d+1}\eps^\prime_n} \int_X\int_X \eta_{\eps^\prime_n}( x-z) |w_n(x) - w_n(z) |^p \rho(x) \rho(z) \, \dd x \, \dd z + \frac{1}{\eps_n} \int_X V(w_n(x)) \rho(x) \, \dd x \\
 & \hspace{1cm} + a_n + b_n
\end{align*}
where we recall $\eps_n^\prime := \frac{\eps_n}{\hat{\alpha}_n}$ and
\begin{align*}
a_n & := \frac{\hat{c}_n}{\hat{\alpha}_n^{d+1} \eps_n^\prime} \int_X\int_X \eta_{\eps_n^\prime}(x-z)\l |v_n(x) - v_n(z)|^p - |w_n(x) - w_n(z)|^p \r \rho(x) \rho(z) \, \dd x \, \dd z\,, \\
b_n & := \frac{1}{\eps_n} \int_X \l V(v_n(x)) - V(w_n(x)) \r \rho(x) \, \dd x\,. 
\end{align*}
We recall the followings inequalities: $\forall \delta>0$ there exits $C_\delta>0$ such that for any $a,b\in \bbR^d$ we have
\begin{equation}\label{eq:inequality1}
|a|^p \leq (1+\delta)|b|^p + C_\delta |a-b|^p\,.
\end{equation}
Inequality \eqref{eq:inequality1} follows by noticing that, for every given $\delta>0$, the function $x\mapsto\frac{|x|^p-1-\delta}{|x-1|^p}$ is bounded in $[0,\infty)$, and then setting $x=\frac{|a|}{|b|}$.
Moreover, for all $p\geq1$ and all $a,b\in\bbR$, it holds
\begin{equation}\label{eq:inequality2}
|a+b|^p\leq 2^{p-1}\l |a|^p+|b|^p \r\,.
\end{equation}
Fix $\delta>0$. Using \eqref{eq:inequality1} and \eqref{eq:inequality2} we infer
\[
a_n \leq \frac{\hat{c}_n \delta}{\hat{\alpha}_n^{d+1} \eps_n^\prime} \int_X\int_X \eta_{\eps_n^\prime}(x-z) |w_n(x) - w_n(z)|^p \rho(x) \rho(z) \, \dd x \, \dd z + d_n
\]
where
\[
d_n := \frac{2^{p-1}\hat{c}_n C_\delta}{\hat{\alpha}_n^{d+1} \eps_n^\prime} \int_X\int_X \eta_{\eps_n^\prime}(x-z) \Big( |v_n(x) - w_n(x)|^p + |v_n(z) - w_n(z)|^p \Big) \rho(x) \rho(z) \, \dd x \, \dd z\,.
\]
We show $d_n \to 0$. We have that
\begin{align*}
d_n & \leq  \frac{2^{p} \hat{c}_nC_\delta c_2^2}{\hat{\alpha}_n^{d+1} \eps_n^\prime} \int_X |v_n(x) - w_n(x)|^p \, \dd x \int_{\bbR^d} \eta(x) \, \dd x \\
 & \leq \frac{2^{2p-1} \hat{c}_nC_\delta c_2^2}{\hat{\alpha}_n^{d+1} \eps_n^\prime} \| v_n-w_n\|_{L^1(X)} \int_{\bbR^d} \eta(x) \, \dd x \\
 & = O\l\frac{C_\delta \|v_n-w_n\|_{L^1(X)}}{\eps_n}\r\,,
\end{align*}
where we used the fact that $\hat{c}_n\to1$, $\hat{\alpha}_n\to 1$ as $n\to\infty$.
From Equation~\eqref{eq:ConvGraph:Limsup:vnwn} we get that
\begin{equation}\label{eq:bn}
d_n\to0\,,
\end{equation}
as $n\to\infty$.

Let $L_V$ be the Lipschitz constant for $V$ (as given by Assumption~(B4)).
Since,
\begin{align*}
b_n & \leq \frac{c_2}{\eps_n} \int_X \la V(v_n(x)) - V(w_n(x)) \ra \, \dd x \leq \frac{c_2 L_V}{\eps_n} \|v_n-w_n\|_{L^1(X)}
\end{align*}
then $b_n\to 0$ by~\eqref{eq:ConvGraph:Limsup:vnwn}.

Combining the above estimates we have,
\begin{equation}\label{eq:kn}
\begin{split}
\cG^{(p)}_n(u_n) & \leq \frac{\hat{c}_n(1+\delta)}{\hat{\alpha}_n^{d+1} \eps_n^\prime} \int_X \int_X \eta_{\eps_n^\prime}(x-z) |w_n(x) - w_n(z)|^p \rho(x) \rho(z) \, \dd x \, \dd z \\
 & \hspace{1cm} + \frac{1}{\eps_n} \int_X V(w_n(x)) \rho(x) \, \dd x + b_n + d_n\,.
\end{split}
\end{equation}
\vspace{0.5\baselineskip}

\emph{Step 4.} We now conclude as follows.
Using \eqref{eq:kn} we get
\[ \cG_n^{(p)}(u_n) \leq \frac{(1+\delta)}{\hat{\alpha}_n} \cF_{\eps_n^\prime}^{(p)}(w_n) + b_n + d_n \]
for $\cF_{\eps_n^\prime}^{(p)}$ defined as in~\eqref{eq:ConvNLContinuum:Feps} with $s_n := \frac{\hat{c}_n}{\hat{\alpha}_n^d}$.
Taking the limsup on both sides and using \eqref{eq:bn}, together with Theorem~\ref{thm:ConvNLContinuum:ConvNLContinuum} we have
\[ \limsup_{n\to \infty} \cG_n^{(p)}(u_n) \leq \limsup_{n\to \infty} (1+\delta) \cF_{\eps_n^\prime}(w_n) \leq (1+\delta) \cG_\infty^{(p)}(u). \]
Taking $\delta\to 0$ completes the proof for case 1.
\vspace{0.5\baselineskip}

\emph{Case 2.} To extend to arbitrary functions $u\in BV(X;\{\pm1\})$
we apply the following diagonalisation argument.
Using Theorem \ref{thm:denspoly} together with Lemma \ref{lem:ConNLContinuum:Liminf:contsigma} and Theorem \ref{thm:rese} we can find a sequence of polyhedral functions $\{u^{(m)}\}_{m=1}^\infty$ such that
\[
\|u-u^{(m)}\|_{L^1}\leq \frac{1}{m}\,,\quad\quad
\cG_\infty^{(p)}(u^{(m)}) \leq \cG_\infty^{(p)}(u) + \frac{1}{m} \,.
\]
Using the result of Case 1, for each $m\in \bbN$ we have
\[
\limsup_{n\to \infty} \cG_n^{(p)}(u_n^{(m)}) \leq \cG_\infty^{(p)}(u^{(m)})\,,
\]
where $u_n^{(m)}$ is the recovery sequence for $u^{(m)}$ in the $\Gamma$-convergence $\cG_\infty^{(p)} = \Glim_{n\to \infty} \cF_{\eps_n^\prime}^{(p)}$.
For each $m\in\bbN$ let $n_m\in\bbN$ be such that
\[
\cG_n^{(p)}(u_n^{(m)}) \leq \cG_\infty^{(p)}(u^{(m)}) + \frac{1}{m} \quad\text{ and }\quad
\| u_n^{(m)}\circ T_n - u^{(m)}\|_{L^1} \leq \frac{1}{m}
\]
for all $n\geq n_m$.
At the cost of increasing $n_m$ we assume that $n_{m+1}> n_m$ for all $m$.
Let $u_n:=u_n^{(m)}$ for $n\in [n_m,n_{m+1})$.
Then,
\[ \limsup_{n\to \infty} \cG_n^{(p)}(u_n) = \limsup_{m\to \infty} \sup_{n\in [n_m,n_{m+1})} \cG_n^{(p)}(u_n^{(m)}) \leq \limsup_{m\to \infty} \l \cG_\infty^{(p)}(u^{(m)}) + \frac{1}{m} \r \leq \cG_\infty^{(p)}(u)\,. \]
Similarly,
\[ \lim_{n\to \infty} \|u_n\circ T_n - u\|_{L^1} \leq \limsup_{m\to \infty} \sup_{n\in [n_m,n_{m+1})} \l \| u_n^{(m)}\circ T_n - u^{(m)} \|_{L^1} + \frac{1}{m} \r \leq \lim_{m\to \infty} \frac{2}{m} = 0 \]
therefore $u_n$ converges to $u$ in $TL^1$.
Hence $u_n$ is a recovery sequence for $u$.
This completes the proof.
\end{proof}


\section{Convergence of Minimizers with Data Fidelity \label{sec:Const}}

In this section we prove Corollary \ref{cor:MainRes:Constrained}.

\begin{proof}[Proof of Corollary~\ref{cor:MainRes:Constrained}.]
In view of Theorem \ref{thm:convmin} and Proposition \ref{prop:contconv2} it is enough to prove that, for any $u\in L^1(X,\mu)$ and any $\{u_n\}_{n=1}^\infty$ with $u_n\in L^1(X_n)$ such that $u_n\to u$ in $TL^1(X)$ it holds that
\[
\lim_{n\to \infty} \cK_n(u_n) = \cK_\infty(u)\,.
\]
We can restrict ourselves to sequences $u_n\to u$ satisfying
\begin{equation}\label{eq:hypGn}
\sup_{n\in \bbN} \cG_n^{(p)}(u_n)< +\infty\,.
\end{equation}
Let
\[
v_n(x) :=
\left\{
\begin{array}{ll}
u_n(x) & \text{if } x\in X_n(u_n)\,,\\
1 &\text{otherwise}\,.
\end{array}
\right.
\]
where
\[
X_n(u_n) := \lb x\in X_n \, : \, |u_n(x)|^q\leq R_V \rb\,,
\]
and $R_V$ is as in Assumption~(B3). \vspace{0.5\baselineskip}

\emph{Step 1.} We claim that
\begin{equation}\label{eq:convDnunvn}
\lim_{n\to \infty} |\cK_n(u_n) - \cK_n(v_n) | \to 0\,,
\end{equation}
as $n\to\infty$.
Indeed we have that
\begin{align*}
\lim_{n\to \infty} \la \cK_n(u_n) - \cK_n(v_n) \ra & = \lim_{n\to \infty} \la \frac{1}{n} \sum_{x_i\not\in X_n(u_n)} \l k_n(x_i,u_n(x_i)) - k_n(x_i,1) \r \ra \\
 & = \beta \lim_{n\to \infty} \frac{1}{n} \sum_{x_i\not\in X_n(u_n)} \l 3+|u_n(x_i)|^q \r,
\end{align*}
where in the last equality we used (D2).
Now,
\[ \frac{1}{n} \sum_{x_i\not\in X_n(u_n)} 1 \leq \frac{1}{nR_V} \sum_{x_i\not\in X_n(u_n)} |u_n(x_i)|^q \leq \frac{1}{nR_V \tau} \sum_{x_i\not\in X_n(u_n)} V(u_n(x_i)) \leq \frac{\eps_n \cG_n^{(p)}(u_n)}{\tau R_V}. \]
Using \eqref{eq:hypGn} we conclude~\eqref{eq:convDnunvn}. \vspace{0.5\baselineskip}

\emph{Step 2.} We claim that $v_n\to u$ in $TL^1(X)$.
By a direct computation we get
\begin{align*}
\| u_n - v_n\|_{L^1(\mu_n)} & = \frac{1}{n} \sum_{x_i \not\in X_n(u_n)} |u_n(x_i) - 1| \\
 & \leq \frac{1}{n} \sum_{x_i\not\in X_n(u_n)} \l 1 + |u_n(x_i)| \r \\
 & \leq \frac{1}{n} \sum_{x_i\not\in X_n(u_n)} \l 1 + |u_n(x_i)|^q \r \to 0,
\end{align*}
so $v_n\to u$ in $TL^1$. \vspace{0.5\baselineskip}

\emph{Step 3.} We show that $\cK_n(v_n) \to \cK_\infty(u)$.
Consider any subsequence of $v_n$ which we do not relabel.
From Step 2 we have that $v_n\circ T_n\to u$ in $L^1$. Thus there exists a further subsequence (not relabeled) such that $v_n(T_n(x)) \to u(x)$ for almost every $x\in X$.
Using Assumption (D3) we get
\begin{equation}\label{eq:domconv1}
\lim_{n\to \infty} k_n\l T_n(x),v_n(T_n(x)) \r = k_\infty(x,u(x))\,.
\end{equation}
Moreover it holds
\begin{equation}\label{eq:domconv2}
k_n(T_n(x),v_n(T_n(x))) \leq \beta\l 1+|v_n(T_n(x))|^q \r \leq \beta\l 1 + R_V \r
\end{equation}
Using \eqref{eq:domconv1}, \eqref{eq:domconv2} and applying the Lebesgue's dominated convergence theorem we get
\begin{align}\label{eq:convDn}
\lim_{n\to \infty} \cK_n(v_n) & = \lim_{n\to \infty} \int_X k_n(T_n(x),v_n(T_n(x))) \, \rho(x) \, \dd x \nonumber \\
 & = \int_X k_\infty(x,u(x)) \rho(x) \, \dd x \nonumber\\
 & = \cK_\infty(u)
\end{align}
Since any subsequence of $v_n$ has a further subsequence such that $\lim_{n\to \infty} \cK_n(v_n) = \cK_\infty(u)$ then we can conclude the convergence is over the full sequence. \vspace{0.5\baselineskip}

\emph{Step 4.} Using \eqref{eq:convDnunvn} and \eqref{eq:convDn} we conclude that
\[
\lim_{n\to\infty}\cK_n(u_n)\to \cK_\infty(u)
\]
as required
\end{proof}


\section*{Acknowledgements}
The authors thank the Center for Nonlinear Analysis at Carnegie Mellon University and the Cantab Capital Institute for the Mathematics of Information at the University of Cambridge for their support during the preparation of the manuscript.
In addition the authors would like to thank Dejan Slep\v{c}ev and Nicol\'{a}s Garc\'{i}a Trillos for discussions and references that improved the manuscript.
The matlab implementation of the bean used in the example in Section~\ref{subsec:Intro:pEx} was written by Dejan Slep\v{c}ev.
The example in Section~\ref{subsec:Intro:AnisoEx} was aided by Luca Calatroni.
The research of R.C. was funded by National Science Foundation under Grant No. DMS-1411646.
Part of the research of M.T. was funded by the National Science Foundation under Grant No. CCT-1421502.
Both authors would like to thank the referees for valuable feedback that improved the paper.


\bibliographystyle{siam}
\bibliography{ref_GL2}

\end{document}